\numberwithin{equation}{section}
\theoremstyle{definition}
\newtheorem{definition}{Definition}[section]
\newtheorem{example}[definition]{Example}
\theoremstyle{plain}
\newtheorem{lemma}[definition]{Lemma}
\newtheorem{proposition}[definition]{Proposition}
\newtheorem{corollary}[definition]{Corollary}
\newtheorem{Conjecture}[definition]{Conjecture}
\newtheorem{theorem}[definition]{Theorem}
\theoremstyle{remark}
\newtheorem{remark}[definition]{Remark}
    \newlength\epitextskip
    \pretocmd{\@epitext}{\em}{}{}
    \apptocmd{\@epitext}{\em}{}{}
    \patchcmd{\epigraph}{\@epitext{#1}\\}{\@epitext{#1}\\[\epitextskip]}{}{}
\title{A Markov model for factorisation of iterated cubic polynomials}
\author{Javier San Martín Martínez\\ Bonn University, Faculty of Mathematics, Bonn, Germany \\s28jsanm@uni-bonn.de}
\date{March 2026}
\begin{document}

\maketitle
\begin{abstract}
	Motivated by the work of Boston and Jones in \cite{citation-key} and Goksel in \cite{article}, we propose a Markov model for the factorisation of \emph{post-critically finite} (\emph{PCF}) cubic polynomials $f$. Using the information encoded in the critical orbits, we define a Markov model for PCF cubic polynomials with \emph{combined critical orbits} of lengths one and two. Thanks to the work of Anderson et al. in \cite{cubic-PCF}, a complete list of PCF cubic polynomials over $\mathbb{Q}$ is available. Some of these polynomials have already been studied, such as those with colliding critical orbits analysed by Benedetto et al. in \cite{benedetto2024arborealgaloisgroupscubic}, which align with our model. We construct groups $M_n$ and prove that they follow our Markov model. These groups $M_n$ are conjectured to contain $\mathrm{Gal}(f^n)$.

\end{abstract}
\section*{Introduction}
Throughout this paper, let $K$ be a perfect field and $f \in K[x]$ a separable polynomial of degree $d,$ with all iterates separable. We consider iterates of $f$, denoting $f$ iterated with itself $n$ times as $f^n$. We define a regular $d$-ary tree $\mathcal{T}.$ The vertices correspond to the roots of $f^n$, and two vertices $\alpha$ and $\beta$ are connected if and only if $f(\alpha) = \beta$ or, symmetrically, $f(\beta) = \alpha$. This construction allows us to define an action of $\mathrm{Gal}(f^{\infty}):= \smash{{\varprojlim} \mathrm{Gal}(f^n) }\curvearrowright \mathcal{T}.$ Given $\sigma \in \mathrm{Gal}(f^{\infty})$, we obtain an element $\sigma' \in \mathrm{Aut}(\mathcal{T})$ using the labeling described above.

This way of looking at Galois groups has proved very useful, especially in studying primes appearing in sequences of the form $x_n = f(x_{n-1})$, where $f \in K[x]$. This was Odoni's initial motivation for defining arboreal representations in \cite{https://doi.org/10.1112/plms/s3-51.3.385}, \cite{Odoni1985OnTP}, \cite{https://doi.org/10.1112/S002557930000632X}. 

Galois groups arising in this way are generally difficult to describe. Therefore, we restrict our study to post-critically finite polynomials. A post-critically finite polynomial (PCF) $f$ satisfies that $\{f^n(\alpha)\}_{n=1}^{\infty}$ is a finite set for all critical points $\alpha$. Among its many properties, $\mathrm{Gal}(f^{\infty})$ is known to have infinite index in $\mathrm{Aut}(\mathcal{T})$. The Galois groups arising from these polynomials have been studied in some cases: polynomials of degree 2 \cite{AHMAD2022842}; degree 3 \cite{Benedetto2017}; some families of arbitrary degrees, such as Belyi maps \cite{Bouw2018DynamicalBM}; quadratic rational maps \cite{Ejder2024}; and maps with colliding critical orbits \cite{benedetto2024arborealgaloisgroupscubic}, \cite{Benedetto2024}.

Recently, Goksel proposed an indirect approach to obtaining the Galois group for a large family of polynomials. Goksel’s idea was to use the Chebotarev density theorem in the "opposite'' direction; namely, he wanted to construct groups that witness the frequencies of the degree of the irreducible factors of our polynomial mod $p$ for different primes. The factorisation of the iterates of certain quadratic polynomials was studied by Boston-Jones in \cite{goksel2013refined}, \cite{citation-key} using a Markov model. These groups are conjectured to contain the Galois groups $\mathrm{Gal}(f^n).$ Goksel made some progress in proving his conjectures in \cite{goksel2023localglobal}.

Motivated by this approach, we introduce a Markov model for the factorisation of cubic polynomials and use it as a framework for constructing groups that follow the model. The main results of the paper can be summarised as follows:
\begin{itemize}
    \item Defining a Markov model for factorisation of iterates of $PCF$ cubic polynomials, $f^n.$
    \item Construct groups following the model for every $n,$ in such a way that we define a subgroup of the automorphism of the $3-$ary rooted tree.
    \item Check that our groups coincide with the Galois groups in the known example of Bely maps. 
\end{itemize}
These results culminate in the formulation of our main conjecture, which remains open and will be the subject of future work.
\begin{Conjecture} 
    Let $M$ be the group arising from our factorisation model for a PCF cubic polynomial $f,$ then $M$ contains the Galois group associated with $f.$
\end{Conjecture}
We outline how this conjecture may be addressed using purely group-theoretical methods. For further discussion, see Section \ref{5}.

We use the following definition to classify PCF cubic polynomials.
\begin{definition}
    Given a PCF cubic polynomial $f\in K[x],$ and its critical points $\gamma_1,\gamma_2$ (we allow $\gamma_1=\gamma_2$). We define the \emph{combined critical orbit} as $\{\{f(\gamma_1),f(\gamma_2)\},...,\{f^n(\gamma_1),f^n(\gamma_2)\}\}$ where 
$n$ is the smallest positive integer such that
 $\{f^n(\gamma_1),f^n(\gamma_2)\}\neq \{f^l(\gamma_1),f^l(\gamma_2)\},\forall$ $l < n.$  Notice that these tuples are sets; therefore, the order does not matter.
\end{definition}

In particular, we study the PCF polynomials over $\mathbb{Q}$ given by Anderson et al. in \cite{cubic-PCF}. Their list contains polynomials with combined critical orbits of lengths 1 and 2. We construct two groups for each possible length.

We recall some background in Section \ref{1}. In Subsection \ref{1.1}, we provide the basic definitions of automorphisms of the regular rooted tree. In Subsection \ref{1.2}, we explain how to obtain, given $f \in K[x]$, the Galois group $\mathrm{Gal}(f^n)$ for any $n \in \mathbb{N}$ using the factorisation of $f^n$ over $\mathcal{O}_K / \mathfrak{p}$, where $\mathfrak{p}$ is a prime ideal of $\mathcal{O}_K$. Subsection \ref{1.3} is a short recap of the basic definitions of discrete Markov chains.

In Section \ref{2}, we define a Markov model for the factorisation of iterations of cubic polynomials. In Sections \ref{3} and \ref{4}, we construct Markov groups to satisfy the Markov model for polynomials with combined critical orbits of lengths 1 and 2. We also compute the Hausdorff dimension of these groups in Subsections \ref{3.2} and \ref{4.2}. In Section \ref{5}, we finally provide some conjectures about the groups constructed in Sections \ref{3} and \ref{4}. More precisely, we conjecture a relation between the Markov groups and the Galois groups $\mathrm{Gal}(f^n)$.

 \section*{Acknowledgements}
This paper is the result of my bachelor's thesis conducted at Utrecht University under the supervision of Dr. Valentijn Karemaker. Hence, I would like to thank Valentijn for her guidance and support throughout this work. She has been essential in selecting the problem and the methods used. I also express my gratitude for her careful review of the text and for her invaluable corrections and stylistic suggestions. I would also like to thank the anonymous reviewer for the numerous suggestions made in a previous submission of this manuscript.
 \tableofcontents

 \section{Background}\label{1}
 \subsection{Background on tree automorphisms}\label{1.1}
Let $\mathcal{T}$ be the regular ternary rooted tree. Each vertex has three children and one parent, except for the root, which has three children.
We recall some standard notation and concepts from the theory of arboreal representations.
Let $X = {0,1,2}$ be the alphabet. Equipped with the operation of concatenation $\ast$ (which we will henceforth denote simply by juxtaposition, writing $uw$ instead of $u \ast w$), it forms a monoid $X^{\ast}$. We denote by $X^n$ the set of words of length $n$.
The vertices of the ternary rooted tree can be identified with elements of $X^{\ast}$, and we will use these identifications interchangeably.
Under this notation, $0$ corresponds to the left vertex at the first level, $1$ to the middle one, and $2$ to the right one.
If we have inductively identified the vertices at level $n-1$ with words of length $n-1$, we identify the left descendant of $w$ as $w0$, the middle one as $w1$, and the right one as $w2$.
As an example, we depict the first two levels of the tree below.

\[\begin{tikzcd}
	&& {} && \bullet \\
	& 0 &&& 1 &&& 2 \\
	00 & 01 & 02 & 10 & 11 & 12 & 20 & 21 & 22
	\arrow[from=1-5, to=2-2]
	\arrow[from=1-5, to=2-5]
	\arrow[from=1-5, to=2-8]
	\arrow[from=2-2, to=3-1]
	\arrow[from=2-2, to=3-2]
	\arrow[from=2-2, to=3-3]
	\arrow[from=2-5, to=3-4]
	\arrow[from=2-5, to=3-5]
	\arrow[from=2-5, to=3-6]
	\arrow[from=2-8, to=3-7]
	\arrow[from=2-8, to=3-8]
	\arrow[from=2-8, to=3-9]
\end{tikzcd}\]

Thus, any $\sigma \in \mathrm{Aut}(\mathcal{T})$ acts on a word $w \in X^n$, and we denote the result of this action as $(w)\sigma$.
 \begin{definition}
     Let $\sigma \in \mathrm{Aut}(\mathcal{T}),$ we define the $\emph{section}$ of $\sigma$ in a vertex $u\in X^{*}$ as the automorphism $\sigma_u$ appearing in the following formula:
     \begin{equation*}
         (uw)\sigma=(u)\sigma (w)\sigma_u.
     \end{equation*}
     This defines uniquely an automorphism $\sigma_u$ of $X^{*}.$
 \end{definition}
 This concept allows us to state the  isomorphism:
 \begin{equation} 
     \mathrm{Aut}(\mathcal{T})\cong \mathrm{Aut}(\mathcal{T})\times \mathrm{Aut}(\mathcal{T})\times \mathrm{Aut}(\mathcal{T})\rtimes S_3
      \end{equation}
    $$ \sigma \to (\sigma_0,\sigma_1,\sigma_2)\sigma_{|X},$$

 where the action is a permutation of the components $\mathrm{Aut}(\mathcal{T}).$ In general, we have:
 \begin{align}
     \mathrm{Aut}(\mathcal{T}_n)\cong \mathrm{Aut}(\mathcal{T}_{n-1})\wr \mathrm{Aut}(\mathcal{T}_1)\cong \mathrm{Aut}(\mathcal{T}_1)\wr \mathrm{Aut}(\mathcal{T}_{n-1}).
 \end{align}
\subsection{How to obtain Galois groups from factorisation of polynomials}\label{1.2}
Goksel’s idea, as described in [9] and adapted here, is to obtain conjecturally the Galois groups of the splitting fields of iterated polynomials $f \in \mathbb{Q}[x]$. By the Chebotarev density theorem, we know there is a correspondence between the frequencies of the degree of the irreducible factors of a polynomial $f$ mod $p$ and the frequencies of the cycle data of elements in $\mathrm{Gal}_{\mathbb{Q}}(f)$ as a subgroup of $S_d.$ 

Given any polynomial $f \in \mathbb{Q}[x]$, we consider the Galois group of its splitting field, denoted $\mathrm{Gal}_{\mathbb{Q}}(f) \subset S_d$, where $d$ is the degree of $f$ and the embedding is defined by the action of the Galois group on the set of roots. The cycle data of $\sigma \in \mathrm{Gal}_{\mathbb{Q}}(f),$ denoted by $\mathrm{c}(\sigma),$ is defined as the partition of $d,$ determined by $\sigma$ when viewed as an element of $S_d$.  

\begin{example}
    Let $\sigma = (1,2,3)(4) \in S_4$. The $\mathrm{c}(\sigma)=3,1$.
\end{example}  

Similarly, we define the cycle data of a polynomial $f \in K[x]$, denoted $\mathrm{c}(f)$, as the sequence of degrees of its irreducible factors in descending order.  

\begin{example}
    Let $f = (x^2 + 1)(x - 2) \in \mathbb{Q}[x]$. Then $\mathrm{c}(f)=2,1$.
\end{example}  

The Chebotarev density theorem states that for any possible partition $\lambda$ of $d$, the following quantities exist and are equal:  
\[
\lim_{n \to \infty} \frac{|\mathrm{Primes}_{\lambda} \leq n|}{|\mathrm{Primes} \leq n|} = \frac{|\{\sigma \in \mathrm{Gal}_{\mathbb{Q}}(f)|\mathrm{c}(\sigma)=\lambda\}|}{|\mathrm{Gal}_{\mathbb{Q}}(f)|},
\]

\begin{example}
    Consider the polynomial $f = x^3 + x + 1 \in \mathbb{Q}[x]$. Since $\Delta(f) = -31$ is not a square, $\mathrm{Gal}(f) = S_3$. Thus, we know:
    \begin{enumerate}
        \item For $\frac{1}{6}$ of the primes, $\mathrm{c}(f)=1,1,1$ in $\mathbb{F}_p.$
        \item For $\frac{1}{3}$ of the primes, $\mathrm{c}(f)=3$ in $\mathbb{F}_p.$
        \item For $\frac{1}{2}$ of the primes, $\mathrm{c}(f)=2,1$ in $\mathbb{F}_p.$
    \end{enumerate}
\end{example}  
We aim to provide a probabilistic model for the factorisation of $f^n$. Given a polynomial and a randomly selected finite field $\mathbb{F}_q$, our model predicts the probability of a specific factorisation occurring. Since we consider $\mathbb{F}_p$ for all primes $p$, the frequencies of these factorisations will converge after finitely many iterations to the probabilities predicted by the model (assuming independence among these fields). But since we construct the group associated with the $f$ by taking inverse limits in the finite case, convergence also holds.

Using this model, we predict the frequencies of the splitting behaviour of the iterates of $f$. We encode certain arithmetic invariants of $f$ into our model, namely the values of the combined critical orbit, and use them to recursively compute probabilities in a Markov model. Using this approach, we construct groups whose frequencies match those predicted by our factorisation model.  

These predictions may not hold for all polynomials, but our model accounts for all possible splitting behaviours. Consequently, the constructed groups contain all possible cycle data of the actual Galois group. Computational evidence suggests that these groups contain the true Galois groups associated with such polynomials. Moreover, as discussed in Section \ref{5}, the structure of automorphisms of rooted trees is conjectured to force this in general. (Even beyond the particular groups arising from our construction).

\subsection{Markov models}\label{1.3}
Since we want to state a probabilistic model for the factorisation of iterates of polynomials, we need to recall the basic definitions of Markov models. For more information about Markov models, see \cite{seneta06}.
\begin{definition}
    A $\emph{discrete Markov Process}$ or $\emph{Markov Chain}$ is a sequence of random variables $\{X_n\}_{n\in \mathbb{N}}$ with $\mathrm{Pr}(X_n=x_{n}|X_{n-1}=x_{n-1},..., X_0=x_0)=\mathrm{Pr}(X_n=x_{n}|X_{n-1}=x_{n-1})$ where the conditional probabilities are well defined, i.e., $\mathrm{Pr}(X_{n-1}=x_{n-1},..., X_0=x_0)>0.$ Furthermore, we say it is $\mathrm{time\hspace{1mm}homogeneous}$  if $\mathrm{Pr}(X_{n+1}=x|X_n=y)=\mathrm{Pr}(X_n=x|X_{n-1}=y).$
\end{definition}

 \section{Markov Model for cubic polynomials}\label{2}
In this section, we aim to introduce a Markov model that models the factorisation of iterates of cubic polynomials. We present a lemma that facilitates the study of the factorisation of the composition of $f$ with an irreducible polynomial $g$. This will enable us to predict the factorisation of $f^n$ from that of $f^{n-1}$.
 \begin{lemma}\label{Lemma 4.1}
    Let $K$ be a field and $f,g\in K[x]$ separable polynomials.
    Suppose $g$ is irreducible and let $\beta$ be any root of $g$ in an algebraic closure $\bar{K}$ of $K.$ Then the factorisation into irreducibles of $f-\beta=\prod f_i$ in $K(\beta),$ yields  $g\circ f=\prod f_i'$ with $deg(f_i')=deg(f_i)\cdot deg(g).$
\end{lemma}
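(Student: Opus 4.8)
The plan is to reduce the statement to a count of $\mathrm{Gal}(\bar{K}/K)$-orbits of roots together with a tower-of-fields degree computation. First I would pass to an algebraic closure and use that $K$ is perfect (so the irreducible $g$ is separable) to write $g(y)=c\prod_{j=1}^{m}(y-\beta_j)$ with $\beta_1=\beta$ and the $m=\deg g$ roots $\beta_j$ distinct; substituting $f$ gives $g\circ f=c\prod_{j=1}^{m}\bigl(f-\beta_j\bigr)$. The roots of $g\circ f$ are then exactly the $f$-preimages of the roots of $g$, and the guiding principle is that the irreducible factors of a polynomial over $K$ correspond to the orbits of $\mathrm{Gal}(\bar{K}/K)$ on its roots; the goal is to match these orbits with the factorization of $f-\beta$ over $K(\beta)$.

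The degree relation itself I would obtain from a tower of fields. Choosing a root $\alpha$ of an irreducible factor $f_i$ of $f-\beta$ over $K(\beta)$, the identity $f(\alpha)=\beta$ forces $K\subseteq K(\beta)\subseteq K(\alpha)$; since $f_i$ is the minimal polynomial of $\alpha$ over $K(\beta)$ we have $[K(\alpha):K(\beta)]=\deg f_i$, while $[K(\beta):K]=\deg g$ because $g$ is irreducible with root $\beta$. Multiplying, $[K(\alpha):K]=\deg f_i\cdot\deg g$, and this is precisely the degree of the minimal polynomial $f_i'$ of $\alpha$ over $K$, which is an irreducible factor of $g\circ f$.

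To assemble the whole factorization I would take coset representatives $\sigma_1,\dots,\sigma_m$ of $H=\mathrm{Gal}(\bar{K}/K(\beta))$ in $G=\mathrm{Gal}(\bar{K}/K)$ with $\sigma_j\beta=\beta_j$, and conjugate the $K(\beta)$-factorization $f-\beta=\prod_i f_i$ to $f-\beta_j=\prod_i\sigma_j(f_i)$. This yields $g\circ f=c\prod_i\prod_{j}\sigma_j(f_i)$; setting $f_i'=\prod_{j=1}^m\sigma_j(f_i)$, I would check that $f_i'\in K[x]$ (the group $G$ permutes the conjugates $\sigma_j(f_i)$, so $f_i'$ is $G$-invariant and, $K$ being perfect, has coefficients in $K$), that $\deg f_i'=m\deg f_i=\deg g\cdot\deg f_i$, and that $f_i'$ is irreducible since its degree equals $[K(\alpha):K]$ for a root $\alpha$ of $f_i$. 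The factorization $g\circ f=c\prod_i f_i'$ then holds with the same index multiset as $f-\beta=\prod_i f_i$.

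The step I expect to be the main obstacle is not the degree bookkeeping but the separability subtlety: $f-\beta$ need not be squarefree, and in fact it fails to be exactly when $\beta$ is a critical value of $f$ — the phenomenon at the heart of the PCF setting — so I cannot simply identify irreducible factors of $f-\beta$ with distinct roots. The way around this is to exploit that $K(\beta)$, as a finite extension of the perfect field $K$, is again perfect, so that each irreducible factor $f_i$ is itself separable; repeated factors of $f-\beta$ therefore carry honest multiplicities that pass unchanged to $g\circ f=\prod_i f_i'$. I would finish by noting that distinct factors $f_i$ give distinct irreducible $f_i'$: if roots $\alpha_1,\alpha_2$ of $f_{i_1},f_{i_2}$ were $K$-conjugate by some $\tau\in G$, then $\tau\beta=f(\tau\alpha_1)=f(\alpha_2)=\beta$ forces $\tau\in H$, so $\alpha_1,\alpha_2$ are already $K(\beta)$-conjugate and $f_{i_1}=f_{i_2}$.
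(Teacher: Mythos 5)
Your proof is correct, and it takes a noticeably different route from the paper's, even though both share the same skeleton (the tower $K \subseteq K(\beta) \subseteq K(\alpha)$ for the degree count, and conjugation of the factorization of $f-\beta$ to the factorizations of $f-\beta_j$). The paper defines $f_i'$ as the minimal polynomial of a root of $f_i$ over $K$, shows each $f_i' \mid g\circ f$, proves $f_i \neq f_j \Rightarrow f_i' \neq f_j'$ by a somewhat laborious orbit-counting argument (counting $n_i$ elements in each fiber $f^{-1}(\beta_j)$, for a total of $n_i n$), and then concludes from ``$\prod f_i' \mid g\circ f$, equal degrees, monic.'' You instead build the factorization directly: from $g\circ f = c\prod_j (f-\beta_j)$ and $f-\beta_j = \sigma_j(f-\beta)$ you get the identity $g\circ f = c\prod_i \prod_j \sigma_j(f_i)$, and then verify that each $f_i' := \prod_j \sigma_j(f_i)$ is $G$-invariant and irreducible (its degree matches $[K(\alpha):K]$, so it equals the minimal polynomial). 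Your distinctness argument is also cleaner: the stabilizer computation $\tau\beta = f(\tau\alpha_1) = f(\alpha_2) = \beta \Rightarrow \tau \in H$ replaces the paper's orbit count in two lines. The concrete payoff of your version is the multiplicity issue you flag: when $\beta$ is a critical value, $f-\beta$ is not squarefree, and the paper's final step (``divides, same degree, monic'') implicitly assumes the $f_i$ are pairwise distinct, whereas your double-product identity keeps multiplicities exact automatically, so your argument proves the lemma in the generality in which it is stated, not just in the separable situation where it is applied. One small caveat: your descent of $f_i'$ to $K[x]$ uses perfectness of $K$ (fixed field of $\mathrm{Gal}(\bar{K}/K)$ equals $K$), which the lemma's local statement omits but the paper assumes globally, and which the paper's own Galois-theoretic proof needs just as much.
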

\begin{proof}
We first notice that for a root $\beta_j \in \bar{K}$ of $g,$ the polynomial $f-\beta\in K(\beta_j)$ has roots corresponding to roots of $g\circ f.$ Given any root $\alpha_{ij}\in \bar{K}$ of $f-\beta_j$ consider its minimal polynomial $f_{ij}$ over $K(\beta_j)$ and $f_{ij}'$ over $K.$ All this is summarised in the following diagram. 
 \[
  \begin{tikzpicture}[node distance = 2cm, auto]
  \node (K) {$K$};

  \node (K1) [above of=K, right of=K] {$K(\beta_j)$};
  \node (K3) [above of=K, node distance = 4cm] {$K(\alpha_{ij})$};
  \draw[-] (K) to node [swap]{$n=\mathrm{deg}(g)$} (K1);
  \draw[-] (K) to node  {$ n_in=\mathrm{deg}(f_{ij}')$} (K3);
  \draw[-] (K1) to node [swap] {$n_i=\mathrm{deg}(f_{ij})$} (K3);
 
  \end{tikzpicture}
\]
We notice that $f_{ij}'$ have $\alpha_{ij}$ as a root but is defined over $K$ hence is obtained by applying the action of the $\mathrm{Gal}(g)$ to $f_{ij}.$ This shows that changing the root $\beta_j$ of $g$ does not affect the $f_{ij}'.$
As $f_{ij}'$ is the minimal polynomial of $\alpha_{ij}$, we have $f_{ij}'\mid g\circ f$ and have the desired degree, $\mathrm{deg}(f_{ij}')=\mathrm{deg}(f_{ij})\cdot \mathrm{deg}(g)$. 
It is enough to prove that all possible roots of $g\circ f$ are also roots of a $f_{ij}'.$ As remarked before any root of $f-\beta_k$ can be translated to a root of $\beta_j$ using the action of the $\mathrm{Gal}(g).$ Then we just need to show it for the fixed $j.$ For a fixed $j$ it reduces to realize that $f_{ij}|f_{ij}'.$

\end{proof}
We state now a corollary which is sufficient for our purposes.
\begin{corollary}\label{cor 2.2}
    Let $K$ be a finite field and $f,g\in K[x].$
    Suppose that $g$ is irreducible and let $\beta \in \bar{K} $ be any root of $g.$ Then the factorisation into irreducibles of $f-\beta=\prod f_i$ in $K(\beta),$ gives us $g\circ f=\prod f_i'$ with $deg(f_i')=deg(f_i)\cdot deg(g).$
\end{corollary}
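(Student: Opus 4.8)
The plan is to obtain Corollary \ref{cor 2.2} as an immediate specialization of Lemma \ref{Lemma 4.1}: the two statements have identical conclusions, and the only discrepancy in the hypotheses is that the lemma asks for $f$ and $g$ to be separable, a condition I will show is automatic (or forced by our standing assumptions) once $K$ is taken to be finite. So the entire task reduces to verifying that the separability hypotheses of the lemma are met, after which the conclusion $g\circ f=\prod f_i'$ with $\mathrm{deg}(f_i')=\mathrm{deg}(f_i)\cdot\mathrm{deg}(g)$ transfers verbatim.

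First I would record that a finite field is perfect: the Frobenius $x\mapsto x^p$ is an injective endomorphism of a finite set, hence surjective, so $K=K^p$. Over a perfect field every irreducible polynomial is separable---if an irreducible $g$ had a repeated root then $\gcd(g,g')\neq 1$ would force $g'=0$, so $g$ would be a polynomial in $x^p$, and perfection would let us write it as a $p$-th power, contradicting irreducibility. Since $g$ is assumed irreducible, this gives separability of $g$ for free, and in particular $g$ has $\mathrm{deg}(g)$ distinct roots, so all of the Galois-theoretic bookkeeping in the proof of Lemma \ref{Lemma 4.1} (the isomorphisms $K(\beta_i)\cong K(\beta_j)$, the extension of $\sigma_{i,j}$, and the order of $\mathrm{Gal}_K(g)$) goes through unchanged.

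For $f$ the separability is supplied by the standing hypothesis of the paper---$f$ is separable with all iterates separable---which is exactly the setting in which the corollary is applied (here $f$ is the PCF cubic under study). I would emphasize that this hypothesis is genuinely used: it guarantees that $f-\beta$ is squarefree over $K(\beta)$, so that the $f_i$ appearing in $f-\beta=\prod f_i$ are distinct irreducibles and the degree count $\sum\mathrm{deg}(f_i)=\mathrm{deg}(f)$ holds; without it $g\circ f$ could fail to be squarefree and the clean factorization would break. Granting separability of both $f$ and $g$, Lemma \ref{Lemma 4.1} applies directly and yields the corollary.

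The only ``obstacle'' is thus bookkeeping rather than mathematics: one must ensure the separability assumption on $f$ is in force in every application of the corollary. Since finite fields contribute perfection automatically and the PCF polynomials we treat are separable by assumption, there is no genuine additional content beyond Lemma \ref{Lemma 4.1}, which is precisely why this specialization suffices for our purposes.
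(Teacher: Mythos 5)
Your overall route is the same as the paper's: the paper offers no separate proof of Corollary~\ref{cor 2.2} at all, treating it as an immediate specialization of Lemma~\ref{Lemma 4.1}, and your verification that the separability hypotheses come for free over a finite field (perfectness of $\mathbb{F}_q$ via surjectivity of Frobenius, hence separability of the irreducible $g$; separability of $f$ from the paper's standing assumption) is exactly the bookkeeping that justifies this. Up to that point the proposal is fine and faithful to the intended argument.

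However, one of your supporting claims is false: separability of $f$ does \emph{not} guarantee that $f-\beta$ is squarefree over $K(\beta)$. Since $(f-\beta)'=f'$, squarefreeness of $f-\beta$ is equivalent to $\beta$ not being a \emph{critical value} of $f$, which is independent of $\gcd(f,f')=1$. Concretely, over $K=\mathbb{F}_5$ take $f=x^3+x$ (separable, since its critical points $\gamma$ with $\gamma^2=3$ satisfy $f(\gamma)=4\gamma\neq 0$) and $g=x^2-3$, which is irreducible with root $\beta=4\gamma$; then $f-\beta=(x-\gamma)^2(x-3\gamma)$ has a repeated factor even though all hypotheses of Lemma~\ref{Lemma 4.1} hold. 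So your parenthetical "this hypothesis is genuinely used: it guarantees that $f-\beta$ is squarefree" attributes the wrong role to the hypothesis, and if the lemma's proof really needed the $f_i$ distinct (its orbit argument does implicitly assume this), your justification would not supply that. The correct source of squarefreeness in every application the paper makes is the standing assumption that \emph{all iterates} of $f$ are separable: there $g$ divides a separable polynomial of the form $f^{n-1}-t$, so $g\circ f$ divides $f^n-t$ and is itself separable, which forces $f-\beta$ to be separable over $K(\beta)$ since its roots are among the roots of $g\circ f$. Replacing your squarefreeness sentence with this divisibility argument closes the gap; the rest of the specialization then goes through as you describe.
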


We can specialise our results in the case $f$ is a cubic polynomial. When $f$ is a cubic polynomial, factorisation over a finite field depends on the discriminant. Using the discriminant and Corollary $\ref{cor 2.2}$, we can prove the following refined result for the cubic polynomial case:
\begin{lemma}
    Let $K$ be a finite field of characteristic $p\neq 2,3$ with $p$ a prime. Let $f,g\in K[X]$ with $f=ax^3+bx^2+cx+d$ a cubic.
    Suppose that $g\circ f^{n-1}$ is irreducible for some $n\geq 1$. Take $\gamma_1,\gamma_2$ critical points of $f$.Then the following holds:
    
  \begin{enumerate}\label{Lemma 4.3}
        \item If $(-3)^{\mathrm{deg}(g)}g(f^{n}(\gamma_1))g(f^n(\gamma_2))$ is a square, then $g\circ f^n$ may be irreducible or factor into three polynomials of the same degree as $g\circ f^{n-1}.$
        \item If $(-3)^{\mathrm{deg}(g)}g(f^{n}(\gamma_1))g(f^n(\gamma_2))$ is not a square, then $g\circ f^n$ factors into one irreducible polynomial of the same degree as $g\circ f^{n-1}$ and another of double the degree of $g\circ f^{n-1}.$
    \end{enumerate}
    
\end{lemma}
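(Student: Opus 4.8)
The plan is to reduce the factorization of $g\circ f^n$ to that of the cubic $f-\beta$ over a suitable field and then read off the factorization type from a discriminant. Write $h=g\circ f^{n-1}$, which is irreducible by hypothesis; we may take $h$ (equivalently $g$) monic, since scaling alters $g(f^n(\gamma_1))g(f^n(\gamma_2))$ only by a square and does not affect factorization types. Set $d=\deg(h)=3^{n-1}\deg(g)$, let $q=|K|$, and let $\beta\in\bar K$ be a root of $h$, so $K(\beta)\cong\mathbb{F}_{q^d}$. Since $g\circ f^n=h\circ f$, Corollary \ref{cor 2.2} applies with $h$ playing the role of the irreducible polynomial: the factorization $f-\beta=\prod f_i$ over $K(\beta)$ yields $g\circ f^n=\prod f_i'$ with $\deg(f_i')=\deg(f_i)\cdot d$. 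As $f-\beta$ is a separable cubic (its product over conjugates of $\beta$ is $g\circ f^n$, which is separable), the only possibilities over $K(\beta)$ are irreducible, giving $g\circ f^n$ irreducible of degree $3d$; linear times quadratic, giving factors of degrees $d$ and $2d$; or three linear factors, giving three factors of degree $d$. Thus the statement reduces to deciding among these three cases.

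First I would invoke the classical description of cubic factorization over a finite field of characteristic $\neq 2,3$: a separable cubic is irreducible or splits completely precisely when its discriminant is a square, and factors as (linear)(quadratic) precisely when its discriminant is a non-square. This is because the Frobenius, viewed in the Galois group $\le S_3$, lies in $A_3$ exactly when the discriminant is a square, and the cycle types $(3)$ and $(1,1,1)$ are even while $(2,1)$ is odd. Applied to $f-\beta$ over $K(\beta)$, this reduces everything to deciding whether $\mathrm{disc}(f-\beta)$ is a square in $K(\beta)$.

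Next I would compute $\mathrm{disc}(f-\beta)$ in terms of the critical data. Using $\mathrm{disc}(f-\beta)=\pm a^{-1}\mathrm{Res}_x(f-\beta,f')$ and evaluating the resultant at the roots $\gamma_1,\gamma_2$ of $f'$, one obtains the standard expression $\mathrm{disc}(f-\beta)=-27a^2\,(f(\gamma_1)-\beta)(f(\gamma_2)-\beta)$; note that $(f(\gamma_1)-\beta)(f(\gamma_2)-\beta)$ is symmetric in the critical values and so lies in $K(\beta)$, even though $f(\gamma_1),f(\gamma_2)$ need not lie in $K$ individually. To transfer square-ness from $K(\beta)$ down to $K$ I would use that an element $x\in\mathbb{F}_{q^d}^{\ast}$ is a square if and only if $N_{K(\beta)/K}(x)$ is a square in $K$, which follows from the identity $N_{K(\beta)/K}(x)^{(q-1)/2}=x^{(q^d-1)/2}$. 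Taking norms, and using that the conjugates $\beta'$ of $\beta$ are exactly the roots of the monic $h$, so that $\prod_{\beta'}(f(\gamma_i)-\beta')=h(f(\gamma_i))=g(f^n(\gamma_i))$ for $i=1,2$, gives
\begin{equation*}
N_{K(\beta)/K}\big(\mathrm{disc}(f-\beta)\big)=(-27a^2)^d\,g(f^n(\gamma_1))\,g(f^n(\gamma_2)).
\end{equation*}

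Finally I would clean up the constant modulo squares. Since $a^{2d}$ and $3^{2d}$ are squares in $K$, one has $(-27a^2)^d\equiv(-3)^d\pmod{(K^{\ast})^2}$; and because $d=3^{n-1}\deg(g)$ with $3^{n-1}$ odd, $(-3)^d\equiv(-3)^{\deg(g)}\pmod{(K^{\ast})^2}$. Hence $\mathrm{disc}(f-\beta)$ is a square in $K(\beta)$ if and only if $(-3)^{\deg(g)}g(f^n(\gamma_1))g(f^n(\gamma_2))$ is a square in $K$; combined with the cubic dichotomy, a square forces $f-\beta$ to be irreducible or totally split (so $g\circ f^n$ is irreducible or a product of three degree-$d$ factors), while a non-square forces the $(1,2)$ split (so $g\circ f^n$ is a product of factors of degrees $d$ and $2d$). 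The main obstacle is the descent step: correctly transporting the square/non-square condition from $K(\beta)$ to $K$ via the norm criterion, identifying the norm with the product $g(f^n(\gamma_1))g(f^n(\gamma_2))$ even though only the symmetric combination of the critical values is defined over $K$, and carefully tracking the leading constant $(-3)^{\deg(g)}$ modulo squares, including the reduction of the exponent from $d$ down to $\deg(g)$ using the oddness of $3^{n-1}$.
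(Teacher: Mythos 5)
Your proposal is correct and follows essentially the same route as the paper's proof: reduce via Corollary \ref{cor 2.2} to the factorization of the cubic $f-\beta$ over $K(\beta)$, apply the discriminant dichotomy for cubics over finite fields, compute $\Delta(f-\beta)=-27a^2(f(\gamma_1)-\beta)(f(\gamma_2)-\beta)$ by resultants, and descend to $K$ via the norm. You are in fact somewhat more careful than the paper on two points it leaves implicit: that squareness transfers in both directions under the norm map (not merely that norms of squares are squares), and that the exponent drops from $3^{n-1}\deg(g)$ to $\deg(g)$ modulo squares because $3^{n-1}$ is odd.
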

\begin{proof}
We know by Lemma $\ref{Lemma 4.1}$ that the factorisation of $g\circ f^n$ depends entirely on the factorisation of $f-\beta$ for some root $\beta \in \bar{K}$ of $g\circ f^{n-1}.$ Moreover, we know that if $\Delta(f-\beta)$ is a square, then either $f-\beta$ splits or it is irreducible, and if $\Delta(f-\beta)$ is not a square, then it factors into a degree two irreducible polynomial and a degree one polynomial.

As we are working over a finite field, we can take $N_{K(\beta)/ K}(\Delta(f-\beta))$ and check if it is a square since $N$ sends squares to squares. Using resultants, we can get the following expression for the discriminant of $f-\beta:$
$$\Delta(f-\beta)=\frac{-1}{a}R(f-\beta,f')=\frac{-1}{a}(3a)^3(f(\gamma_1)-\beta)(f(\gamma_2)-\beta).$$

We can take the norm and obtain the following:
$$N_{K(\beta)/K}(\frac{-1}{a}(3a)^3(f(\gamma_1)-\beta)(f(\gamma_2)-\beta))=(-3a^2)^{\mathrm{deg}(g)}(g\circ f^{n-1}(\gamma_1))(g\circ f^{n-1}(\gamma_2)).$$
We notice that $a^2$ is always a square, and it is enough to study if $(-3)^{\mathrm{deg}(g)}g(f^{n}(\gamma_1))g(f^n(\gamma_2))$ is a square or not.
\end{proof}
\begin{remark}\label{remark 4.5}
  This allows us, as in \cite{article}, to assign specific types (see Definition \ref{def types}) to polynomials $g$. Note that, in the particular case where $f$ is a PCF cubic polynomial, we can determine whether $g(f^{n}(\gamma_1))g(f^{n}(\gamma_2))$ are squares or not, which requires only a finite number of computations. This type, together with the information on whether $(-3)^{\deg(g)}$ is a square, enables us to define a Markov process for the factorisation of iterates of PCF cubics. Similarly to Goksel, we will work over fields containing $\sqrt{-3}$, and thus will only consider $g(f^{n}(\gamma_1))g(f^{n}(\gamma_2))$
\end{remark}
\begin{remark}\label{remark 4.6}
  Heuristically, we can expect that if the discriminant of a polynomial $f$ is a square, then the probability that $f$ splits is $\frac{1}{3}$ for primes $p$ satisfying $p \gg 0$.
This is because the number of cubic polynomials that split over $\mathbb{F}_p$ is $\binom{p}{3} + 3(p-1)p + p$, whereas there are $\frac{p^2(p+1)}{2}$ polynomials with a square discriminant.
In the limit as $n \to \infty$, we deduce that for roughly $\frac{1}{3}$ of the primes less than $n$, the discriminant is a square in $\mathbb{F}_p$, and hence the polynomial splits.
\end{remark}

  We can define a type that represents a possible state of our probabilistic model, which retains the information about whether $g(f^{n}(\gamma_1))g(f^{n}(\gamma_2))$ is a square.
\begin{definition}\label{def types}
    Given a cubic PCF polynomial $f\in K[x]$ with combined critical orbit:
   $$\{\{f(\gamma_1),f(\gamma_2)\},...,\{f^m(\gamma_1),f^m(\gamma_2)\}\},$$ 
   and $g\in K[x]$ we give $g$ a $\emph{type},$ $t(g)$ which is a word of length $m$ in the alphabet $n,s.$ The letter in the $k$th position is $s$ if $g(f^k(\gamma_1))g(f^k(\gamma_2))$ is a square or $n$ otherwise.
\end{definition}

To define our model, we should know how to pass from one state (the factorisation of $f^n$) to the next (the factorisation of $f^{n+1}$). This transition can be understood as an action of $f$ on types, as defined below.
\begin{definition}\label{def 4.8}
    Given a PCF cubic polynomial $f\in K[x],$ we define the action of $f$ on the type $c_1,...,c_n,$ as $c_2,...,c_n,c_j$ where $j$ is the value smaller than $n$ for which $f^j(\gamma_1)f^j(\gamma_2)=f^{n+1}(\gamma_1)f^{n+1}(\gamma_2).$ Notice that applying $f$ to $g,$ i.e. $g\circ f$ would be like shifting the orbit one more time. 
\end{definition}
We want to study, given $g$ a polynomial, how to obtain $t(g\circ f)$ given $t(g)$.
It may be the case that given an irreducible polynomial $g,$ $g\circ f$ is irreducible, in which case we would have got the new state by the action in Definition \ref{def 4.8}. However, $g\circ f$  may be reducible. In case the composition is reducible, we remark that there is a dependence condition between the factors, for example, when $g\circ f=h_1h_2: $
\begin{equation}\label{equationtype}
    (g\circ f(f^j(\gamma_1)))(g\circ f(f^j(\gamma_2)))=(h_1(f^j(\gamma_1)))(h_1(f^j(\gamma_2)))(h_2(f^j(\gamma_1)))(h_2(f^j(\gamma_2))).
\end{equation}
We want to see equation $\ref{equationtype}$ over $\mathbb{F}_q;$ hence as we are studying if the elements are squares or not we can reduce the problem to $\mathbb{F}_q^{\times}/(\mathbb{F}_q^{\times})^2$ and then we obtain an equation in the types of $h_1,h_2.$ Notice that a similar equation occurs if we consider $g\circ f=h_1h_2h_3.$ These equations can be understood at the level of types using Legendre symbols:
\[
\left( \frac{a}{p} \right) =
\begin{cases}
  s & \text{if } a \text{ is a quadratic residue mod } p, \\
 n & \text{if } a \text{ is a non quadratic residue mod } p, \\
  0 & \text{if } a \equiv 0 \pmod{p}.
\end{cases}
\]
\begin{definition}
    Given two types of same length $a=a_1...a_j$ and $b=b_1...b_j$ we define $ab:=(a_1b_1)...(a_jb_j)$ where $a_ib_i$ is defining using legendre symbols as explained above.
\end{definition}

These equations typically admit multiple solutions. We call these possible solutions admissible types. In fact, both heuristic arguments and computational evidence suggest that all admissible states occur in practice.
A notable exception arises when the critical points collide; that is, when $f^l(\gamma_1) = f^l(\gamma_2)$. In this case, the $l$-th element of the type is always an $s$.
We do not address this situation here, as it has been studied in detail in \cite{benedetto2024arborealgaloisgroupscubic}.

We observe that computational data suggest that the model can be refined for certain polynomials.
For these polynomials, admissible types may never occur.
This phenomenon is not surprising, as similar behaviour appears in the quadratic case, as shown in \cite[Proposition 3.4]{citation-key} and \cite{goksel2013refined}.
The model agrees with computational data for certain families of PCF polynomials over $\mathbb{Q}(\sqrt{-3})$ with combined critical orbits of lengths one and two.
These are precisely the groups constructed in Sections \ref{3} and \ref{4}.

We can now state our model using Lemma $\ref{Lemma 4.3}$ and Remark $\ref{remark 4.6}.$

\begin{definition} \label{Definition 4.9}
 Let $f\in K(\sqrt{-3})[X]$ be a PCF cubic polynomial with $K$ a finite field. If the critical orbits of $\gamma_1,\gamma_2$ do not collide we define a $\emph{Markov model associated with the factorisation of}$ $f.$
 
We define our probability space as the set of all possible types with the counting measure. We define our probability space as the set of all possible types with the counting measure. We define the sample space $L$ to be the set of irreducible factors $h$ of $ f^n$.
    
    Moreover, we have random variables $X_n: L\to \{n,s\}^m,$ that to each $g\mid f^n$ gives you the type of that $g.$ We define a $\emph{Markov chain}$  giving the transition probabilities:
    
    Given a type $t$  starting with $n,$ we get all possible combinations of types with equal probabilities such that they multiply to $f(t)$ as defined in Definition \ref{def 4.8}. These are all ordered pairs $(d_1,d_2)$ such that $d_1\cdot d_2=f(t).$ 
    
    In the case of type $s,$ we get that $2/3$ of the cases of remain irreducible, and the type is given by the action in Definition \ref{def 4.8}; and $1/3$ all possible combinations with equal probability $(d_1,d_2,d_3)$ such that $d_1\cdot d_2 \cdot d_3=f(t),$ taking care of the order of descendants.
\end{definition}
We recall \cite[Definition 5.1]{article}. Note that we modified it so that we now deal with cubic polynomials.
\begin{definition}
    A level $n$ $\emph{type sequence}$ is a partition of $3^n$ into elements of the form $2^j3^m,$ $j,m\in \mathbb{N}$ together with its types. A level $n$ $\emph{datum}$ is a level $n$ type sequence together with a probability, i.e. a value $0 \leq p\leq 1$. A level $n$ $\emph{data}$ is a collection of level $N$ datum with the sum of associated probabilities equal to 1.
\end{definition}
We illustrate this definition in the following examples:
\begin{example}
    Take the cubic polynomial $-2(x+a)^3+3(x+a)^2-a\in \mathbb{F}_5.$ It has critical points $\gamma_1=-a,\gamma_2=1-a.$ Both points are fixed; therefore, the combined critical orbit would be $\{1-a,-a\}.$ We give a possible level $n$ datum ;  $[[s,3^n],1],$ by Definition \ref{Definition 4.9}, we get the descendent types $[[s,3^{n+1}],\frac{2}{3}],[[s,3^n]^3,\frac{1}{12}],[[s,3^n][n,3^n]^2,\frac{1}{4}],$ which would be a level $n+1$ datum.  

    Notice that the possible descendant types $(d_1,d_2,d_3)$ are $(s,s,s),(s,n,n),(n,s,n),$
    $(n,n,s)$ and therefore the combination $[s,1][n,1]^2$ appears $\frac{3}{4}$ of times, and $[s,1]^3$ just $\frac{1}{4}$ of times.

    Given the type $[[n,m],1]$ we get $[[n,2m][s,m],\frac{1}{2}][[s,2m][n,m],\frac{1}{2}].$
\end{example}

We give another slightly more complicated example:
\begin{example}\label{Example 4.13}
    Consider the PCF cubic  $-z^3+\frac{3}{2}z^2-1$ defined over $\mathbb{Q}.$ We can reduce the polynomial mod $7$ by considering $-3$ as the inverse of $2,$ so that we get the polynomial $-z^3-2z^2-1.$ We have the critical points $\gamma_1=0,\gamma_2=1$ and the orbits $0\to -1\to -2\to -1,1\to 3\to 3.$ 

    Then the combined critical orbit is $\{(-1,3),(-2,3)\}.$ We study the Markov model for this case. We iterate $[[ns,m],1]$ and we get:
    $$[[sn,2m][ss,m],\frac{1}{4}][[sn,m][ss,2m],\frac{1}{4}][[ns,2m][nn,m],\frac{1}{4}],[[nn,2m][ns,m],\frac{1}{4}].$$
    Given $[[nn,m],1],$ we obtain:
    $$[[nn,2m][ss,m],\frac{1}{4}],[[ss,2m][nn,m],\frac{1}{4}],[[sn,2m][ns,m],\frac{1}{4}],[[sn,m][ns,2m],\frac{1}{4}].$$
    Iterating $[ss,m]$ we get:
    \begin{align*}
\resizebox{\textwidth}{!}{$
[[ss,3m],\frac{2}{3}],[[ss,m]^3,\frac{1}{48}],[[ss,m][nn,m]^2,\frac{1}{16}],[[ss,m][ns,m]^2,\frac{1}{16}],[[ss,m][sn,m]^2,\frac{1}{16}],[[nn,m][ns,m][sn,m],\frac{1}{8}]
$}
\end{align*}
    Finally for the type $[[sn,m],1]$ we obtain:
    \[
\resizebox{\textwidth}{!}{$
[[ns,3m],\frac{2}{3}],[[ss,m]^2[ns,m],\frac{1}{16}],[[ss,m][sn,m][nn,m],\frac{1}{8}],[[ns,m]^3,\frac{1}{48}],[[ns,m][sn,m]^2,\frac{1}{16}],[[ns,m][nn,m]^2,\frac{1}{16}]
$}
\]
\end{example}

\section{Markov groups for combined critical orbit of length 1} \label{3}
\subsection{Constructing the group}
All cubic PCF polynomials over $\mathbb{Q}$, up to conjugation, were classified in \cite{cubic-PCF}. Those with non-colliding critical orbits are $-2z^3 + 3z^2$, $-z^3 + \frac{3}{2}z^2 + 1$, $-2z^3 + 3z^2 + 1$, $4z^3 - 6z^2 + \frac{3}{2}$, and $z^3 - \frac{3}{2}z^2$. All of these are indistinguishable in our model; indeed, given an initial type, the model behaves identically for all of them. We can even consider $f_a^n - t$ for different $t \in \mathbb{Q}(\sqrt{-3})$, as $f_a^n - t = (x - t) \circ f_a^n$ and hence can be interpreted within our model. Thus, we assume an appropriate choice of conjugation and $t$ to maximise the Galois group size.  

The Galois groups of the iterates of these polynomials are known for $-2z^3 + 3z^2$. This case was studied in \cite{Benedetto2017}, and more generally, all Belyi maps were analysed in \cite{Bouw2018DynamicalBM}. We will use an arbitrary $f \in \mathbb{Q}[x]$, a cubic PCF polynomial with a combined critical orbit of length $1$, and denote its critical points as $\gamma_1, \gamma_2$.  

We first introduce a group-theoretical construction that will allow us to define the Markov group associated with $f$. Using the fact that  
\[
\mathrm{Aut}(\mathcal{T}_n) \cong \mathrm{Aut}(\mathcal{T}_{n-1}) \times \mathrm{Aut}(\mathcal{T}_{n-1}) \times \mathrm{Aut}(\mathcal{T}_{n-1}) \rtimes S_3,
\]  
we can represent an element $\sigma \in \mathrm{Aut}(\mathcal{T}_n)$ by $(\sigma_1, \sigma_2, \sigma_3),\gamma$, where $\gamma \in S_3$ and $\sigma_i \in \mathrm{Aut}(\mathcal{T}_{n-1})$.  

Our goal is to construct groups that follow the Markov model, i.e., groups that exhibit the same splitting frequencies as those predicted by the model. We will describe a method to map group elements from the Markov group at level $n$ to level $n+1$, analogous to how the splitting behaviour of $f^n - t$ maps to that of $f^{n+1} - t$.  

Irreducible polynomials $g$ can produce splitting behaviours of $g \circ f$ into $2-1$, $3$, or $1-1-1$ irreducible polynomials. A disjoint cycle $\sigma \in \mathrm{Aut}(\mathcal{T}_n)$ should extend to disjoint cycles of type $1-1-1$, $3$, or $2-1$ in $\mathrm{Aut}(\mathcal{T}_{n+1})$ such that, when restricted to $\mathrm{Aut}(\mathcal{T}_n)$, it recovers $\sigma$.  

To achieve this, we first define a map that extends automorphisms from $\mathrm{Aut}(\mathcal{T}_n)$ to $\mathrm{Aut}(\mathcal{T}_{n+1})$. Since we want to preserve the automorphism at level $n$, we choose an element of $S_3$ for each $w \in X^n$.  

\begin{definition}\label{def 4.16}
    Let $\mathcal{T}$ be the $3-ary$ regular rooted tree. We know that the wreath product of the tree yields the following isomorphism $$S_3\wr_{\Omega} \mathrm{Aut}(\mathcal{T}_{n})\cong S_3^{3^{n}}\rtimes \mathrm{Aut}(\mathcal{T}_{n}),$$  
    where $\Omega$ is the set of vertices on level $n$ and the action $\mathrm{Aut}(\mathcal{T}_{n})\curvearrowright \Omega$ is the one given by looking at the permutation induced in level $n.$ We define the following family of maps indexed by an element of $S_3^{3^{n}}:$ 
    \begin{align*}
        i_{\{s_{i}\}}:\mathrm{Aut}(\mathcal{T}_{n})&\to S_3^{3^{n}}\rtimes \mathrm{Aut}(\mathcal{T}_{n})\cong \mathrm{Aut}(\mathcal{T}_{n+1}),\\
        \sigma &\to (\{s_i\},\sigma).
    \end{align*}
    The index $i$ denotes a vertex on level $n$.
    Then we can explicitly describe the action on the level $n+1$ as follows. Given a word $w=j w'\in X^n,$ we have $(w)i_{\{s_{i}\}}(\sigma)=(j)s_{w}(w)\sigma.$
\end{definition}
\begin{definition}\label{def 5.2}
    Given an automorphism  $\sigma\in \mathrm{Aut}(\mathcal{T}_{n}),$ we take the disjoint cycle decomposition of $\sigma=(a_{1,1},...,a_{1,n_1})(a_{2,1},...,a_{2,n_2})...(a_{k,1},...,a_{k,n_k}),$ in such a way that $a_{i,j}\leq a_{i,l}$ when $j\leq l.$ We define the $\emph{splitting}$ of $\sigma$ as $s(\sigma)=i_{\{id\}}(\sigma) \in \mathrm{Aut}(T_{n+1}).$  We define the $\emph{doubling}$ of $\sigma$ as $d(\sigma)=i_{\{s_i\}}(\sigma)$ where $s_i=(0,1)$ when $i= a_{l,n_l}$ for some $l$ and $s_i=id$ otherwise. Lastly we define the $\emph{tripling}$ of $\sigma$ as $t(\sigma)=i_{\{s_i\}}(\sigma)$ where $s_i=(0,1,2)$ when $i= a_{l,n_l}$ for some $l$ and $s_i=id$ otherwise.
\end{definition}
We illustrate our definition for some cases.
\begin{example}\label{Example 4.18}
    If we take $x_0=(0,1,2)\in \mathcal{T}_1$, then $x_n:=t^n(x_0)$ would be a $3^n$ cycle at level $n,$ and restricted to $\mathrm{Aut}(\mathcal
    T_k)$ a $3^k$ cycle for $k\leq n.$ Taking $\smash{\underset{n}{\varprojlim}} x_n=\sigma\in \mathrm{Aut}(\mathcal{T}),$ $\sigma=(id, id,\sigma),(0,1,2)$, which is the $3-$adic adding machine.
\end{example}
\begin{example}
    We can apply the same construction to the initial data $x_0=(0)(1)(2).$ In this case $x_n=t^n(x_0)=(y_{n-1},y_{n-1},y_{n-1}),$ where $y_{n-1}$ is $t^{n-1}((0,1,2)).$ Notice that $t(x_0)=((0,1,2),(0,1,2),(0,1,2)),$ then the construction is the same as in Example \ref{Example 4.18}.
\end{example}

We define the Markov model over the elements of the group:
\begin{definition}
    Given a group element $g=g_1...g_k$ with type $p_1,...,p_k$ such that each $p_i$ is associated with a disjoint cycle of $g,$ we define the iteration of $g$ as applying tripling if $p_i=s$ or doubling if $p_i=n.$
\end{definition}
With these definitions in place, we are ready to introduce the Markov group. Given $f-t\in \mathbb{Q}(\sqrt{-3})$ a cubic PCF polynomial of combined critical orbit of length one, we can obtain the maximal cycle data possible of the first level over $\mathbb{Q}(\sqrt{-3})$ which is:
$$[[s,3],1/3],[[s,1]^3,1/24],[[n,1]^2[s,1],1/8],[[n,2][s,1],1/4],[[s,2][n,1],1/4].$$
We use Chebotarev and get that our group has 6 elements, 2 of which have order 3 and 2 of which have order 2, so this group is $S_3.$ 

In analogy to Goksel's construction we can work over $\mathbb{Q}(\sqrt{-3})$ which by Definition $\ref{Definition 4.9}$ allows us to define the following dynamics on types:
\begin{align*}
    &[s,k]\to [[s,3k],2/3],[[s,k]^3,1/12],[[s,k][n,k]^2,1/4],\\
    &[n,k]\to [[n,2k][s,k],1/2],[[s,2k][n,k],1/2].
\end{align*}
We can restrict the Markov model to define generators for our group recursively,
\begin{align*}
    &[s,k] \to [s,3k],\\
    &[n,k]\to [n,2k]^2[s,k].
\end{align*}

We denote our group $M$ and define $M_1=S_3$ with generator $(0,1,2),$ $(0,1)$ and we attach this generator the type $[s,3],$ to $(0,1)$ we display $[n,2][s,1]$ and to the identity the type $[n,1]^2[s,1].$ We denote $x_n$ to be the $n$th iteration of $(0,1,2),$ $y_n$ the $n$th iteration of $(0,1)$ and $z_n$ of the identity.

We define the group $M_n:=<\negmedspace x_n,y_n,z_n,L_{n-1}\negmedspace>,$ $L_{n}:=<\negmedspace x_n,z_n,L_{n-1}\negmedspace>,$  $H_n:=<\negmedspace L_{n-1}\negmedspace>^{L_n}.$ 
\begin{remark}
    We embed elements of $\mathrm{Aut}(\mathcal{T}_{n-1})$ into $\mathrm{Aut}(\mathcal{T}_n)$ in the following way: 
    \begin{align*}
       \mathrm{Aut}(\mathcal{T}_{n-1})&\xhookrightarrow{} \mathrm{Aut}(\mathcal{T}_n)=\mathrm{Aut}(\mathcal{T}_{n-1})\times \mathrm{Aut}(\mathcal{T}_{n-1})\times \mathrm{Aut}(\mathcal{T}_{n-1}) \rtimes S_3\\
       \sigma &\to (\sigma, id, id).
    \end{align*}
    
\end{remark}

We prove first that $L_n\unlhd M_n.$
\begin{theorem}\label{TH 4.13}
    We have the group inclusion $L_n\unlhd M_n$ and $[M_n:L_n]=2.$
\end{theorem}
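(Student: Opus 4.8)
The plan is to argue by induction on $n$ and to reduce the statement to two explicit properties of the generator $y_n$ that distinguishes $M_n$ from $L_n$. The key simplifying observation is that a subgroup of index $2$ is automatically normal, so it suffices to prove $[M_n:L_n]=2$; the normality of $L_n$ then comes for free.

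For the base case $n=1$ we have $M_1=S_3$ and $L_1=\langle x_1\rangle=\langle(0,1,2)\rangle=A_3$, so $[M_1:L_1]=2$. For the inductive step, assume $L_{n-1}\unlhd M_{n-1}$ with index $2$. First I would introduce the sign map $\epsilon_n\colon M_n\to\{\pm1\}$ obtained by restricting an automorphism to its action on the three subtrees hanging from the root (i.e.\ to level $1$) and composing with the sign homomorphism $S_3\to\{\pm1\}$. By construction $x_n$ induces the $3$-cycle $(0,1,2)$ on level $1$, while $z_n$ and the embedded copy of $L_{n-1}$ (sitting in the first coordinate as $(\sigma,\mathrm{id},\mathrm{id})$) act trivially there; hence $\epsilon_n(L_n)=\{+1\}$. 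On the other hand $y_n$ induces the transposition $(0,1)$, so $\epsilon_n(y_n)=-1$. This already gives $y_n\notin L_n$ and shows $\epsilon_n$ is surjective, so $L_n\subseteq\ker\epsilon_n$ with $[M_n:\ker\epsilon_n]=2$; the remaining task is to promote this to $[M_n:L_n]=2$, equivalently $\ker\epsilon_n=L_n$.

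The heart of the argument is then to show $M_n=L_n\cup L_ny_n$, for which I would establish the two claims (i) $y_n$ normalizes $L_n$ and (ii) $y_n^2\in L_n$. Granting these, $L_n\langle y_n\rangle$ is a subgroup of $M_n$ of order $2\lvert L_n\rvert$ (using $y_n\notin L_n$), and since it contains every generator $x_n,y_n,z_n,L_{n-1}$ it equals $M_n$; thus $[M_n:L_n]=2$ and $L_n\unlhd M_n$. For (i) it is enough to check that conjugation by $y_n$ sends each generator of $L_n$ back into $L_n$. Here I would use the base subgroup $H_n=L_{n-1}\times L_{n-1}^{x_n}\times L_{n-1}^{x_n^2}\le L_n$: conjugating $(\sigma,\mathrm{id},\mathrm{id})$ with $\sigma\in L_{n-1}$ by $y_n$ moves it into the second coordinate and twists it by a section of $y_n$, while conjugating $x_n$ (whose level-$1$ part is $(0,1,2)$) by $y_n$ produces an element whose level-$1$ part is $(0,2,1)$, i.e.\ $x_n^{-1}$ modulo $H_n$. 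Both land in $L_n$ provided (a) the sections of $y_n$ at level-$1$ vertices lie in $M_{n-1}$, which follows by unwinding the doubling/tripling construction of $y_n$ from $y_{n-1}$ and $z_{n-1}$ via Definitions \ref{def 4.16}--\ref{def 5.2}, and (b) $L_{n-1}\unlhd M_{n-1}$, the inductive hypothesis, which guarantees these sections normalize $L_{n-1}$ and hence $H_n$. For (ii), squaring $y_n$ kills the top transposition, so $\epsilon_n(y_n^2)=+1$ and the level-$1$ action is trivial; a direct wreath-product computation then writes $y_n^2$ as an element of $H_n$ times a word in $x_n,z_n$, exhibiting $y_n^2\in L_n$.

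I expect the main obstacle to be the bookkeeping in claims (i) and (ii): one must pin down the sections of $x_n,y_n,z_n$ precisely enough to see how conjugation permutes and twists the three copies of $L_{n-1}$ inside $H_n$, and to verify that $H_n\unlhd M_n$ so that the quotient reasoning is valid. The inductive hypothesis $L_{n-1}\unlhd M_{n-1}$ is exactly what controls these twists, so the argument is genuinely recursive; the sign map $\epsilon_n$ then does the rest of the work for free.
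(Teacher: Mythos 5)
Your skeleton is the same as the paper's: show $y_n\notin L_n$ (via the level-one sign, which the paper leaves implicit), show $y_n^2\in L_n$, and show that conjugation by $y_n$ sends the generators of $L_n$ back into $L_n$; then $M_n=L_n\cup L_ny_n$ gives the index and the normality at once. The base case and your treatment of the embedded copy of $L_{n-1}$ (whose $y_n$-conjugates land in $H_n$ thanks to the inductive normality of $L_{n-1}$ in $M_{n-1}$) are fine.

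The genuine gap is in your claim (i) for the generator $x_n$: it is false that $y_n^{-1}x_ny_n$ equals $x_n^{-1}$ modulo $H_n$. Writing $y_n=(id,y_{n-1},x_{n-1})(0,1)$ and $x_n=(id,id,x_{n-1})(0,1,2)$, one finds $y_n^{-1}x_ny_n=(y_{n-1}^{-1},x_{n-1},y_{n-1})(0,2,1)=(y_{n-1}^{-1},id,y_{n-1}x_{n-1})x_n^2$, so the discrepancy with a power of $x_n$ has sections involving $y_{n-1}^{\pm1}$. Since $y_{n-1}\notin L_{n-1}$ (by your own sign argument applied one level down), this discrepancy does not lie in $H_n=L_{n-1}\times L_{n-1}^{x_n}\times L_{n-1}^{x_n^2}$, and no appeal to the normality of $L_{n-1}$ in $M_{n-1}$ can put it there: normality controls how the sections of $y_n$ \emph{conjugate} elements of $L_{n-1}$, but here the sections of the conjugated element \emph{are} $y_{n-1}$-type elements, not conjugates of elements of $L_{n-1}$. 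The missing ingredient is the identity $z_n=(y_{n-1},y_{n-1},x_{n-1})$, which the paper isolates as a lemma and proves by induction on the Markov construction: it is the generator $z_n$ and its conjugates --- not $H_n$ --- that absorb the stray $y_{n-1}$-sections, as in the paper's decomposition $y_n^{-1}x_ny_n=(z_{n-1}^{-1},id,id)\,z_n^{(x_{n-1}^{-1},id,id)x_n^{-1}}\,(id,x_{n-1},x_{n-1})\,x_n^2\in L_n$. (The same identity settles your claim (ii) on the nose, since $y_n^2=z_n$; note also that you never treat the generator $z_n$ itself under conjugation by $y_n$, though that case is easy.) So your plan is repairable, but only after adding this lemma, which is where the actual content of the paper's proof lies.
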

\begin{lemma}
    We have that $(y_{n-1},y_{n-1},x_{n-1})=z_n$ inside $\mathrm{Aut}(\mathcal{T}).$
\end{lemma}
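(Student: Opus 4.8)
The plan is to prove the identity by induction on $n$, exploiting the feature that $z_n$ — unlike $x_n$ and $y_n$ — carries a trivial top-level permutation at every stage. The crux is a commutation principle: the iteration operation built from $i_{\{s_i\}}$ (which attaches a new bottom level to the tree, as in Definition \ref{def 4.16} and Definition \ref{def 5.2}) commutes with the top-level wreath decomposition $\mathrm{Aut}(\mathcal{T}_k)\cong \mathrm{Aut}(\mathcal{T}_{k-1})^3\rtimes S_3$ precisely when the element being iterated has trivial $S_3$-component. Since $z_{n}$ is maintained in the form $(\cdot,\cdot,\cdot)$ with no $S_3$-part (iteration does not alter the level-one permutation), this principle applies at each step of the recursion defining $z_n$, whereas it would fail for $y_n$ and $x_n$, whose top permutations $(0,1)$ and $(0,1,2)$ mix the three subtrees.

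First I would isolate the commutation lemma: if $\sigma=(\tau_0,\tau_1,\tau_2)\in\mathrm{Aut}(\mathcal{T}_k)$ has trivial top permutation, then $\mathrm{iterate}(\sigma)=(\mathrm{iterate}(\tau_0),\mathrm{iterate}(\tau_1),\mathrm{iterate}(\tau_2))$. Because the top permutation is trivial, every cycle of $\sigma$ lies in a single subtree and has the form $(c\ast w_1,\dots,c\ast w_\ell)$ for a cycle $(w_1,\dots,w_\ell)$ of $\tau_c$. By definition $\mathrm{iterate}(\sigma)=i_{\{s_i\}}(\sigma)$, where $s_i=(0,1)$ on the last vertex of each type-$n$ cycle, $s_i=(0,1,2)$ on the last vertex of each type-$s$ cycle, and $s_i=\mathrm{id}$ elsewhere. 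Since prepending the fixed letter $c$ is order-preserving on words, the canonical least-first presentation of $(c\ast w_1,\dots,c\ast w_\ell)$ is exactly the prepending of the canonical presentation of $(w_1,\dots,w_\ell)$; in particular its designated last vertex is $c\ast(\text{last vertex of }(w_1,\dots,w_\ell))$. Hence the placement of the $(0,1)$'s and $(0,1,2)$'s restricts subtree-by-subtree to the placement prescribed by $\mathrm{iterate}(\tau_c)$, so restricting $i_{\{s_i\}}(\sigma)$ to subtree $c$ returns $\mathrm{iterate}(\tau_c)$.

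With the commutation lemma in hand the induction is short. For the base case I would compute $z_2$ directly: iterating $z_1=\mathrm{id}$, whose fixed points $(0),(1),(2)$ carry types $[n,1]^2[s,1]$, doubles $(0)$ and $(1)$ and triples $(2)$, yielding $(00,01)(02)(10,11)(12)(20,21,22)$, which is precisely $(y_1,y_1,x_1)$. For the inductive step, assuming $z_{n-1}=(y_{n-2},y_{n-2},x_{n-2})$, I apply the commutation lemma together with the defining relations $\mathrm{iterate}(y_{n-2})=y_{n-1}$ and $\mathrm{iterate}(x_{n-2})=x_{n-1}$ to obtain
\[
z_n=\mathrm{iterate}(z_{n-1})=\big(\mathrm{iterate}(y_{n-2}),\mathrm{iterate}(y_{n-2}),\mathrm{iterate}(x_{n-2})\big)=(y_{n-1},y_{n-1},x_{n-1}).
\]

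The hard part will be the type bookkeeping that legitimizes the commutation lemma: applying $\mathrm{iterate}$ requires knowing the type ($s$ or $n$) of each cycle, and these must restrict to subtrees compatibly. I would therefore prove the strengthened statement that $z_n=(y_{n-1},y_{n-1},x_{n-1})$ holds as an identity of \emph{typed} automorphisms, carrying type-consistency as an extra inductive invariant. This is anchored by the fact that the initial labels were chosen self-consistently: doubling a type-$n$ fixed point produces exactly the label $[n,2][s,1]$ of $y_1$, and tripling a type-$s$ fixed point produces the label $[s,3]$ of $x_1$. Granting this, each transition $[n,k]\to[n,2k][s,k]$ and $[s,k]\to[s,3k]$ is applied identically whether a cycle is viewed inside $z_{n-1}$ or inside the section $y_{n-2}$ or $x_{n-2}$, so the types propagate in lockstep and the commutation lemma remains applicable at every level.
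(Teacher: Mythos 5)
Your proposal is correct and takes essentially the same route as the paper: an induction on $n$ whose inductive step applies the Markov iteration componentwise to $z_{n-1}=(y_{n-2},y_{n-2},x_{n-2})$, which is exactly the paper's (tersely stated) argument. Your commutation lemma --- that iteration restricts subtree-by-subtree because prepending a fixed letter preserves the canonical cycle presentation and its designated last vertex --- together with the typed-automorphism strengthening of the induction hypothesis, simply makes rigorous the step the paper compresses into ``we apply the Markov model for each disjoint cycle,'' and your base case at $n=2$ replaces the paper's degenerate base case $z_1=(\mathrm{id},\mathrm{id},\mathrm{id})$ with types those of $y_0,y_0,x_0$.
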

\begin{proof}
    We use induction on $ n$. The base case is given by noting that $ z_1 = id = (id, id, id) $, and the types correspond to $y_0, y_0, x_0$. We assume it is true up to $n-1$ and check for $ n$. We compute $z_n$ given $z_{n-1},$ $z_{n_1}=(y_{n-2},y_{n-2},x_{n-2})$ we apply the Markov model for each disjoint cycle and we obtain $(y_{n-1},y_{n-1},x_{n-1}).$
\end{proof}
We now prove the theorem.
\begin{proof}[Proof of Theorem \ref{TH 4.13}]
    We first prove that $y_n^2 \in L_n$. We compute $y_n^2=(y_{n-1},y_{n-1},x_{n-1}^2)=z_n(id,id,x_{n-1}^{-1}) \\ \in L_n.$ 
    We prove normality on generators. We show the case of $x_n$ as an example;
   \[
\resizebox{\textwidth}{!}{$
y_{n}^{-1}x_ny_n=(y_{n-1}^{-1},x_{n-1},y_{n-1})(0,2,1)
=(y_{n-1}^{-1},id,y_{n-1}x_{n-1})x_n^2
=(z_{n-1}^{-1},id,id)z_n^{(x_{n-1}^{-1},id,id)x_n^{-1}}
(id,x_{n-1},x_{n-1})x_n^2\in L_n
$}
\]
     The same can be checked for $z_n.$ For $x_m,z_m$ with $m<n$ we can pick them so that they act in the third part of the tree and the results follow. Hence, it is a normal subgroup, as shown by the above computation of index 2.
\end{proof}

\begin{theorem}\label{th 4.22}
    The following group equality holds $H_n=L_{n-1}\times L_{n-1}^{x_n}\times L_{n-1}^{x_n^2}.$
\end{theorem}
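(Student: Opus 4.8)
The statement to prove is
\[
H_n = L_{n-1} \times L_{n-1}^{x_n} \times L_{n-1}^{x_n^2},
\]
where by definition $H_n = \langle L_{n-1}\rangle^{L_n}$ is the normal closure of $L_{n-1}$ inside $L_n = \langle x_n, z_n, L_{n-1}\rangle$. The plan is to show that the right-hand side is itself a normal subgroup of $L_n$ containing $L_{n-1}$, and then that it is the \emph{smallest} such subgroup, which pins it down as the normal closure.

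First I would verify that $R := L_{n-1} \times L_{n-1}^{x_n} \times L_{n-1}^{x_n^2}$ is actually an internal direct product, i.e. that the three conjugates sit in distinct coordinate slots of $\mathrm{Aut}(\mathcal{T}_n) \cong \mathrm{Aut}(\mathcal{T}_{n-1})^3 \rtimes S_3$. Under the embedding given in the remark, $L_{n-1} \hookrightarrow \mathrm{Aut}(\mathcal{T}_n)$ lands in the first factor as $(\sigma, \mathrm{id}, \mathrm{id})$. Since $x_n$ is the $n$th iterate of $(0,1,2)$ and hence realizes the $3$-cycle permuting the three root-subtrees (as in Example \ref{Example 4.18}), conjugation by $x_n$ cyclically shifts the active coordinate: $L_{n-1}^{x_n}$ lands in the second slot and $L_{n-1}^{x_n^2}$ in the third. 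Because these three factors occupy disjoint coordinates of $\mathrm{Aut}(\mathcal{T}_{n-1})^3$, they commute pairwise and intersect trivially, so the product is genuinely direct and the ``$=$'' in the claim is a well-defined subgroup.

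Next I would establish that $R \trianglelefteq L_n$. It suffices to check that $R$ is stable under conjugation by each generator of $L_n$, namely $x_n$, $z_n$, and the elements of $L_{n-1}$. Conjugation by $x_n$ permutes the three factors cyclically ($L_{n-1} \to L_{n-1}^{x_n} \to L_{n-1}^{x_n^2} \to L_{n-1}$), so it preserves $R$. Conjugation by an element $\ell \in L_{n-1}$ fixes the first factor (which is normalized since $L_{n-1}$ is a group) and fixes the other two factors pointwise, because $\ell = (\ell, \mathrm{id}, \mathrm{id})$ commutes with anything supported in the second and third coordinates; hence $R^\ell = R$. The case of $z_n$ is where I would invoke the preceding lemma: since $z_n = (y_{n-1}, y_{n-1}, x_{n-1})$ acts diagonally with components $y_{n-1}, x_{n-1} \in L_{n-1}$ in each slot, conjugation by $z_n$ acts on each factor $L_{n-1}^{x_n^i}$ by an inner-type automorphism coming from $L_{n-1}$ itself, so each slot is preserved. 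This shows $R$ is normal in $L_n$.

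Finally, since $L_{n-1} \subseteq R$ and $R \trianglelefteq L_n$, the normal closure satisfies $H_n \subseteq R$. For the reverse inclusion, $H_n$ must contain $L_{n-1}$ together with all its $L_n$-conjugates; in particular it contains $L_{n-1}^{x_n}$ and $L_{n-1}^{x_n^2}$, and being closed under products it contains their product, which is exactly $R$. Hence $H_n = R$, as claimed. I expect the main obstacle to be the conjugation-by-$z_n$ computation: one must carefully track how the diagonal element $(y_{n-1}, y_{n-1}, x_{n-1})$ acts on each coordinate slot and confirm that the conjugating components genuinely lie in $L_{n-1}$ (so that normality of $L_{n-1}$ in $M_{n-1}$, established in Theorem \ref{TH 4.13}, can be applied coordinatewise), rather than mixing slots; the direct-product/coordinate bookkeeping is routine once the action of $x_n$ on the three subtrees is fixed.
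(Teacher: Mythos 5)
Your proposal is correct and follows essentially the same route as the paper: the inclusion $H_n \supseteq L_{n-1}\times L_{n-1}^{x_n}\times L_{n-1}^{x_n^2}$ because the normal closure contains each conjugate factor, and the inclusion $H_n \subseteq L_{n-1}\times L_{n-1}^{x_n}\times L_{n-1}^{x_n^2}$ by showing the right-hand side is normal in $L_n$ via a generator check, with the $z_n$-conjugation handled through $z_n=(y_{n-1},y_{n-1},x_{n-1})$ and the normality $L_{n-1}\trianglelefteq M_{n-1}$ from Theorem \ref{TH 4.13}. One small slip: $y_{n-1}\notin L_{n-1}$ (it represents the nontrivial coset of index $2$), but your parenthetical appeal to Theorem \ref{TH 4.13} is precisely the correct repair and is exactly what the paper's computation $(x_{n-1},\mathrm{id},\mathrm{id})^{z_n}=(x_{n-1}^{y_{n-1}},\mathrm{id},\mathrm{id})$ relies on.
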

\begin{proof}
    We notice that $\supseteq$ follows from the fact that each piece belongs to $H_n.$ The other inclusion can be proven by showing that $L_{n-1}\times L_{n-1}^{x_n}\times L_{n-1}^{x_n^2}$ is normal in $L_n$. 
    We can prove it for generators, this is to prove that $(x_{n-1}, id, id)^{x_n},(x_{n-1}, id, id)^{z_n},(z_{n-1}, id, id)^{x_n},(z_{n-1}, id, id)^{z_n}\in L_{n-1}\times L_{n-1}^{x_n}\times L_{n-1}^{x_n^2}.$ We have that $(x_{n-1},id,id)^{x_n},(y_{n-1},id,id)^{x_n}\in L_{n-1}\times L_{n-1}^{x_n}\times L_{n-1}^{x_n^2}.$ We check then\\ $(x_{n-1},id,id)^{z_n}=(y_{n-1},y_{n-1},x_{n-1})(x_{n-1},id,id)(y_{n-1}^{-1},y_{n-1}^{-1},x_{n-1}^{-1})=(x_{n-1}^{y_{n-1}},id,id)$ and we already know that $x_{n-1}^{y_{n-1}}\in L_{n-1}$ by Theorem \ref{TH 4.13}. 
    
    We proceed similarly with the following:
    \begin{align*}
        &(z_{n-1},id,id)^{z_n}=(y_{n-1},y_{n-1},x_{n-1})(z_{n-1},id,id)(y_{n-1}^{-1},y_{n-1}^{-1},x_{n-1}^{-1})=(z_{n-1}^{y_{n-1}},id,id)\in L_n \times L_{n-1}^{x_n}\times L_{n-1}^{x_n^2},
    \end{align*}
    where the inclusion follows from Theorem \ref{TH 4.13}.

\end{proof}
We also give the following result:
\begin{theorem}\label{th 4.23}
    The index of groups is $[L_n:H_n]=12$ and the quotient $L_n/H_n=A_4.$
\end{theorem}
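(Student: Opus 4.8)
The plan is to compute the index $[L_n:H_n]$ and the quotient structure directly from the definitions. Recall that $L_n = \langle x_n, z_n, L_{n-1}\rangle$ and $H_n = L_{n-1}\times L_{n-1}^{x_n}\times L_{n-1}^{x_n^2}$, where by Theorem \ref{th 4.22} we already know $H_n$ is normal in $L_n$ (it is the normal closure of the embedded copy $L_{n-1} = L_{n-1}\times\{id\}\times\{id\}$). So the quotient $L_n/H_n$ is well-defined, and the whole task is to identify it. First I would observe that modulo $H_n$, every element of the embedded $L_{n-1}$ (and its conjugates $L_{n-1}^{x_n}, L_{n-1}^{x_n^2}$) dies, so the quotient is generated by the images of the two remaining generators $x_n$ and $z_n$. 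The strategy is therefore to understand what $x_n$ and $z_n$ look like in $\mathrm{Aut}(\mathcal{T}_n)\cong \mathrm{Aut}(\mathcal{T}_{n-1})^3\rtimes S_3$ modulo the factor $H_n$, which kills all the $\mathrm{Aut}(\mathcal{T}_{n-1})$ coordinates and leaves only the top $S_3$-action plus the information about how the three tree-coordinates are permuted at a coarser level.

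The key computational input is that $x_n$ has top symbol $(0,1,2)\in S_3$ (it is the $n$th tripling of the $3$-cycle), and by the preceding lemma $z_n = (y_{n-1},y_{n-1},x_{n-1})$ has trivial top symbol. So naively the image of $\langle x_n, z_n\rangle$ in the top $S_3$ would only be the cyclic group $\langle (0,1,2)\rangle$, which has order $3$ — too small to give index $12$ or quotient $A_4$. This tells me the correct move: rather than quotienting only to the top level $S_3$, I should pass to the quotient $L_n/H_n$, which remembers the action of $L_n$ on level $n$ \emph{up to the action already recorded inside the three copies of $L_{n-1}$}. Concretely, I would set up the map $L_n \to (\mathrm{Aut}(\mathcal{T}_1))\wr(\text{level-}1\text{ structure})$ and identify $H_n$ as the kernel of the induced map to a specific order-$12$ group. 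The cleanest route is to exhibit an explicit surjection $\varphi: L_n \twoheadrightarrow A_4$ with kernel exactly $H_n$: since $A_4 = V_4\rtimes C_3$, I would match $V_4$ with the classes of $z_n$ and its $x_n$-conjugates (which, modulo $H_n$, give three commuting involutions summing to the identity, i.e.\ a copy of the Klein four-group) and match the $C_3$ factor with the class of $x_n$.

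The concrete steps I would carry out, in order, are: (1) Compute $z_n^2 \bmod H_n$ and show it lies in $H_n$, so $\bar z_n$ is an involution in the quotient; this uses $z_n=(y_{n-1},y_{n-1},x_{n-1})$ together with $y_{n-1}^2=z_{n-1}\in L_{n-1}$ from Theorem \ref{TH 4.13}. (2) Compute the conjugates $z_n^{x_n}$ and $z_n^{x_n^2}$ and show that, modulo $H_n$, the three images $\bar z_n, \bar z_n^{\,x_n}, \bar z_n^{\,x_n^2}$ pairwise commute and generate a group isomorphic to $V_4$ (equivalently, their product is trivial modulo $H_n$). (3) Show $\bar x_n$ has order $3$ modulo $H_n$ and acts on this $V_4$ by cyclically permuting the three involutions, producing the semidirect product $V_4\rtimes C_3 = A_4$. (4) Check the map is well-defined and surjective, and that no further relations collapse the group, so the order of the quotient is exactly $12$, giving $[L_n:H_n]=12$ and $L_n/H_n\cong A_4$. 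The main obstacle I anticipate is step (2): verifying that the three conjugates of $\bar z_n$ actually close up into a $V_4$ rather than something larger requires carefully tracking the tree-coordinate data of $z_n$ and its conjugates and confirming that all the ``extra'' pieces land inside $H_n = L_{n-1}\times L_{n-1}^{x_n}\times L_{n-1}^{x_n^2}$; this is where the recursive identity $z_n=(y_{n-1},y_{n-1},x_{n-1})$ and the normality of $L_{n-1}$ in $M_{n-1}$ must be used most delicately.
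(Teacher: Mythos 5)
Your proposal is correct and follows essentially the same route as the paper: the paper likewise reduces everything to showing $z_n^2=(y_{n-1}^2,y_{n-1}^2,x_{n-1}^2)\in H_n$, computing the conjugates $z_n^{x_n}$, $z_n^{x_n^2}$ explicitly and verifying the product relations that make $\bar z_n,\bar z_n^{x_n},\bar z_n^{x_n^2}$ a Klein four-group modulo $H_n$, and using $x_n$ (which moves level one, while $\langle H_n,z_n\rangle^{L_n}$ fixes it, and satisfies $x_n^3\in H_n$-side) as the order-3 element on top---the paper merely packages this through the intermediate subgroup $K_n=\langle H_n,z_n\rangle^{L_n}$ with $[L_n:K_n]=3$ and $K_n/H_n=V_4$, and then cites the classification of groups of order $12$. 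If anything, your final step is slightly cleaner: identifying the quotient directly as $V_4\rtimes C_3$ with $\bar x_n$ cyclically permuting the three involutions rules out $C_3\times V_4$ explicitly, a point the paper's appeal to ``possibilities for a group of order $12$'' leaves implicit.
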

\begin{proof}
    We divide the proof into smaller pieces; namely, we prove that $K_n=<\negmedspace H_n,z_n\negmedspace>^{L_n}$ has index $3$ in $L_n.$ We prove first that $x_n\notin K_n$, which follows from the fact that $x_n$ permutes the first level and every element in $K_n$ fixes the first level. We now compute $x_{n}^3=(x_{n-1},x_{n-1},x_{n-1})\in K_n.$ We want to prove that $[K_n:H_n]=4$ and moreover $K_n/H_n=V_4.$
    
    First we notice that the quotient is generated by $z_n,z_n^{x_{n-1}},z_n^{x_{n-1}^2}.$ We compute the order of each element. For this purpose, it is enough to compute the order of one as the others are conjugates of this. We compute the order for $z_n,$ $z_n^2=(y_{n-1}^2,y_{n-1}^2,x_{n-1}^2)\in H_n.$ We give a more explicit formula for $$z_n^{x_{n}}=((id,id,x_{n-1}),(0,1,2))(y_{n-1},y_{n-1},x_{n-1})((x_{n-1}^{-1},id,id),(0,2,1))=(y_{n-1},x_{n-1},x_{n-1}y_{n-1}x_{n-1}^{-1})$$ and 
    \begin{align*}
        &z_n^{x_{n}^2}=((id,x_{n-1},x_{n-1}),(0,2,1))(y_{n-1},y_{n-1},x_{n-1})((x_{n-1}^{-1},x_{n-1}^{-1},id),(0,1,2))=(x_{n-1},y_{n-1}^{x_{n-1}},y_{n-1}^{x_{n-1}}).
    \end{align*}
    We want to give a presentation of the group and use that to prove that indeed it is $V_4.$ We want to prove that any two of the generators multiplied give the third. This is $z_nz_n^{x_n}=z_n^{x_n^2},z_n^{x_n}z_n^{x_n^2}=z_n,z_n^{x_n^2}z_n=z_n^{x_n}.$
    Moreover
    $$z_nz_n^{x_n}=(y_{n-1},y_{n-1},x_{n-1})(y_{n-1},x_{n-1},x_{n-1}y_{n-1}x_{n-1}^{-1})=(y_{n-1}^2,y_{n-1}x_{n-1},x_{n-1}^2y_{n-1}x_{n-1}^{-1}),$$ this is equivalent to $z_{n-1}^{x_{n-1}^2},$ since we can obtain one from the other by operations by things in $H_n,$ 
    $$(x_{n-1},y_{n-1}^{x_{n-1}},y_{n-1}^{x_{n-1}})=(id,x_{n-1},x_{n-1}^{-1})z_nz_n^{x_n}(y_{n-1}^{-2}x_{n-1},x_{n-1}^{-2},id).$$
    We proceed now with the following:
    $$z_{n}^{x_n}z_{n}^{x_{n}^2}=(y_{n-1},x_{n-1},y_{n-1}^{x_{n-1}})(x_{n-1},y_{n-1}^{x_{n-1}},y_{n-1}^{x_{n-1}})=(y_{n-1}x_{n-1},x_{n-1}y_{n-1}^{x_{n-1}},(y_{n-1}^{x_{n-1}})^2).$$
    We prove as before by give an equation relating $z_{n}^{x_n}z_{n}^{x_n^2}$ with elements in $H_n:$
    $$(y_{n-1},y_{n-1},x_{n-1})=(id,x_{n-1}^{-2},(y_{n-1}^{x_{n-1}})^2)z_{n}^{x_n}z_{n}^{x_n^2}(x_{n-1}^{-1},x_{n-1},x_{n-1}).$$

     We compute now $z_{n}^{x_n^2}z_{n}:$
     $$z_{n}^{x_n^2}z_{n}=(x_{n-1},y_{n-1}^{x_{n-1}},y_{n-1}^{x_{n-1}})(y_{n-1},y_{n-1},x_{n-1})=(x_{n-1}y_{n-1},y_{n-1}^{x_{n-1}}y_{n-1},x_{n-1}y_{n-1}),$$
     
     as before, we have the following:
    $$(y_{n-1},x_{n-1},x_{n-1}y_{n-1}x_{n-1}^{-1})=(x_{n-1}^{-1},x_{n-1}(y_{n-1}^{x_{n-1}}y_{n-1})^{-1},id)z_{n}^{x_n^2}z_{n}(id,id,x_{n-1}^{-1}).$$

     This implies that we have a group with the following presentation:\\ $<\negmedspace a,b,c |a^2=b^2=c^2=id,ab=c,bc=a,ca=b\negmedspace>.$ We deduce that this group is commutative. By symmetry, we check that $ ab=ba$. We multiply both sides by $ab$ and get $c^2=id$, which is true, and then $ab=ba$. Therefore, the group is commutative and is $V_4$. We also know that the original group $L_n/H_n$ is $A_4$ by considering the possibilities for a group of order $12$.
\end{proof}
We are prepared to prove that the frequencies match those of the Markov process. We first recall Definition $3.16$ from \cite{citation-key}.
\begin{definition}
    Given $S_1\subseteq S_2\subseteq \mathrm{Aut}(T_n)$ subsets of $\mathrm{Aut}(T_n),$ we define $CD(S_1,S_2)$ to be the set  $(c,q_c)$ with $c$ a cycle type existing in $S_1$ and $q_c=p_c\cdot \frac{|S_1|}{|S_2|}$ where $p_c$ is the proportion of elements in $S_1$ with cycle type $c.$ 
\end{definition}

We also define how to relate the cycle data of a cartesian product to those of its components.
\begin{definition}\label{4.21}
    Given $A,B$ sets of tuples $(c_i,q_i)$ with $c_i$ a partition of $3^n$ and $1>q_i>0,$ we can define $A\times B=\{(c_i*c_j,q_iq_j)|(c_i,q_i)\in A,(c_j,q_j)\in B\}_{i\in I,j\in J}.$ Here $c_i*c_j$ denotes a partition of $3^{n+1}$ given by combining both.
\end{definition}
We give a preliminary result which will be useful later on:
\begin{proposition}\label{prop3.14}
    Given $H_1,H_2\subset G$ two disjoint groups such that $H_1H_2=H_2H_1,$ we have that $H_1\times H_2\subset G.$ This group satisfies that $CD(H_1,G)\times CD(H_2,G)$ is the same as $CD(H_1\times H_2,G)$ up to a factor of $|G|$ in the frequencies.
   
\end{proposition}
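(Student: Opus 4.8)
The plan is to unpack the definition of $CD$ on both sides and verify that the $(c,q)$-pairs coincide up to the stated global factor of $|G|$. First I would record the hypotheses precisely: $H_1,H_2$ are subgroups of $G$ with $H_1\cap H_2=\{\mathrm{id}\}$ (``disjoint'') and $H_1H_2=H_2H_1$, so that $H_1H_2$ is itself a subgroup of $G$ and the product map $H_1\times H_2\to H_1H_2$ is a bijection; this gives $|H_1\times H_2|=|H_1|\,|H_2|$ and justifies writing $H_1\times H_2\subset G$. I take the cycle type of a pair $(h_1,h_2)$ to be the concatenation $c(h_1)\ast c(h_2)$ as in Definition \ref{4.21}, i.e.\ the disjoint-cycle structure of the element acting on the disjoint union of the two underlying $3^n$-point sets, yielding a partition of $3^{n+1}$.

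Next I would compute each side explicitly. On the right, an element of $H_1\times H_2$ has cycle type $c_i\ast c_j$ exactly when its first coordinate has type $c_i$ and its second has type $c_j$; since the coordinates are independent, the proportion of elements of $H_1\times H_2$ with type $c_i\ast c_j$ is $p_{c_i}^{(1)}\,p_{c_j}^{(2)}$, where $p_{c_i}^{(1)}$ is the proportion in $H_1$ and $p_{c_j}^{(2)}$ the proportion in $H_2$. Hence
\[
CD(H_1\times H_2,G)=\Bigl\{\bigl(c_i\ast c_j,\; p_{c_i}^{(1)}p_{c_j}^{(2)}\cdot\tfrac{|H_1||H_2|}{|G|}\bigr)\Bigr\}_{i,j}.
\]
On the left, by Definition \ref{4.21} applied to $CD(H_1,G)=\{(c_i,\,p_{c_i}^{(1)}\frac{|H_1|}{|G|})\}$ and $CD(H_2,G)=\{(c_j,\,p_{c_j}^{(2)}\frac{|H_2|}{|G|})\}$, the product takes the pair $(c_i\ast c_j,\; p_{c_i}^{(1)}p_{c_j}^{(2)}\cdot\frac{|H_1||H_2|}{|G|^2})$. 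Comparing the two expressions, the cycle types agree termwise and the frequencies differ precisely by the factor $\tfrac{1}{|G|}$, which is the asserted ``factor of $|G|$.''

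The one genuine point requiring care — the main obstacle — is the passage from ``cycle type of $(h_1,h_2)$ equals $c_i\ast c_j$'' to ``the proportion factors as $p_{c_i}^{(1)}p_{c_j}^{(2)}$.'' This needs the bijection $H_1\times H_2\cong H_1H_2$ to be compatible with cycle type, so that distinct pairs are not accidentally collapsed and no double counting occurs; this is exactly where disjointness $H_1\cap H_2=\{\mathrm{id}\}$ and the permutability $H_1H_2=H_2H_1$ are used, guaranteeing $|H_1H_2|=|H_1|\,|H_2|$ and that the acting sets are disjoint so concatenation of cycle types is well defined. I would therefore state and check this counting bijection carefully, and then the remaining verification is the routine algebraic comparison of the two frequency formulas above.
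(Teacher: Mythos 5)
Your proposal is correct and follows essentially the same route as the paper: both unpack the definition of $CD$, use independence of the two coordinates to factor the proportion of a combined cycle type as $p^{(1)}_{c_i}p^{(2)}_{c_j}$, and compare $p^{(1)}_{c_i}p^{(2)}_{c_j}\frac{|H_1||H_2|}{|G|}$ against $p^{(1)}_{c_i}p^{(2)}_{c_j}\frac{|H_1||H_2|}{|G|^2}$ to extract the factor $|G|$. The only cosmetic differences are that the paper aggregates frequencies over all pairs $(i,j)$ with $c_i\ast c_j=c$, writing the frequency as $|G|\sum_{i,j\,:\,c_i\ast c_j=c}q_iq_j$, where you compare tuples termwise, and that your careful justification of $|H_1\times H_2|=|H_1|\,|H_2|$ via disjointness and permutability is left implicit in the paper.
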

\begin{proof}
    We first notice that given any possible cycle type in $H_1\times H_2$ that is produced by a combination of a cycle type $c_i$ of $H_1$ and a cycle type $c_j$ of $H_2.$ Moreover it has the following probability $\frac{\sum_{i,j|c_i*c_j=c}p_ip_j|H_1||H_2|}{G}=\frac{\sum_{i,j|c_i*c_j=c}q_iq_j|G||G|}{G}=|G|\sum_{i,j|c_i*c_j=c} q_iq_j.$
\end{proof}
\begin{remark}
    Proposition \ref{prop3.14} is used implicitly and justifies relating the cycle data of $L_{n-1}\times L_{n-1}^{x_{n-1}}\times L_{n-1}^{x_{n-1}^1}$ with the product of three cycles data of $L_{n-1}.$
\end{remark}
Sometimes we want to express that two cycle data are the same, but one has each cycle doubled or tripled, hence we define:
\begin{definition}
    Given $A$ as in the Definition \ref{4.21}, we define $\emph{dA}$ as the same set but taking $(c_i',q_i)$ where $c_i'$ is a partition of $3^{n+1}$ given by doubling each element in the following way. If we have $\sum n_j=3^{n}$ then we get $\sum 2n_j+\sum n_j=3^{n+1}.$ In the same way we define $\emph{tA}$ as tripling each element. Therefore we go from $\sum n_j=3^{n}$ to $\sum 3n_j=3^{n+1}.$
\end{definition}
We first introduce some notation as in \cite{citation-key}.
\begin{align*}
 A_1^{(n)}&:=\textit{the cycle data obtained by applying the Markov process n-1 times to } ([s,1]^3,\frac{1}{12});\\
    A_2^{(n)}&:=\textit{the cycle data obtained by applying the Markov process n-1 times to } ([s,3],\frac{2}{3});\\
    A_3^{(n)}&:=\textit{the cycle data obtained by applying the Markov process n-1 times to } ([n,1]^2[s,1],\frac{1}{4});\\
    A_{4}^{(n)}&:=\textit{the cycle data obtained by applying the Markov process n-1 times to } ([n,1],\frac{1}{2});\\
    A_5^{(n)}&:=\textit{the cycle data obtained by applying the Markov process n-1 times to } ([n,2][s,1],\frac{1}{2});\\
    A_6^{(n)}&:=\textit{the cycle data obtained by applying the Markov process n-1 times to } ([n,1][s,2],\frac{1}{2});\\
    A_7^{(n)}&:=\textit{the cycle data obtained by applying the Markov process n-1 times to } ([s,1],1).\\
\end{align*}
\begin{theorem}\label{th 4.24}
    The following is true for $n\geq 2:$
    \begin{enumerate}[i)]
        \item $A_1^{(n)}=CD(H_n,L_n);$
        \item $A_2^{(n)}=CD(K_nx_{n}\sqcup K_{n}x_{n}^2,L_n);$
        \item $A_3^{(n)}=CD(z_{n}H_n\sqcup z_{n
        }^{x_{n}}H_n\sqcup z_{n}^{x_n^2}H_n,L_n);$
        \item $A_{4}^{(n+1)}=CD(L_ny_n,M_n).$
    \end{enumerate}
\end{theorem}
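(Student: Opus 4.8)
The plan is to prove (i)--(iv) simultaneously by induction on $n$, the base case $n=2$ being a direct computation from $M_1=S_3$ and the explicit first-level data listed before Definition \ref{Definition 4.9}. The mechanism driving the induction is a single observation about cycle types in the wreath decomposition: for $\sigma=(\sigma_0,\sigma_1,\sigma_2)\gamma\in\mathrm{Aut}(\mathcal{T}_n)$ with $\gamma\in S_3$, the cycle type of $\sigma$ is (a) the concatenation of the cycle types of $\sigma_0,\sigma_1,\sigma_2$ when $\gamma=\mathrm{id}$; (b) the tripling of the cycle type of the branch product $\sigma_2\sigma_1\sigma_0$ when $\gamma$ is a $3$-cycle; and (c) the doubling of the cycle type of $\sigma_j\sigma_i$, concatenated with the cycle type of $\sigma_k$, when $\gamma$ swaps $i,j$ and fixes $k$. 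These three cases reproduce exactly the outcome shapes $[s,k]^3$, $[s,3k]$ and $[n,2k][s,k]$ of the transition in Definition \ref{Definition 4.9}; establishing this elementary lemma is the first step, after which the proof becomes a matter of matching the right subset of $M_n$ to each case and feeding in the inductive hypothesis on the branches.

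Parts (i)--(iii) I would handle together, since by Theorems \ref{th 4.22} and \ref{th 4.23} the group $L_n$ decomposes as the disjoint union $H_n\sqcup(z_nH_n\sqcup z_n^{x_n}H_n\sqcup z_n^{x_n^2}H_n)\sqcup(K_nx_n\sqcup K_nx_n^2)$, the three blocks having relative size $\tfrac1{12},\tfrac14,\tfrac23$; these are precisely the first-level probabilities of $[s,1]^3$, $[s,1][n,1]^2$ and $[s,3]$, so (i)--(iii) amount to refining the identity $CD(L_n,L_n)=A_7^{(n+1)}$ block by block. For (i), every element of $H_n=L_{n-1}\times L_{n-1}^{x_n}\times L_{n-1}^{x_n^2}$ fixes the first level, so case (a) and the three-factor version of Proposition \ref{prop3.14} give $CD(H_n,L_n)=CD(L_{n-1},L_{n-1})^{\times3}$; the inductive hypothesis $CD(L_{n-1},L_{n-1})=A_7^{(n)}$ together with the independence of the Markov process on distinct cycles then yields $(A_7^{(n)})^{\times3}=A_1^{(n)}$. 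For (ii), the elements of $K_nx_n\sqcup K_nx_n^2$ have $3$-cycle image, so case (b) applies; since one checks that a Markov step commutes with tripling (hence $A_2^{(n)}=t\,A_7^{(n)}$ up to its normalizing weight $\tfrac23$), it suffices to prove that the branch products $\sigma_2\sigma_1\sigma_0$ are equidistributed over $L_{n-1}$, which follows from a counting argument: fixing two coordinates lets the remaining one sweep out $L_{n-1}$, and passing between $K_nx_n$ and $K_nx_n^2$ only permutes which coordinate is free.

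The heart of (iii) is the identification of $K_n$, forced by the Lemma $z_n=(y_{n-1},y_{n-1},x_{n-1})$ together with $K_n/H_n\cong V_4$, as the even-parity subgroup of $M_{n-1}^{3}$ consisting of triples $(a,b,c)$ with an even number of coordinates in $M_{n-1}\setminus L_{n-1}$; this is exactly the group-theoretic shadow of the type-dependence relation of Section \ref{2}. Consequently each coset $z_n^{x_n^i}H_n$ consists of the branch-preserving triples with two coordinates in $M_{n-1}\setminus L_{n-1}=L_{n-1}y_{n-1}$ (type $n$) and one in $L_{n-1}$ (type $s$), the position of the $s$-coordinate being cyclically shifted by conjugation with $x_n$. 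Applying case (a) and Proposition \ref{prop3.14}, then invoking the inductive hypotheses $CD(L_{n-1}y_{n-1},M_{n-1})=A_4^{(n)}$ (this is part (iv) one level down) and $CD(L_{n-1},L_{n-1})=A_7^{(n)}$, identifies the total contribution of the three cosets with $A_7^{(n)}\times(A_4^{(n)})^{\times2}$, whose weight $1\cdot\tfrac12\cdot\tfrac12=\tfrac14$ and cycle shape are exactly those of $A_3^{(n)}$. Thus (i)--(iii) close up, each level feeding the next.

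Part (iv) is the doubling case, and is where I expect the real difficulty. Since $\ker(L_n\to S_3)=K_n$, the elements of $L_ny_n$ with image $(0,1)$ are precisely $K_ny_n$, and by $S_3$-symmetry it is enough to analyze this piece and then account for the three transpositions. Writing $y_n=(a,b,c)(0,1)$, the relation $y_n^2=z_n$ forces $ab=ba=y_{n-1}$ and $c^2=x_{n-1}$, and for $\sigma=(k_0,k_1,k_2)y_n$ case (c) gives cycle type $d\bigl(\mathrm{c.t.}(k_1bk_0a)\bigr)\sqcup\mathrm{c.t.}(k_2c)$. Unlike the split and triple cases, here the doubled factor and the untouched branch come from different coordinates, so the crux is to control the \emph{joint} distribution of $(k_1bk_0a,\,k_2c)$ as $(k_0,k_1,k_2)$ ranges over $K_n$, and to verify that doubling the first entry and adjoining the second reproduces, cell by cell and with the correct $n/s$ labels, one Markov step applied to $A_4^{(n)}$. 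The plan is to show $k_1bk_0a$ is equidistributed over $M_{n-1}$ while $k_2c$ tracks the complementary $s$-behaviour, using the even-parity description of $K_n$ to pin down the correlation between the two coordinates; proving that this correlation produces exactly the pairing of an $[n,2k]$ cell with an $[s,k]$ cell demanded by Definition \ref{Definition 4.9} is the technical obstacle on which the whole theorem turns. Once (iv) is secured at level $n$, the simultaneous induction is complete.
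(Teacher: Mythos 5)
Your proposal follows the same skeleton as the paper's proof: a simultaneous induction on all four parts with base case $n=2$, the wreath-recursion trichotomy for cycle types (concatenation when the first level is fixed, tripling of the branch product under a $3$-cycle, doubling plus a spectator branch under a transposition), and the same decompositions $L_n=H_n\sqcup\bigl(\bigsqcup_i z_n^{x_n^i}H_n\bigr)\sqcup(K_nx_n\sqcup K_nx_n^2)$ and $L_ny_n=K_ny_n\sqcup K_nx_ny_n\sqcup K_nx_n^2y_n$, fed by the inductive hypotheses on the branches. Where you genuinely depart from the paper is in execution, and both departures buy something. First, your identification of $K_n$ as the even-parity subgroup (triples with an even number of branch coordinates in $M_{n-1}\setminus L_{n-1}$) packages into one observation what the paper verifies coset by coset from the explicit representatives $id$, $z_n$, $z_n^{x_n}$, $z_n^{x_n^2}$: that the products $g_0g_1g_2$ lie in $L_{n-1}$ (part (ii)), that each coset pairs one $s$-branch with two $n$-branches (part (iii)), and—crucially for (iv)—that the doubled pair and the spectator branch have complementary labels, which is exactly the Markov pairing of $[n,2k]$ with $[s,k]$ and of $[s,2k]$ with $[n,k]$. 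The paper instead enumerates all twelve cosets $H_nz_n^{x_n^i}x_n^jy_n$ of $L_ny_n$ explicitly (and in doing so writes $A_4^{(n)}$ in several places where $A_6^{(n)}$ is meant, since $A_4^{(n+1)}=A_5^{(n)}\sqcup A_6^{(n)}$); your parity bookkeeping avoids such slips. Second, your symmetry reduction in (iv)—$(K_ny_n)^{x_n}=K_ny_n^{x_n}=K_nx_ny_n$ because $K_n\unlhd L_n$ and $L_n\unlhd M_n$, so the three transposition cosets carry identical cycle data—cuts the twelve-coset enumeration to four; the paper never exploits this. One point needs more care than you give it: in (ii), your counting argument ("fix two coordinates, let the third sweep") only shows the branch product is equidistributed over \emph{some} coset $g_0g_1g_2x_{n-1}L_{n-1}$; that this coset is $L_{n-1}$ itself is an extra fact, which the paper derives from $y_{n-1}^2\in L_{n-1}$ (Theorem \ref{TH 4.13}) and which in your framework is precisely the even-parity statement—so the parity description should be invoked already in (ii), not introduced only for (iii). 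Likewise in (iv), "equidistributed over $M_{n-1}$" should read "equidistributed over the single $L_{n-1}$-coset determined by parity" within each $H_n$-coset; that conditional statement, not unconditional equidistribution, is what makes the labels come out right, and your own parity mechanism supplies it.
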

\begin{remark}
    The decision to choose the left cosets is to simplify computations. It is easier to compute $(y_{n-1},y_{n-1},x_{n-1})(id,id,x_{n-1}),(0,1,2)$ than
    $(id,id,x_{n-1}),(0,1,2)(y_{n-1},y_{n-1},x_{n-1}).$ This is because we can operate componentwise with the former.
\end{remark}
\begin{proof} 
We will prove it by induction on $n.$ We suppose that $i),ii),iii),$ and $iv)$ are true for all $i\leq n-1$, and we prove it for $n.$ We also know from computations that the case $n=2.$
We begin with the proof of $i).$

\begin{enumerate}[i)]
    \item We realise that the data of applying the Markov model to $A_7^{(n)},$ gives $A_7^{(n)}=A_1^{(n-1)}\sqcup A_{2}^{(n-1)}\sqcup A_3^{n-1}$, which cycle data we know by induction hypothesis. Moreover the cycle data $A_1^{(n)}$ are equal to $A_7^{(n)}\times A_7^{(n)} \times A_7^{(n)},$ hence, using $A_7^{(n)}=CD(L_{n-1},L_{n-1})$ we get $A_1^{(n)}=CD(L_{n-1}\times L_{n-1}^{x_n}\times L_{n-1}^{x_n^2},L_n)=CD(H_n,L_n),$ as we wanted to prove. 

    \item We want to study the cycle data of $K_nx_n^i.$ We notice that $K_nx_{n}^i=H_nx_{n}^i\sqcup H_nz_{n}x_{n}^i\sqcup H_nz_{n}^{x_n}x_{n}^i\sqcup H_nz_{n}^{x_n^2}x_{n}^i.$ We compute for a general coset $g$ the element $H_ngx_n,$
    \begin{align*}
        &H_ngx_n=(L_{n-1}g_0,L_{n-1}g_1,g_2)((id,id,x_{n-1}),(0,1,2))=(L_{n-1}g_0,L_{n-1}g_1,L_{n-1}g_2x_{n-1}),(0,1,2).
    \end{align*}
    \begin{remark}
        We make a remark that it is going to be used several times. If we have an automorphism $\sigma$ acting as a $ 3-$cycle on the first level, then we know that our automorphism has a disjoint cycle decomposition into cycles of length multiples of $3.$ Then we can compute $\sigma^3$ and $\sigma^3_k$ would be the same for $k=0,1,2.$ This action is the same in the three parts of the tree given by the vertices on the first level. Moreover, the cycle data of each part is the same as that of $\sigma$, but the length of the cycles is divided by $3. $ We can visualise this in each of the disjoint cycles of $x_n,$ the fact that the action in the first level is $(0,1,2)$ means that the first corresponds to vertices of the form $0v$, the second to vertices of the form $1v$ and the third to $2v.$ We visualise this effect in the following diagram, given a cycle $x_n=(a_1,...,a_n),$ if it acts as a $3-$cycle in the first level, we know that $a_{i}$ for $i\equiv j(\mathrm{mod}3)$ belong to one of the three parts of the tree.
\begin{center}
\begin{tikzcd}[row sep=small, column sep=small]
    a_1 \arrow[r] & a_2 \arrow[r] & a_3 \arrow[r] & a_4 \arrow[r] & a_5 \arrow[r] & a_6 \arrow[r] & \cdots \arrow[r] & a_{n-3} \arrow[r] & a_{n-2} \arrow[r] & a_{n-1} \arrow[r] & a_n \\
    &&&&& x_n^3 \arrow[d] & & & & & \\
    &&&&& \phantom{x} & & & & & \\
    &&& a_1 \arrow[r] & a_4 \arrow[r] & \cdots \arrow[r] & a_{n-5} \arrow[r] & a_{n-2} & & & \\
    &&& a_2 \arrow[r] & a_5 \arrow[r] & \cdots \arrow[r] & a_{n-4} \arrow[r] & a_{n-1} & & & \\
    &&& a_3 \arrow[r] & a_6 \arrow[r] & \cdots \arrow[r] & a_{n-3} \arrow[r] & a_n & & &
\end{tikzcd}
\begin{tikzpicture}[remember picture, overlay]
    \node at (-10, -1.2) [anchor=east, scale=1.8] {$\Bigg\{$} ;
\end{tikzpicture}
\end{center}
Each of the disjoint cycles obtained has the same length, and we notice that each disjoint cycle obtained acts in one of the three parts of the tree. The same holds for arbitrary elements as computing the cube is made disjoint cycle by disjoint cycle.
    \end{remark}
    We then compute the third power of $H_ngx_n$ and obtain the following:
    \begin{align*}
        &(H_ngx_n)^3=(L_{n-1}g_0 L_{n-1}g_1L_{n-1}g_2x_{n-1},L_{n-1}g_1L_{n-1}g_2x_{n-1}L_{n-1}g_0,L_{n-1}g_2x_{n-1},L_{n-1}g_0L_{n-1}g_1). 
    \end{align*}
    Then the cycle data obtained would be: $$CD(H_ngx_n,L_n)=\frac{1}{12}tCD(g_0g_1g_2x_{n-1}L_{n-1},L_{n-1})=\frac{1}{12}tCD(g_0g_1g_2L_{n-1},L_{n-1}).$$
    For all cosets we are going to prove that $g_0g_1g_2\in L_{n-1}$ which would imply:
    $$CD(H_ngx_n,L_n)=t\frac{1}{12}CD(L_{n-1},L_{n-1})=\frac{1}{12}tA_7^{(n)}=\frac{1}{12}A_2^{(n)}.$$
We recall the cosets of $K_n/H_n:
    id=(id,id,id),z_n=(y_{n-1},y_{n-1},x_{n-1}),z_n^{x_n}=(y_{n-1},x_{n-1},y_{n-1}^{x_{n-1}}),z_n^{x_n^2}=(x_{n-1},y_{n-1}^{x_{n-1}},y_{n-1}^{x_{n-1}}).$
    Using the fact that $y_{n-1}^2\in L_{n-1}$ by Theorem \ref{TH 4.13} we deduce that $g_0g_1g_2\in L_{n-1}$ for all cases. 

    We do the same for cosets $H_ngx_n^2.
        H_ngx_n^2=(L_{n-1}g_0,L_{n-1}g_1,L_{n-1}g_2)(id,x_{n-1},x_{n-1}),(0,2,1)=
        (L_{n-1}g_0,L_{n-1}g_1x_{n-1},L_{n-1}g_2x_{n-1}),(0,2,1).$
    We compute the third power and obtain the following:
    \[
\resizebox{\textwidth}{!}{$
(H_n g x_n^2)^3=
(L_{n-1} g_0 L_{n-1} g_1 x_{n-1} L_{n-1} g_2 x_{n-1},
L_{n-1} g_1 x_{n-1} L_{n-1} g_2 x_{n-1} L_{n-1} g_0,
L_{n-1} g_2 x_{n-1} L_{n-1} g_0 L_{n-1} g_1 x_{n-1})
$}
\]
    We deduce that:
    $$CD(H_ngx_n^2,L_n)=\frac{1}{12}tCD(g_0g_1g_2x_{n-1}^2L_{n-1},L_{n-1})=\frac{1}{12}tCD(L_{n-1},L_{n-1})=\frac{1}{12}A_2^{(n)}.$$
    We rearrange all cosets and deduce:
    \begin{align*}
        CD(K_nx_n\sqcup K_nx_n^2,L_n)=CD(\sqcup_{i,j=0}^{2}H_nz_n^{x_n^j}x_n^i\sqcup H_n,L_n)=A_2^{(n)}.
    \end{align*}

    \item We study the cosets of $K_n/H_n.$ We recall $z_n=(y_{n-1},y_{n-1},x_{n-1}),$ moreover we know that $y_{n-1}L_{n-1}$ has the same cycle data as $A_{4}^{(n-1)}$ as that is true by induction and $(iv).$ We therefore deduce that
    $$CD((y_{n-1},y_{n-1},x_{n-1})(L_{n-1},L_{n-1},L_{n-1}),L_n)=\frac{1}{3}A_3^{(n)}.$$
    
    We go now with 
    $z_n^{x_n}H_n=(x_{n-1}L_{n-1},y_{n-1}L_{n-1},y_{n-1}^{x_{n-1}}L_{n-1}),$
    we notice that the second and third components, this is $y_{n-1}L_{n-1},y_{n-1}^{x_{n-1}}L_{n-1}$ have cycle data corresponding to the iterations of $[n,1]$ and therefore $CD(z_n^{x_n}H_n)=\frac{1}{3}A_3^{(n)}.$

    We finish with $z_n^{x_n^2}H_n=(y_{n-1}^{x_{n-1}}L_{n-1},x_{n-1}L_{n-1},y_{n-1}^{x_{n-1}}L_{n-1}).$
   The first and third coordinates have cycle data corresponding to iterations of $[n,1]$, which implies $CD(z_n^{x_n^2}H_n)=\frac{1}{3}A_3^{(n)}.$ Putting this together we have $A_3^{(n)}=CD(z_{n}H_n\sqcup z_{n
        }^{x_{n}}H_n\sqcup z_{n}^{x_n^2}H_n,L_n).$

    \item We aim to compute the cycle data for $ L_n y_n$. We will apply the model to the data $[n,1]$ and assign cosets to each of the possible types arising from it.The iteration of $[n,1],\frac{1}{2}$ is:
    $$[[n,2][s,1],\frac{1}{4}],[[n,1][s,2],\frac{1}{4}].$$
    We have that $A_4^{(n+1)}=A_5^{(n)}\sqcup A_6^{(n)}.$
    We can divide $L_ny_n$ into the cosets:
    $$L_{n}y_n=H_ny_n\sqcup_{i,j=0}^2H_nz_n^{x_n^i}x_n^jy_n.$$
    We expect that some of the cosets have cycle data corresponding to $A_4^{(n)}$, and others to $A_5^{(n)}.$ We distinguish three cases. The cosets $H_ngy_n,H_ngx_ny_n$ and $H_ngx_n^2y_n$ with $g$ a coset of $K_n/H_n.$ We compute $H_ngy_n,$
    \begin{align*}
        H_ngy_n=(L_{n-1}g_0,L_{n-1}g_1y_{n-1},L_{n-1}g_2x_{n-1}),(0,1).
    \end{align*}
  We observe two disjoint cycles: one corresponding to the first two coordinates and the other to the third coordinate. Moreover, if we compute the square, we will obtain the same cycle structure $C$ in the first two coordinates. Moreover, $dC$ has the same cycle structure as the first two coordinates of $H_ngy_n.$ We then compute the square and obtain:
    $$(H_ngy_n)^2=(L_{n-1}g_0L_{n-1}g_1y_{n-1},L_{n-1}g_1y_{n-1}L_{n-1}g_0L_{n-1},L_{n-1}g_2x_{n-1}L_{n-1}g_2x_{n-1}).$$
    Then we deduce that:
    $$CD(H_ngy_n,M_n)=\frac{1}{6}dCD(g_0g_1y_{n-1}L_{n-1},M_{n-1})\times CD(g_2L_{n-1},M_{n-1}).$$
    We consider the four cosets of $K_n/H_n$ and provide their cycle data:
    \begin{align*}
        &CD(H_nidy_n,M_n)=\frac{1}{6}dCD(y_{n-1}L_{n-1},M_{n-1})\times CD(L_{n-1},M_{n-1})=\frac{1}{12}A_5^{(n)},\\
        &CD(H_nz_ny_n,M_n)=\frac{1}{6}dCD(y_{n-1}y_{n-1}y_{n-1}L_{n-1},M_{n-1})\times CD(x_{n-1}L_{n-1},M_{n-1})=\frac{1}{12}A_5^{(n)},\\
        &CD(H_nz_n^{x_n}y_n,M_n)=\frac{1}{6}dCD(y_{n-1}x_{n-1}y_{n-1}L_{n-1},M_{n-1})\times CD(y_{n-1}^{x_{n-1}}L_{n-1},M_{n-1})=\frac{1}{12}A_4^{(n)},\\
        &CD(H_nz_n^{x_n}y_n,M_n)=\frac{1}{6}dCD(x_{n-1}y_{n-1}^{x_{n-1}}y_{n-1}L_{n-1},M_{n-1})\times CD(y_{n-1}^{x_{n-1}}L_{n-1},M_{n-1})=\frac{1}{12}A_4^{(n)}.
    \end{align*}
    We do the same for the cosets $H_ngx_ny_n.$ 
    \[
\resizebox{\textwidth}{!}{$
H_n g x_n y_n = ((L_{n-1} g_0, L_{n-1} g_1, L_{n-1} g_2 x_{n-1}), (0,1,2)) ((id, y_{n-1}, x_{n-1}), (0,1)) = (L_{n-1} g_0 y_{n-1}, L_{n-1} g_1 x_{n-1}^2, L_{n-1} g_2 x_{n-1}) (0)(1,2)
$}
\]
    We compute the square and obtain:
    \begin{align*}
        &(H_ngx_ny_n)^2=(L_{n-1}g_0y_{n-1}L_{n-1}g_0y_{n-1},L_{n-1}g_1x_{n-1}L_{n-1}g_2x_{n-1},L_{n-1}g_2x_{n-1}L_{n-1}g_1x_{n-1}),(0)(1,2).
    \end{align*}
We conclude that:
\begin{align*}
    CD(H_ngx_ny_n,M_n)=\frac{1}{6}dCD(g_1g_2L_{n-1})\times CD(g_0y_{n-1}L_{n-1}).
\end{align*}
We compute it for the different cosets of $  K_n/H_n:$
\begin{align*}
    &CD(H_nidx_ny_n,M_n)=\frac{1}{6}dCD(L_{n-1},M_{n-1})\times CD(y_{n-1}L_{n-1},M_{n-1})=\frac{1}{12}A_4^{(n)},\\
        &CD(H_nz_nx_ny_n,M_n)=\frac{1}{6}dCD(y_{n-1}x_{n-1}L_{n-1},M_{n-1})\times CD(y_{n-1}^2L_{n-1},M_{n-1})=\frac{1}{12}A_5^{(n)},\\
        &CD(H_nz_n^{x_n}x_ny_n,M_n)=\frac{1}{6}dCD(x_{n-1}y_{n-1}^{x_{n-1}}L_{n-1},M_{n-1})\times CD(y_{n-1}y_{n-1}L_{n-1},M_{n-1})=\frac{1}{12}A_5^{(n)},\\
        &CD(H_nz_n^{x_n}x_ny_n,M_n)=\frac{1}{6}dCD(y_{n-1}^{x_{n-1}}y_{n-1}^{x_{n-1}}L_{n-1},M_{n-1})\times CD(x_{n-1}y_{n-1}L_{n-1},M_{n-1})=\frac{1}{12}A_4^{(n)}.
\end{align*}
We finish with the cosets of the form $H_ngx_n^2y_n,$
\begin{align*}
    H_ngx_n^2y_n=(L_{n-1}g_0x_{n-1},L_{n-1}g_1x_{n-1},L_{n-1}g_2x_{n-1}y_{n-1}),(0,2).
\end{align*}
We compute the square:
\begin{align*}
    &(H_ngx_n^2y_n)^2=(L_{n-1}g_0x_{n-1}L_{n-1}g_2x_{n-1}y_{n-1},L_{n-1}g_1x_{n-1}L_{n-1}g_1x_{n-1},L_{n-1}g_2x_{n-1}y_{n-1}L_{n-1}g_0x_{n-1}).
\end{align*}
We get that:
\begin{align*}
    CD(H_ngx_n^2y_n,M_n)=\frac{1}{6}dCD(g_0g_2y_{n-1}L_{n-1},M_{n-1})\times CD(g_1L_{n-1}).
\end{align*}
We compute the values for the cosets,
\begin{align*}
    &CD(H_nidx_n^2y_n,M_n)=\frac{1}{6}dCD(y_{n-1}L_{n-1},M_{n-1})\times CD(L_{n-1},M_{n-1})=\frac{1}{12}A_5^{(n)},\\
        &CD(H_nz_nx_n^2y_n,M_n)=\frac{1}{6}dCD(y_{n-1}x_{n-1}y_{n-1}L_{n-1},M_{n-1})\times CD(y_{n-1}L_{n-1},M_{n-1})=\frac{1}{12}A_4^{(n)},\\
        &CD(H_nz_n^{x_n}x_n^2y_n,M_n)=\frac{1}{6}dCD(y_{n-1}y_{n-1}^{x_{n-1}}y_{n-1}L_{n-1},M_{n-1})\times CD(x_{n-1}L_{n-1},M_{n-1})=\frac{1}{12}A_5^{(n)},\\
        &CD(H_nz_n^{x_n}x_n^2y_n,M_n)=\frac{1}{6}dCD(x_{n-1}y_{n-1}^{x_{n-1}}y_{n-1}L_{n-1},M_{n-1})\times CD(y_{n-1}^{x_{n-1}}L_{n-1},M_{n-1})=\frac{1}{12}A_4^{(n)}.
\end{align*}
We regroup all cosets and realise that half of them yield cycle data equal to $A_4^{(n)}$ and the other half $A_5^{(n)} $. We get:
\begin{align*}
    CD(L_ny_n,M_n)=A_4^{(n+1)}.
\end{align*}
\end{enumerate}
\end{proof}
This allows us to propose the following models for $f_a-t.$
\begin{enumerate}
    \item Model 1: If $f_a-t$ is not irreducible and $f(\gamma_1)f(\gamma_2)-t$ is a square, then we have initial data
    $$[[s,1]^3,\frac{1}{4}],[[s,1][n,1]^2,\frac{3}{4}],$$
    therefore $(M_{f_a-t})_n=K_n.$
    \item Model 2: If $f_a-t$ is irreducible and $f(\gamma_1)f(\gamma_2)-t$ is a square we have initial data:
    $$[[s,3],\frac{2}{3}],[[s,1]^3,\frac{1}{12}],[[s,1][n,1]^2,\frac{1}{4}],$$
    and $(M_{f_a-t})=L_n.$
    \item Model 3: If $f_a-t$ is not irreducible and $f(\gamma_1)f(\gamma_2)-t$ is not a square then we have initial types:
    $$[[s,1]^3,\frac{1}{4}],[[s,1][n,1]^2,\frac{3}{4}],[[n,2][s,1],\frac{1}{4}],[[s,2][n,1],\frac{1}{4}],$$
    and we have $(M_{f_a-t})_n=<\negmedspace K_n,y_n\negmedspace >.$
    \item  Model 4: If $f_a-t$ is irreducible and $f(\gamma_1)f(\gamma_2)-t$ is not a square then we have initial types:
    $$[[s,3],\frac{1}{3}],[[s,1]^3,\frac{1}{24}],[[s,1][n,1]^2,\frac{1}{8}],[[n,2][s,1],\frac{1}{4}],[[s,2][n,1],\frac{1}{4}],$$
    we have $(M_{f_a-t})_n=M_n.$
    \end{enumerate}
We state the following regarding the models.
\begin{theorem}\label{Th 5.19}
    Let $f_a-t$ be a PCF polynomial of combined critical length one. Then there exist groups with the cycle data given by the Markov process of the cubic over $\mathbb{Q}(\sqrt{-3}).$ Moreover these groups are $M_{f_a-t}.$
\end{theorem}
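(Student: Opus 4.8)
The plan is to deduce the theorem from the four building-block identities of Theorem \ref{th 4.24}, treating it essentially as an assembly statement. Those identities already match each elementary coset family of the tower $H_n\unlhd K_n\unlhd L_n\unlhd M_n$ with one of the Markov cycle data $A_1^{(n)},A_2^{(n)},A_3^{(n)},A_4^{(n+1)}$. The two tools I would use throughout are that $CD(\,\cdot\,,S_2)$ is additive over disjoint unions of subsets of $S_2$, and that, directly from the formula $q_c=p_c\cdot|S_1|/|S_2|$, replacing the ambient group $S_2=G$ by a subgroup $G'$ with $S_1\subseteq G'\subseteq G$ rescales every frequency by the index $[G:G']$.

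First I would record the three coset decompositions
\[
K_n=H_n\sqcup z_nH_n\sqcup z_n^{x_n}H_n\sqcup z_n^{x_n^2}H_n,\qquad L_n=K_n\sqcup K_nx_n\sqcup K_nx_n^2,\qquad M_n=L_n\sqcup L_ny_n,
\]
which follow from $[K_n:H_n]=4$, $[L_n:K_n]=3$ (Theorem \ref{th 4.23}), $[M_n:L_n]=2$ (Theorem \ref{TH 4.13}), and $y_n^2=z_n\in K_n$. For Model 2 the group is $L_n$, and summing parts (i), (iii), (ii) of Theorem \ref{th 4.24} over $L_n=H_n\sqcup(z\text{-cosets})\sqcup K_nx_n\sqcup K_nx_n^2$ gives $CD(L_n,L_n)=A_1^{(n)}\sqcup A_3^{(n)}\sqcup A_2^{(n)}=A_7^{(n+1)}$, which is exactly the iterate of the initial datum $[s,1]$. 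For Model 1 the group is $K_n$: the irreducible-cubic case $A_2^{(n)}$ is excluded (it cannot occur when $f_a-t$ is reducible), and rescaling the surviving pieces by $[L_n:K_n]=3$ yields $CD(K_n,K_n)=3A_1^{(n)}\sqcup 3A_3^{(n)}$, whose total weights $1/4$ and $3/4$ are exactly the renormalized initial data of Model 1.

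For Model 4 the group is $M_n$; rescaling $CD(L_n,L_n)$ by $[M_n:L_n]=2$ and adjoining part (iv) gives
\[
CD(M_n,M_n)=\tfrac12\bigl(A_1^{(n)}\sqcup A_2^{(n)}\sqcup A_3^{(n)}\bigr)\sqcup A_4^{(n+1)},
\]
and one checks term by term that this matches the initial data $[s,3],[s,1]^3,[s,1][n,1]^2,[n,2][s,1],[s,2][n,1]$ with weights $1/3,1/24,1/8,1/4,1/4$, since $A_4^{(n+1)}$ is by definition the iterate of the two type-$n$ pieces $[n,2][s,1]$ and $[s,2][n,1]$. For Model 3 the group is $\langle K_n,y_n\rangle=K_n\sqcup K_ny_n$; here $K_ny_n=\bigsqcup_{g\in K_n/H_n}H_ngy_n$ is precisely the sub-family of $y_n$-cosets already evaluated line by line inside the proof of part (iv), so its cycle data is read off from those four computations and combined with the rescaled $CD(K_n,\,\cdot\,)$ exactly as in Model 1.

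The induction is anchored at the first level, where every group collapses to $S_3$ and the listed level-one data is verified by hand via Chebotarev, as recorded just before Definition \ref{Definition 4.9}. The main obstacle I anticipate is not conceptual but a delicate piece of bookkeeping: one must keep every normalization constant straight---the base factors such as $1/12$, the index rescalings $[G:G']$, and the conditional renormalizations forced in Models 1 and 3---so that the weights in each model sum correctly to the model probability, and one must confirm that the coset decompositions above are genuinely disjoint and exhaustive. In particular, for Model 3 this requires checking that $y_n$ normalizes $K_n$, so that $\langle K_n,y_n\rangle=K_n\sqcup K_ny_n$ really has index two over $K_n$. Once these normalizations are pinned down, each of the four models is a direct disjoint-union assembly of Theorem \ref{th 4.24}.
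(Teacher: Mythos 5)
Your proposal is correct and takes essentially the same route as the paper: Models 1, 2 and 4 are read off directly from Theorem \ref{th 4.24}, and Model 3 is handled by decomposing $\langle K_n,y_n\rangle = K_n\sqcup K_ny_n$ into $H_n$-cosets and reusing the computations of the cosets $H_ngy_n$ from the proof of part $iv)$, exactly as the paper does. Your write-up is in fact more explicit than the paper's own two-line proof about the coset decompositions, the index rescalings of the $CD$ frequencies, and the check that $y_n$ normalizes $K_n$, all of which the paper leaves implicit.
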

\begin{proof}
    The proof of Model $1,2,4$ is given in Theorem $\ref{th 4.24}.$ For Model $3$ we look at the definition of $K_n$ in $\ref{th 4.23}$ and recall that the cosets of $K_n/H_n$ are $z_n^{x_n^i},i=0,1,2$ and $id.$  We now go to the proof of Theorem $\ref{th 4.24}$ part $iv).$ We know that $CD(y_nz_nH_n\sqcup y_nH_n,(M_{f_a-t})_n)=\frac{1}{4}A_5^{(n)}$ and $CD(y_nz_{n}^{x_n}\sqcup y_nz_n^{x_n^2},(M_{f_a-t})_n)=\frac{1}{4}=A_6^{(n)}$ which proves that $(M_{f_a})_n$ follows Model $3.$ 
\end{proof}
\subsection{Hausdorff dimension}\label{3.2}
Hausdorff dimension of profinite groups has been studied since their introduction by Abercrombie in \cite{Abercrombie_1994}. We recall the definition.
\begin{definition}\label{def 3.20}
    Given a profinite group $G:=\smash{\underset{n}{\varprojlim} }G_n,$ and a subgroup $H:=\smash{\underset{n}{\varprojlim}} H_n$ we define the Hausdorff $\emph{dimension}$ of $H$ as a subset of $G$ as:
    $$\mathrm{hdim}_{G_n}(H)=\liminf_{n\to \infty} \frac{\log|H_n|}{\log|G_n|}.$$ 
\end{definition}
When the limit exists, Definition $\ref{def 3.20}$ coincides with the limit, and this holds for our groups.
We compute the size of $M_n$ to obtain the Hausdorff dimension.
\begin{theorem}\label{size M}
    The size of $M_n\subset \mathrm{Aut}(\mathcal{T}_n)$ is $|M_n|=3^{\frac{3^n-1}{2}}2^{3^{n-1}}.$
\end{theorem}
\begin{proof}
    We know that $[L_n:H_n]=12$ and $|H_n|=|L_{n-1}|^3.$ Moreover $[M_n:L_n]=2.$ We use induction, the formula holds for $n=1$ and assume that $|M_n|=3^{\frac{3^n-1}{2}}2^{3^{n-1}}$ for $n$ and we prove it for $n+1.$ We know that $H_{n+1}=(3^{\frac{3^n-1}{2}}2^{3^{n-1}-1})^3=3^{\frac{3^{n+1}-3}{2}}2^{3^{n}-3}$ which implies $|M_{n+1}|=24|H_{n+1}|=3^{\frac{3^{n+1}-1}{2}}2^{3^{n}}$ as we wanted to prove.
\end{proof}
We now do the case of $\mathrm{Aut}(\mathcal{T}_n).$
\begin{theorem}\label{4.32}
    The size of $\mathrm{Aut}(\mathcal{T}_n)$ is $|\mathrm{Aut}(\mathcal{T}_n)|=3^{\frac{3^n-1}{2}}2^{\frac{3^n-1}{2}}.$
\end{theorem}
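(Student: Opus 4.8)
The plan is to compute the order of $\mathrm{Aut}(\mathcal{T}_n)$ directly by induction on $n$, using the recursive wreath-product structure of the automorphism group of the tree. The key structural fact, already recorded in the excerpt, is the isomorphism $\mathrm{Aut}(\mathcal{T}_n) \cong \mathrm{Aut}(\mathcal{T}_{n-1})^3 \rtimes S_3$. Since the order of a semidirect product is the product of the orders of the two factors, this immediately yields the recursion $|\mathrm{Aut}(\mathcal{T}_n)| = |S_3| \cdot |\mathrm{Aut}(\mathcal{T}_{n-1})|^3 = 6 \cdot |\mathrm{Aut}(\mathcal{T}_{n-1})|^3$.

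First I would establish the base case $n=1$: here $\mathrm{Aut}(\mathcal{T}_1) \cong S_3$, so $|\mathrm{Aut}(\mathcal{T}_1)| = 6 = 3^1 \cdot 2^1$, which matches the claimed formula $3^{\frac{3^1-1}{2}}2^{\frac{3^1-1}{2}} = 3^1 2^1$. Then, assuming $|\mathrm{Aut}(\mathcal{T}_{n})| = 3^{\frac{3^n-1}{2}}2^{\frac{3^n-1}{2}}$, I would substitute into the recursion to obtain the order at level $n+1$. Writing $e_n = \frac{3^n-1}{2}$ for the common exponent, the inductive step reduces to the arithmetic identity $1 + 3e_n = e_{n+1}$ for both the power of $3$ and the power of $2$ coming from the factor of $6 = 2 \cdot 3$. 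Indeed $3 \cdot \frac{3^n-1}{2} + 1 = \frac{3^{n+1}-3}{2}+1 = \frac{3^{n+1}-1}{2} = e_{n+1}$, which closes the induction for both prime exponents simultaneously.

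I expect no real obstacle here: the whole argument is the observation that both the prime $2$ and the prime $3$ contribute with the same exponent $e_n = \frac{3^n-1}{2}$, a geometric-series count reflecting that at each level one distributes a copy of $S_3$ over the $3^{n-1}$ vertices of the previous level (equivalently, $|\mathrm{Aut}(\mathcal{T}_n)| = |S_3|^{1 + 3 + \cdots + 3^{n-1}} = 6^{\frac{3^n-1}{2}}$). The only point requiring the briefest care is verifying that the exponent recursion $1 + 3e_n = e_{n+1}$ holds, which is the routine geometric-sum computation displayed above. This result, together with Theorem \ref{size M}, is exactly what is needed to read off the Hausdorff dimension of $M$ as the limit $\lim_{n\to\infty} \frac{\log|M_n|}{\log|\mathrm{Aut}(\mathcal{T}_n)|}$ from Definition \ref{def 3.20}.
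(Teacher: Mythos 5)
Your proposal is correct and follows essentially the same route as the paper: both proofs use the recursion $|\mathrm{Aut}(\mathcal{T}_n)| = 6\,|\mathrm{Aut}(\mathcal{T}_{n-1})|^3$ coming from the wreath-product decomposition $\mathrm{Aut}(\mathcal{T}_n)\cong \mathrm{Aut}(\mathcal{T}_{n-1})^3 \rtimes S_3$, with base case $|\mathrm{Aut}(\mathcal{T}_1)|=|S_3|=6$ and the geometric-sum exponent check $3\cdot\frac{3^n-1}{2}+1=\frac{3^{n+1}-1}{2}$. Your closing observation that the order is simply $6^{\frac{3^n-1}{2}}$ is a pleasant repackaging but adds nothing beyond the paper's induction.
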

\begin{proof}
    Since $\mathrm{Aut}(\mathcal{T}_n)\cong \mathrm{Aut}(\mathcal{T}_{n-1})\times \mathrm{Aut}(\mathcal{T}_{n-1})\times \mathrm{Aut}(\mathcal{T}_{n-1})\rtimes S_3,$ we know that if $a_n=|\mathrm{Aut}(\mathcal{T}_n)|,$ then $a_n=6a_{n-1}^3.$ We use induction: it is true for $n=1$ and assuming is true for $n$ we see that $a_{n+1}=6(3^{\frac{3^n-1}{2}}2^{\frac{3^n-1}{2}})^3=3^{\frac{3^{n+1}-1}{2}}2^{\frac{3^{n+1}-1}{2}}.$
\end{proof}
With both results, we can conclude that:
\begin{theorem}
    The Hausdorff dimension of $M_n\subseteq \mathrm{Aut}(\mathcal{T}_n)$ is:
    \[
\resizebox{\textwidth}{!}{$
\mathrm{hdim}_{\mathrm{Aut}(\mathcal{T}_n)}(M_n)
= \mathrm{hdim}_{\mathrm{Aut}(\mathcal{T}_n)}(H_n)
= \mathrm{hdim}_{\mathrm{Aut}(\mathcal{T}_n)}(K_n)
= \mathrm{hdim}_{\mathrm{Aut}(\mathcal{T}_n)}(\langle K_n, y_n \rangle)
= 1 - \frac{1}{3 \log_2(6)} \approx 0.871\ldots
$}
\]
\end{theorem}
\begin{proof}
    We use logarithm in base $6.$ Therefore $\log_6(|\mathrm{Aut}(\mathcal{T}_n)|)=\frac{3^{n}-1}{2}.$ On the other hand $\log_6(|M_n|)=\log_6(|M_n|2^{\frac{3^{n-1}-1}{2}}/2^{\frac{3^{n-1}-1}{2}})=\log_6(|M_n|2^{\frac{3^{n-1}-1}{2}})-\log_6(2^{\frac{3^{n-1}-1}{2}}))=\log_6(3^{\frac{3^n-1}{2}}2^{\frac{3^n-1}{2}})-\frac{\log_2(2^{\frac{3^{n-1}-1}{2}}))}{\log_2(6)}=\frac{3^{n}-1}{2}-\frac{3^{n-1}-1}{2\log_2(6)}.$ We get:
    $$\lim_{n\to \infty} \frac{\log|M_n|}{\log|\mathrm{Aut}(\mathcal{T}_n)|}=\lim_{n\to \infty}\frac{\frac{3^{n}-1}{2}-\frac{3^{n-1}-1}{2\log_2(6)}}{\frac{3^n-1}{2}}=1-\frac{1}{3\log_2(6)}.$$
    For the other subgroups, the same holds, since all are of finite index in $M_n.$
\end{proof}
\section{Markov groups for polynomials with combined critical orbit of length 2}\label{4}
\subsection{Constructing the group}
We can try to push our methods to a combined critical orbit of size two; as noted, in some cases we can restrict the set of possible descendants. Computational evidence shows that all descendants occur for certain polynomials, in particular, for $f_a=2(z+a)^3-3(z+a)^2+\frac{1}{2}-a-t.$ Precisely the families of combined critical orbit of length two; $f_a=2(z+a)^3-3(z+a)^2+\frac{1}{2}-a-t,-2(z+a)^3+3(z+a)+\frac{1}{2}-a-t,-\frac{1}{4}(z+a)^3+\frac{3}{2}(z+a)+2-a-t,-\frac{1}{28}(z+a)^3-\frac{3}{4}(z+a)+\frac{7}{2}-a-t,$ all have combined critical orbit of length two.

     We compute first-level data of $f_a-t$ in the case where most level one types are possible(in the other cases the types are subset of this types):
    \begin{align*}
        &[[ss,3],\frac{1}{12}],[[ns,3],\frac{1}{12}],[[sn,2],[ss,1],\frac{1}{16}],[[ss,2],[sn,1],\frac{1}{16}],[[nn,2]\\&[ss,1],\frac{1}{16}],
        [[ss,2],[nn,1],\frac{1}{16}],[[ns,2][nn,1],\frac{1}{16}],[[nn,2],[ns,1],\frac{1}{16}],\\&[[ns,2],[sn,1],\frac{1}{16}],
        [[sn,2],[ns,1],\frac{1}{16}],[[ss,1]^3,\frac{1}{192}],[[ns,1]^2[ss,1],\frac{1}{64}],\\&[[sn,1]^2[ss,1],\frac{1}{64}],[[nn,1]^2[ss,1],\frac{1}{64}],[[ns,1][sn,1],[nn,1],\frac{1}{32}],
        \\&[[ns,1]^3,\frac{1}{192}],
        [[ns,1][ss,1]^2,\frac{1}{64}],[[ns,1][nn,1]^2,\frac{1}{64}],[[ns,1][sn,1]^2,\frac{1}{64}].
    \end{align*}

    We have the dynamics described in Example \ref{Example 4.13}. We can restrict the dynamics to the following ones:
    \begin{align*}
        &[ss,k]\to [ss,3k]\\
        &[nn,k]\to [nn,2k][ss,k]\\
        &[sn,k]\to [ns,3k]\\
        & [ns,k]\to [ns,2k][nn,k]
    \end{align*}
    Using Chebotarev with the first-level data, we obtain the group $S_3$, which is generated by the following elements $<\negmedspace (0,1,2),(0,1)\negmedspace>.$ Unlike in Section \ref{3}, we attach two types to an element. We attach to $(0,1,2)$ the type $[ss,3],$ to $(0,1)$ we attach $[nn,2][ss,1]$ and $[sn,2][ss,1],$ finally to $id$ we attach $[nn,1]^2[ss,1]$ and $[ns,1]^2[ss,1]$. We iterate these elements and we name the iterations of $[ss,3],$ $x_n;$ the iterations of $[nn,2][ss,1],$ $y_n;$ the iteration of $[ss,1][nn,1]^2,z_n;$ the iterations of $[sn,2][ss,1]$ $l_n;$ and the ones of $[ns,1]^2[ss,1],$ $k_n.$ 
    \begin{remark}
           We recall that $x_n,y_n,z_n$ have been constructed as in Section \ref{3}, and therefore, every result related to those elements still holds in this context. 
    \end{remark}
We propose the following groups to help us prove our model,
\begin{align*}
    M_n&=<\negmedspace L_n,y_n,l_n\negmedspace>,\\
    L_n&=<\negmedspace x_n,z_n,k_n,L_{n-1}\negmedspace>.
\end{align*}

   \begin{theorem}\label{4.38}
    $M_n$ has cycle data corresponding to the full initial data for a polynomial $f_a$ of combined critical orbit of length 2. $L_n$ has cycle data corresponding to iterate $n$ times the data $[ss,1]^3.$ 
    \end{theorem}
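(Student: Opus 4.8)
The plan is to prove Theorem \ref{4.38} by induction on $n$, mirroring the structure of the length-one case (Theorem \ref{th 4.24}) but now tracking the length-two labels. First I would set up the analogue of the cycle-data notation $A_i^{(n)}$: for each of the nineteen first-level types appearing in the full initial data above, I introduce a symbol for the cycle data obtained by iterating the length-two Markov process $n-1$ times. The restricted dynamics on the pure states $[ss,k]\to[ss,3k]$, $[nn,k]\to[nn,2k][ss,k]$, $[sn,k]\to[ns,3k]$, and $[ns,k]\to[ns,2k][nn,k]$ are exactly the generator rules attached to $x_n,y_n,z_n,l_n,k_n$, so the base case $n=1$ follows from the explicit first-level data and a Chebotarev count showing the generated group is $S_3$. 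The heart of the argument is the inductive step, where the claim $L_n$ realizes the iteration of $[ss,1]^3$ reduces, via the decomposition $H_n=L_{n-1}\times L_{n-1}^{x_n}\times L_{n-1}^{x_n^2}$ (Theorem \ref{th 4.22}) and Proposition \ref{prop3.14}, to understanding the cosets of $L_n$ modulo $H_n$.

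For the statement about $L_n$, I would reuse the coset bookkeeping from the proof of Theorem \ref{th 4.24}: since $x_n,z_n$ behave identically to Section \ref{3} (by the remark preceding the theorem), the cosets $H_n, K_nx_n\sqcup K_nx_n^2$, and $z_nH_n\sqcup z_n^{x_n}H_n\sqcup z_n^{x_n^2}H_n$ contribute the tripling, irreducible-$ss$, and splitting branches exactly as before, now carrying the second label coordinate through the square/cube computations. The new generator $k_n$, attached to $[ns,1]^2[ss,1]$, must be shown to produce precisely the descendants predicted by the $[ns,k]\to[ns,2k][nn,k]$ rule together with the combinatorial splitting; this requires computing $(H_n g k_n)^2$ componentwise and reading off the doubled cycle structure in the first two coordinates against the single cycle in the third, just as was done for $y_n$ in part $(iv)$.

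For the statement about $M_n$, I would write $M_n=\langle L_n, y_n, l_n\rangle$ and decompose $M_n$ into the cosets $L_n, L_ny_n, L_nl_n$, and their products, assigning to each the first-level type whose iteration it realizes. The elements $y_n$ and $l_n$ carry the two distinct order-two types $[nn,2][ss,1]$ and $[sn,2][ss,1]$, and the key computation is that conjugating/multiplying by $l_n$ correctly sends an $sn$ label to an $ns$ label under squaring (reflecting the rule $[sn,k]\to[ns,3k]$, which notably \emph{tripling}-like behaves unlike $y_n$). I expect the main obstacle to be verifying that the nineteen first-level types distribute correctly across the cosets of $M_n/H_n$ with the right frequencies $\tfrac{1}{192},\tfrac{1}{64},\tfrac{1}{32},\tfrac{1}{16},\tfrac{1}{12}$: one must check that each coset $H_n g$ lands in exactly one type class and that the counts aggregate to the displayed probabilities, which is delicate because the two label-coordinates are coupled through equation (2.2) and the $sn/ns$ asymmetry breaks the symmetry that simplified Section \ref{3}. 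Once the coset-by-coset cycle data are tabulated and summed, matching them against the full initial data and the iterated $[ss,1]^3$ data respectively completes the induction.
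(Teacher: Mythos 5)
Your overall strategy is indeed the route the paper takes: Theorem \ref{4.38} is not proved in isolation but through the induction that becomes Theorem \ref{4.44} --- the $A_i^{(n)}$ cycle-data bookkeeping, the decomposition $H_n=L_{n-1}\times L_{n-1}^{x_n}\times L_{n-1}^{x_n^2}$ combined with Proposition \ref{prop3.14}, cube computations for cosets acting as $3$-cycles on the first level, square computations for cosets acting as transpositions, and reuse of the Section \ref{3} facts for $x_n,y_n,z_n$. Your coset census for $M_n$ over $L_n, L_ny_n, L_nl_n, L_ny_nl_n$ also matches the paper's parts vii)--ix).

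However, two steps would fail as written. First, your treatment of $k_n$ is concretely wrong: $k_n$ is attached to $[ns,1]^2[ss,1]$, whose level-one permutation is trivial, and by Lemma \ref{Lemma 4.40} one has $k_n=(l_{n-1},l_{n-1},x_{n-1})$, so the cosets $k_nH_n$, $k_n^{x_n}H_n$, $k_nz_nH_n$, etc., all stabilize the first level. There is no square $(H_ngk_n)^2$ to compute and no merged cycle ``in the first two coordinates against the third''; their cycle data is read off componentwise as a product, with $CD(k_nH_n,L_n)$ following from the induction hypothesis $CD(l_{n-1}L_{n-1},M_{n-1})=A_{10}^{(n)}$ --- the doubling in the rule $[ns,k]\to[ns,2k][nn,k]$ occurs one level down, inside $l_{n-1}=(id,l_{n-2},y_{n-2})(0,1)$, not at the top of $k_n$. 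Squaring is reserved for the $y_n$-, $l_n$-, and $y_nl_n$-cosets of $M_n$, which genuinely act as transpositions. Second, you cannot simply cite Theorem \ref{th 4.22}: the group $L_n$ of Section \ref{4} contains the extra generator $k_n$, so normality of $L_{n-1}\times L_{n-1}^{x_n}\times L_{n-1}^{x_n^2}$ must be re-verified against conjugation by $k_n$, and the index is $[L_n:H_n]=48$ rather than $12$, with $K_n/H_n\cong(\mathbb{Z}/2\mathbb{Z})^4$ (sixteen cosets generated by $z_n,k_n$ and their $x_n$-conjugates, requiring the new relations $k_nk_n^{x_n}=k_n^{x_n^2}$ and the analogous ones for $z_nk_n$); the paper re-proves all of this. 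Relatedly, the induction base is not the level-one Chebotarev count: the paper's statement holds for $n\geq 3$ and the small cases are checked by direct computation, since the structural facts above only kick in from that point.
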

This gives us a line of action similar to that applied in Section $\ref{3}.$ We expect $L_n\subset M_n$ of index $4,$ since the data coming from $[ss,1]$ is $\frac{1}{4}$ of the initial data. Moreover, we expect $L_n$ to have self-similar properties. More precisely, we expect $L_{n-1}\times L_{n-1}\times L_{n-1}\subset L_n$ since that is what happens with iterations of $[ss].$ As the first iteration of $[ss,1]$ contains $[ss,1]^3$, this means that the cycle data of applying the model $n$ times to $[ss,1]$ contains the cycle data of applying the model $n-1$ times to $[ss,1]^3.$ Furthermore, the index should be $48$, as suggested by the model. This index computation follows from the fact that $[ss,3]$ has a probability $\frac{1}{12}$ and $[ss,1]^3$ of $\frac{1}{192}$ then the relative probability is $\frac{12}{192}=\frac{1}{48}$ and hence the index is $48.$

We now prove a result about the relation between $M_n$ and $L_n.$ As we have said, the model suggests that the index is $4,$ and we have the following theorem.
\begin{theorem}\label{th 4.39}
    Let $M_n$ and $L_n$ as in Theorem $4.38.$ Then the quotient satisfies $M_n/L_n=V_4.$
\end{theorem}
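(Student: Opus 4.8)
The plan is to show that $L_n \unlhd M_n$, that the quotient is generated by the two involutions $\bar y_n,\bar l_n$ which moreover commute (so $M_n/L_n$ is a quotient of $V_4$), and finally to exhibit a surjection $M_n/L_n \twoheadrightarrow V_4$; comparing the two then forces $M_n/L_n \cong V_4$. As in Theorem \ref{th 4.24} one takes $n\geq 2$ here; for $n=1$ one only has $M_1/L_1 = S_3/A_3 = \mathbb{Z}/2$, since $y_1$ and $l_1$ coincide as the level-one permutation $(0,1)$ and the distinction between them lives below level one.

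For the surjection I would use the two sign characters $\epsilon_i\colon \mathrm{Aut}(\mathcal{T}_n)\longrightarrow \mathbb{Z}/2$, $\epsilon_i(\sigma)=\mathrm{sgn}\big(\sigma|_{\text{level }i}\big)$ for $i=1,2$, each being the composite of the level-$i$ action $\mathrm{Aut}(\mathcal{T}_n)\to S_{3^i}$ with the sign map and hence a homomorphism. First I would check that $\epsilon_1,\epsilon_2$ vanish on $L_n$: on $x_n$ (a level-one three-cycle and a level-two nine-cycle, both even), on $z_n=(y_{n-1},y_{n-1},x_{n-1})$ and on $k_n$ (trivial on level one, with level-two cycle type $(3,2,2,1,1)$, even), and on $L_{n-1}$ (embedded as $(\sigma,\mathrm{id},\mathrm{id})$, so that $\epsilon_2=\epsilon_1(\sigma)=0$ by induction); since the $\epsilon_i$ are homomorphisms they then vanish on all of $L_n$. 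Next I would read the values on the coset representatives from their level-two cycle types: the doubled two-cycle of $y_n$ gives $[nn,4][ss,2]$ alongside $[ss,3]$, i.e.\ level-two type $(4,3,2)$, whence $\epsilon_1(y_n)=1$, $\epsilon_2(y_n)=0$; whereas the tripled two-cycle of $l_n$ gives $[ns,6]$ alongside $[ss,3]$, i.e.\ level-two type $(6,3)$, whence $\epsilon_1(l_n)=1$, $\epsilon_2(l_n)=1$. Thus $(\epsilon_1,\epsilon_2)$ sends $y_n\mapsto(1,0)$, $l_n\mapsto(1,1)$ and $y_nl_n\mapsto(0,1)$, three distinct nonzero elements, so it is onto $V_4$ and kills $L_n$; in particular $y_n,l_n,y_nl_n$ lie in three distinct nontrivial cosets of $L_n$.

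It remains to bound the quotient from above. From Section \ref{3} we already have $y_n^2=z_n\in L_n$. By the same inductive scheme I would establish the recursive wreath description of $l_n$ (and of $k_n$) and deduce $l_n^2\in L_n$ together with $[y_n,l_n]\in L_n$; normality $L_n\unlhd M_n$ I would likewise obtain by checking that the conjugates of $x_n,z_n,k_n$ and of $L_{n-1}$ by $y_n$ and by $l_n$ remain in $L_n$, exactly as in the proof of Theorem \ref{TH 4.13}. Granting these, $\bar y_n$ and $\bar l_n$ are commuting involutions generating $M_n/L_n$, so $|M_n/L_n|\leq 4$; combined with the surjection onto $V_4$ this yields $M_n/L_n\cong V_4$.

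The main obstacle is the explicit recursive handling of $l_n$. Unlike $y_n$, whose distinguished cycle doubles at every step, the two-cycle of $l_n$ first triples (the label $sn$ has leading letter $s$) and only afterwards doubles (its image $ns$ has leading letter $n$), so the clean identity $l_n^2=k_n$ analogous to $y_n^2=z_n$ fails: already at level two $l_2^2$ has cycle type $(3,3,3)$ while $k_2$ has type $(3,2,2,1,1)$. Consequently the recursion for $l_n$ must be set up by hand and the memberships $l_n^2\in L_n$ and $[y_n,l_n]\in L_n$ checked componentwise in the decomposition $\mathrm{Aut}(\mathcal{T}_n)\cong\mathrm{Aut}(\mathcal{T}_{n-1})^3\rtimes S_3$; this is precisely the place where the length-two case genuinely departs from Section \ref{3}.
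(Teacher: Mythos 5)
Your strategy has the same skeleton as the paper's proof (normality of $L_n$ deferred to computations as in Theorem \ref{TH 4.13}; $y_n^2,l_n^2\in L_n$; a count of cosets), but your mechanism for separating the cosets is genuinely different and cleaner. The paper shows $y_n,l_n\notin L_n$ by restricting to $\mathcal{T}_1$ and then asserts $l_n\notin\langle L_n,y_n\rangle$ by an unspecified inspection of restrictions to $\mathcal{T}_2$; you replace both steps by the explicit homomorphism $(\epsilon_1,\epsilon_2)\colon M_n\to(\mathbb{Z}/2)^2$ built from the level-one and level-two sign characters, check that it kills the generators of $L_n$, and read off $y_n\mapsto(1,0)$, $l_n\mapsto(1,1)$. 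This makes the lower bound $[M_n:L_n]\geq 4$ a one-line verification on generators, and it is essentially the same $\mathrm{sgn}$-type character that the paper only introduces later, in Remark \ref{remark 5.2}. Your observation that the statement requires $n\geq 2$ (at $n=1$ one has $y_1=l_1=(0,1)$ and $M_1/L_1\cong\mathbb{Z}/2$) is correct and absent from the paper.

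There is, however, a genuine gap, and it is the one you yourself flag but never close: neither you nor the paper proves that $\bar y_n$ and $\bar l_n$ commute in $M_n/L_n$. The paper announces ``and are commutative'' and never returns to it; you list $[y_n,l_n]\in L_n$ among the memberships ``to be checked componentwise'' and stop there. This is not cosmetic: normality plus two involutions generating the quotient only makes $M_n/L_n$ dihedral, and your surjection onto $V_4$ only forces the dihedral order to be divisible by $4$, so quotients of order $8,12,\dots$ are not yet excluded. Fortunately the check is short once one uses the recursions the paper actually works with: from $y_n=(id,y_{n-1},x_{n-1})(0,1)$ and $l_n=(id,l_{n-1},y_{n-1})(0,1)$ one gets $y_nl_n=(l_{n-1},y_{n-1},x_{n-1}y_{n-1})$ and $l_ny_n=(y_{n-1},l_{n-1},y_{n-1}x_{n-1})$, hence $y_nl_n(l_ny_n)^{-1}=(a,a^{-1},c)$ with $a=l_{n-1}y_{n-1}^{-1}$ and $c=x_{n-1}y_{n-1}x_{n-1}^{-1}y_{n-1}^{-1}\in L_{n-1}$ by normality one level down. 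By Lemma \ref{Lemma 4.40}, $k_nz_n^{-1}=(a,a,id)$, so $(a,a^{-1},c)=k_nz_n^{-1}\cdot(id,a^{-2},c)$, and $a^{2}\in L_{n-1}$ because $M_{n-1}/L_{n-1}\cong V_4$ by your induction hypothesis; thus the commutator lies in $k_nz_n^{-1}H_n\subseteq L_n$ and the induction closes.

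Finally, a definitional wrinkle you inherited from the paper: your concrete computations for $l_n$ (level-two type $(6,3)$, $l_2^2$ of type $(3,3,3)$, the claim that no clean recursion exists) follow the text of Section \ref{4}, where $l_n$ is the iteration of $[sn,2][ss,1]$. But the paper's own proof of this theorem, Lemma \ref{Lemma 4.40}, and Theorem \ref{4.44} all take $l_n$ to be the iteration of $[ns,1]$, whose recursion is exactly $l_n=(id,l_{n-1},y_{n-1})(0,1)$ and whose level-two cycle type is $(4,2,2,1)$. Your character argument survives either reading, since both types are odd permutations of the nine level-two vertices, so $\epsilon_2(l_n)=1$ in both cases; but under the paper's working definition the ``main obstacle'' you describe largely dissolves: the recursion is already supplied by Lemma \ref{Lemma 4.40}, and $l_n^2=(l_{n-1},l_{n-1},y_{n-1}^2)=k_n\cdot(id,id,x_{n-1}^{-1}y_{n-1}^2)\in L_n$ is immediate.
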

First we prove a result relating $l_n$ with elements in $L_n.$

\begin{lemma}\label{Lemma 4.40}
    Let $k_n,l_n,x_n$ as before. We have the formula:
    $$k_{n}=(l_{n-1},l_{n-1},x_{n-1})$$
\end{lemma}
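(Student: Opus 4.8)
The plan is to argue by induction on $n$, mirroring the proof of the identity $z_n=(y_{n-1},y_{n-1},x_{n-1})$ given in Section \ref{3}; indeed, by the remark preceding the statement, $x_n,y_n,z_n$ behave exactly as before, and here $l_n$ and $x_n$ take over the roles played there by $y_n$ and $x_n$. For the base case I would check $n=1$, where $k_1=\mathrm{id}=(\mathrm{id},\mathrm{id},\mathrm{id})$ matches $(l_0,l_0,x_0)$ once seed types are attached: the three level-one cycles of $k_1$ carry the types $ns,ns,ss$ coming from the seed $[ns,1]^2[ss,1]$, and these are the seed types of $l_0,l_0,x_0$. Equivalently, one can take $n=2$ as the first genuine case and verify directly that doubling the two fixed points and tripling the remaining one in $k_1$ yields $((0,1),(0,1),(0,1,2))=(l_1,l_1,x_1)$, using that, by Definition \ref{def 5.2}, doubling a fixed point inserts the transposition $(0,1)$ and tripling inserts $(0,1,2)$ in the corresponding subtree.

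The inductive step is the structural heart. Assume $k_{n-1}=(l_{n-2},l_{n-2},x_{n-2})$. The key observation is that $k_{n-1}$ fixes the first level of the tree: this holds for $k_1$, and it is preserved under iteration because the doubling and tripling maps are of the form $i_{\{s_i\}}$ (Definition \ref{def 4.16}), which only alter the action on the newly adjoined bottom level and leave the top-level permutation unchanged. Hence the disjoint-cycle decomposition of $k_{n-1}$ is the disjoint union of the cycle decompositions of its three sections, each contained entirely in one of the subtrees rooted at level one. Iterating $k_{n-1}$ then applies the doubling/tripling operation independently within each subtree, so that $k_n=\bigl(\,\mathrm{iterate}(l_{n-2}),\ \mathrm{iterate}(l_{n-2}),\ \mathrm{iterate}(x_{n-2})\,\bigr)=(l_{n-1},l_{n-1},x_{n-1})$, which is the claimed identity.

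The step that requires genuine care, and which I expect to be the main obstacle, is confirming that the type labels attached to the cycles of $k_{n-1}$ inside each subtree coincide with the seed-propagated labels defining the standalone iterations $l_{n-1}=\mathrm{iterate}(l_{n-2})$ and $x_{n-1}=\mathrm{iterate}(x_{n-2})$; only when these labels agree do the doubling-versus-tripling choices match and the sectionwise identity go through. I would handle this by carrying the type data alongside the induction, showing that the label seen by the subtree-$0$ section of $k_{n-1}$ reproduces the label of $l_{n-2}$ (and likewise for $x$ in subtree $2$), using the shift description of the action of $f$ on labels (Definition \ref{def 4.8}) together with the restricted dynamics recorded before the statement. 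The $x$-component is immediate, since the label $ss$ is fixed by the shift and always triples; the delicate part is the $l$-component, where one must verify that the descendants chosen in the doubling step are precisely those making the subtree labels match the seed $[sn,2][ss,1]$ of $l$. Once this bookkeeping is pinned down at the base level, the inductive type-matching is automatic and the group-element identity $k_n=(l_{n-1},l_{n-1},x_{n-1})$ follows.
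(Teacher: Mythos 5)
Your structural argument---induction on $n$, using that $k_{n-1}$ fixes the first level so that iteration acts independently on the three sections---is exactly the content of the paper's (very terse) proof: the paper simply observes that $k_n$ is the $(n-1)$-fold iteration of the identity carrying the labels $[ns,1]^2[ss,1]$, that iteration proceeds subtree by subtree, and that the $ns$-labelled fixed points therefore produce the iterations of $[ns,1]$, which is $l_{n-1}$, while the $ss$-labelled one produces $x_{n-1}$. So on the group-theoretic skeleton you and the paper agree.

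The gap lies in the step you yourself flag as the main obstacle and then defer: matching the labels carried by the sections of $k_{n-1}$ with the seed labels of $l$. You anchor this to the seed $[sn,2][ss,1]$, which is what the prose of Section \ref{4} attaches to $l_n$, and with that seed the verification you postpone is actually false: doubling the $ns$-labelled fixed points of $k_1$ produces sections labelled $[ns,2][nn,1]$, and under the restricted dynamics these labels \emph{double} at the next step ($[ns,k]\to[ns,2k][nn,k]$ and $[nn,k]\to[nn,2k][ss,k]$), whereas the seed $[sn,2][ss,1]$ \emph{triples} ($[sn,k]\to[ns,3k]$ and $[ss,k]\to[ss,3k]$). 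Concretely, the section of $k_3$ at the vertex $0$ is $(id,(0,1),(0,1))\,(0,1)$, while the prose-seeded $l_2$ would be $(id,(0,1,2),(0,1,2))\,(0,1)$, so the claimed identity would already break at level $3$. The resolution---and this is what the paper's proof actually uses, consistently with the proof of Theorem \ref{th 4.39} (where $l_1=(0,1)$ carries $[ns,2]$ on the $2$-cycle and $[nn,1]$ on the fixed point) and with part ix) of Theorem \ref{4.44}---is that $l_n$ is operatively defined as the $n$-fold iteration of the type $[ns,1]$; the prose attachment of $[sn,2][ss,1]$ to $l_n$ is an internal inconsistency of the paper. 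With that seed, the label matching you worry about is immediate, since doubling $[ns,1]$ yields exactly $[ns,2][nn,1]$, the labels of $l_1$, and your induction closes. Note that your own base case already uses this convention implicitly (you assign seed types $ns,ns,ss$ to $l_0,l_0,x_0$), so your write-up is inconsistent on precisely the point that decides the proof; pinning the seed down correctly is not optional bookkeeping but the whole content of the lemma.
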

\begin{proof}
    We notice that $l_n$ came from $[ns,1]$ iterated $n$ times. Therefore, in this case, since we have the cycle data $[ns,1]^2[ss,1] $, iterating it $n-1$ times, we obtain $k_n=(l_{n-1},l_{n-1},x_{n-1}).$
\end{proof}

We proceed to prove Theorem \ref{th 4.39}.
\begin{proof}[Proof of Theorem \ref{th 4.39}]
   We first prove that $L_n$ is normal in $ M_n$. We do this by checking that, when we conjugate the generators of $L_n$ by generators of $M_n$, we obtain elements of $ L_n$. By computations on Section $\ref{5}$ we know that $x_m^{y_n},z_m^{y_n}\in L_n,$ with $m<n.$ The other conjugations follow similarly using direct computations as in Theorem \ref{TH 4.13}.
   We will prove that the two generators of the group $l_n$ and $y_n$ have order $2$ in the quotient and that they commute. We know that neither of them belong to $L_n$ as both acted like $(0,1)$ in $\mathcal{T}_1,$ and $L_n$ restricted to $\mathcal{T}_1$ is $<\negmedspace (0,1,2) \negmedspace>.$ We already know from the discussion in Theorem \ref{TH 4.13} that $y_n$ has order $2.$

    We recall that $l_n=(id,l_{n-1},y_{n-1}),(0,1).$ This follows by noticing that $l_{1}=(0,1)$ to which we attach $[ns,2]$ and to $(2)$ we attach $[nn,1].$ We compute the square of $l_n$ following the usual rules given by the semidirect product and obtain:
    \begin{align*}
        l_n^2=(l_{n-1},l_{n-1},y_{n-1}^2).
    \end{align*}
    We know that $y_{n-1}^2\in L_n$ since it is in $L_{n-1}.$ We also have by Lemma \ref{Lemma 4.40} that\\ $l_{n}^2\cdot k_n^{-1}=(id,id,y_{n-1}^2 x_{n-1}^{-1})\in L_n.$ We therefore deduce that $l_n^2,y_{n}^2\in L_n.$ Also, neither element is equivalent. We can look at the group $L_n,y_n$ restricted to $T_2$ and deduce that $l_n\notin L_n,y_n.$ This implies that $[<\negmedspace L_n,y_n \negmedspace >:L_n]=2,$ and $[<\negmedspace L_n,y_n,l_n \negmedspace >:<\negmedspace L_n,y_n \negmedspace >]=2.$ 
\end{proof}

We prove that $L_{n-1}\times L_{n-1}\times L_{n-1}$ is a normal subgroup of $L_n$, more precisely, the normal closure of $L_{n-1}.$
\begin{theorem}
    Let $L_n$ as in Theorem \ref{th 4.39}, then $L_{n-1}\times L_{n-1}^{x_{n}}\times L_{n-1}^{x_{n-1}^2}=L_{n-1}^{L_n}.$
\end{theorem}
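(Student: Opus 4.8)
The plan is to follow the proof of Theorem \ref{th 4.22} almost verbatim, the only new ingredients being the extra generator $k_n$ of $L_n$ and the identity of Lemma \ref{Lemma 4.40}. Write $P := L_{n-1}\times L_{n-1}^{x_n}\times L_{n-1}^{x_n^2}$, where $L_{n-1}$ is embedded via $\sigma\mapsto(\sigma,\mathrm{id},\mathrm{id})$. Since $x_n$ acts as the $3$-cycle $(0,1,2)$ on the first level, conjugation by $x_n$ (resp.\ $x_n^2$) carries the block of $L_{n-1}$ over the vertex $0$ to the block over $1$ (resp.\ $2$). Hence the three factors live in the three distinct subtrees hanging from level one, so they pairwise commute and $P$ is their internal direct product; in particular $P$ is a genuine subgroup of $L_n$, isomorphic to $L_{n-1}^3$ sitting inside the base group of $\mathrm{Aut}(\mathcal{T}_n)\cong\mathrm{Aut}(\mathcal{T}_{n-1})^3\rtimes S_3$.

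The inclusion $\supseteq$ is immediate: $L_{n-1}\subseteq L_{n-1}^{L_n}$ by definition of the normal closure, and since $x_n,x_n^2\in L_n$ the conjugates $L_{n-1}^{x_n}$ and $L_{n-1}^{x_n^2}$ also lie in $L_{n-1}^{L_n}$; as the normal closure is a group, it contains the product $P$.

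For $\subseteq$ it suffices to show that $P$ is normal in $L_n$, for then the smallest normal subgroup of $L_n$ containing $L_{n-1}$ — namely $L_{n-1}^{L_n}$ — is contained in $P$. I would check normality on the generators $x_n,z_n,k_n$ and on the embedded copy of $L_{n-1}$. Conjugation by $x_n$ permutes the three blocks cyclically and conjugates them by the sections of $x_n$, all of which lie in $L_{n-1}$; hence $P^{x_n}=P$. Conjugation by an embedded element $(\tau,\mathrm{id},\mathrm{id})$ with $\tau\in L_{n-1}$ acts only in the first block and sends $L_{n-1}$ to itself, so it preserves $P$. The elements $z_n=(y_{n-1},y_{n-1},x_{n-1})$ and, by Lemma \ref{Lemma 4.40}, $k_n=(l_{n-1},l_{n-1},x_{n-1})$ both lie in the base group, so conjugation by them acts block-by-block through the entries $y_{n-1},x_{n-1}$ and $l_{n-1}$. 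All of these entries lie in $M_{n-1}=\langle L_{n-1},y_{n-1},l_{n-1}\rangle$, and the normality clause $L_{n-1}\unlhd M_{n-1}$ established in Theorem \ref{th 4.39} shows that each of them normalizes $L_{n-1}$; hence each block is sent to itself and $P^{z_n}=P^{k_n}=P$. Combining the two inclusions yields $P=L_{n-1}^{L_n}$.

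The routine part is the verification that $x_{n-1},y_{n-1},l_{n-1}$ normalize $L_{n-1}$, which is exactly the inductive content of the normality clause of Theorem \ref{th 4.39}. The one point requiring care — and the genuinely new step beyond the length-one case of Theorem \ref{th 4.22} — is the handling of the generator $k_n$: here I rely on Lemma \ref{Lemma 4.40} to rewrite $k_n$ as the base-group element $(l_{n-1},l_{n-1},x_{n-1})$, which is what reduces its conjugation action to block-wise conjugation by elements of $M_{n-1}$. I expect the only delicate point in a full write-up to be tracking the section twists coming from $x_n$, so as to confirm that the cyclically permuted blocks return exactly to $P$ and not merely to a conjugate of it.
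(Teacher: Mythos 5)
Your proposal is correct and follows essentially the same route as the paper: both prove the theorem by showing $L_{n-1}\times L_{n-1}^{x_n}\times L_{n-1}^{x_n^2}$ is normal in $L_n$, checking conjugation on generators, using Lemma \ref{Lemma 4.40} to rewrite $k_n$ as the base-group element $(l_{n-1},l_{n-1},x_{n-1})$, and reducing block-wise conjugation to the normality $L_{n-1}\unlhd M_{n-1}$ from Theorem \ref{th 4.39}. Your write-up is in fact somewhat more complete than the paper's, since you make explicit the inclusion $\supseteq$ and the deduction of $\subseteq$ from normality, which the paper leaves implicit.
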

\begin{proof}
    We follow a similar procedure to Theorem \ref{th 4.22}.We first prove that $L_{n-1}\times L_{n-1}\times L_{n-1}$ is normal. It is enough to prove it for the generators, and we already know that $x_n$ fixes our group as it permutes the factors. We also know $(x_{n-1},id,id)^{z_{n}},(z_{n-1},id,id)^{z_n}\in L_{n-1}\times L_{n-1}^{x_{n}}\times L_{n-1}^{x_{n-1}^2}.$ It is left to check that $(k_{n-1},id,id)^{z_n},(x_{n-1},id,id)^{k_n},(y_{n.1},id,id)^{k_n},(k_{n-1},id,id)^{k_n}\in L_{n-1}\times L_{n-1}^{x_{n}}\times L_{n-1}^{x_{n-1}^2}.$ We have
    \[
\resizebox{\textwidth}{!}{$
(k_{n-1}, id, id)^{z_n} = (y_{n-1}, y_{n-1} x_{n-1}) (k_{n-1}, id, id) (y_{n-1}^{-1}, y_{n-1}^{-1}, x_{n-1}^{-1}) = (k_{n-1}^{y_{n-1}}, id, id) \in L_{n-1} \times L_{n-1}^{x_n} \times L_{n-1}^{x_{n-1}^2}
$}
\]
    Where the fact that $(k_{n-1}^{y_{n-1}},id,id)\in L_{n-1}\times L_{n-1}^{x_{n}}\times L_{n-1}^{x_{n-1}^2}$ is equivalent to $k_{n-1}^{y_{n-1}}\in L_{n-1}$ which follows from $L_{n-1}\unlhd M_{n-1}.$
    We proceed with the generator $(x_n,id,id):$
    \[
\resizebox{\textwidth}{!}{$
(x_{n-1}, id, id)^{k_n} = (l_{n-1}, l_{n-1}, x_{n-1}) (x_{n-1}, id, id) (l_{n-1}^{-1}, l_{n-1}^{-1}, x_{n-1}^{-1}) = (x_{n-1}^{l_{n-1}}, id, id) \in L_{n-1} \times L_{n-1}^{x_n} \times L_{n-1}^{x_{n-1}^2}
$}
\]
    Where the result followed from the same reason as before.We continue with $(y_{n-1},id,id):$
    \begin{align*}
        (y_{n-1},id,id)^{k_n}=(y_{n-1}^{l_{n-1}},id,id)\in L_{n-1}\times L_{n-1}^{x_{n}}\times L_{n-1}^{x_{n-1}^2}.
    \end{align*}
    We finish with $(k_{n-1},id,id):$
    \begin{align*}
        (k_{n-1},id,id)^{k_n}=(k_{n-1}^{l_{n-1}},id,id)\in L_{n-1}\times L_{n-1}^{x_{n}}\times L_{n-1}^{x_{n-1}^2}.
    \end{align*}
    We conclude that $L_{n-1}^{L_n}=L_{n-1}\times L_{n-1}^{x_{n}}\times L_{n-1}^{x_{n-1}^2}.$
\end{proof}
We construct an intermediate subgroup that will help us compute the index of $L_{n-1}\times L_{n-1}^{x_{n}}\times L_{n-1}^{x_{n-1}^2}$ in $ L_n.$ We define $K_n=<\negmedspace z_n,k_n, L_{n-1}>^{L_n}.$
\begin{theorem}\label{4.42}
    Let $L_n$ as in Theorem \ref{th 4.39} then we have that $[L_n:L_{n-1}\times L_{n-1}^{x_{n}}\times L_{n-1}^{x_{n-1}^2}]=48.$
\end{theorem}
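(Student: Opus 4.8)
The plan is to factor the index through the intermediate subgroup $K_n=\langle z_n,k_n,L_{n-1}\rangle^{L_n}$, writing
$$[L_n:N]=[L_n:K_n]\cdot[K_n:N],\qquad N:=L_{n-1}\times L_{n-1}^{x_n}\times L_{n-1}^{x_n^2},$$
and showing that the two factors equal $3$ and $16$. This mirrors the length-one argument of Theorem \ref{th 4.23}, where the analogous index was $3\cdot 4=12$; the extra factor of $4$ here will come from the additional generator $k_n$. Note that $N$ is the normal closure constructed in the preceding theorem, so $N\unlhd L_n$ and $N\subseteq K_n$.

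First I would establish $[L_n:K_n]=3$. Since $K_n$ is a normal closure it is normal in $L_n$, and every element of $K_n$ fixes the first level of $\mathcal{T}_n$ (the generators $z_n,k_n$ and $L_{n-1}$ all do). The element $x_n$ acts as the $3$-cycle $(0,1,2)$ on level one, so $x_n\notin K_n$, while $x_n^3=(x_{n-1},x_{n-1},x_{n-1})\in N\subseteq K_n$. Hence $L_n/K_n$ is cyclic of order $3$, generated by the image of $x_n$.

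The main step is to show $[K_n:N]=16$ by identifying $K_n/N$ with $V_4\times V_4$. Modulo $N$ this quotient is generated by the six cosets $\bar z_n,\bar z_n^{x_n},\bar z_n^{x_n^2},\bar k_n,\bar k_n^{x_n},\bar k_n^{x_n^2}$: conjugation by $L_{n-1}\subseteq N$ is trivial on the quotient, and, once it is seen to be abelian, conjugation by $z_n,k_n$ is trivial too, so the only surviving conjugations are by powers of $x_n$. Each of these cosets is an involution: from $z_n=(y_{n-1},y_{n-1},x_{n-1})$ and $k_n=(l_{n-1},l_{n-1},x_{n-1})$ (Lemma \ref{Lemma 4.40}) one computes $z_n^2=(y_{n-1}^2,y_{n-1}^2,x_{n-1}^2)$ and $k_n^2=(l_{n-1}^2,l_{n-1}^2,x_{n-1}^2)$, both lying in $N$ because $y_{n-1}^2,l_{n-1}^2,x_{n-1}^2\in L_{n-1}$ (the first two by Theorems \ref{TH 4.13} and \ref{th 4.39}).

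It then remains to pin down the relations among these six involutions. Exactly as in Theorem \ref{th 4.23}, the three $z$-conjugates satisfy $\bar z_n\,\bar z_n^{x_n}=\bar z_n^{x_n^2}$, so their product is trivial and they span a copy of $V_4$; I expect the identical computation, carried out componentwise with $k_n$ in place of $z_n$, to show the three $k$-conjugates span a second $V_4$. Finally I would check that these two Klein four-groups commute modulo $N$ and meet trivially, yielding $K_n/N\cong V_4\times V_4$ of order $16$ and hence $[L_n:N]=3\cdot 16=48$. The hard part will be this last verification: establishing the $V_4$ relations for the $k$-family and the commuting and trivial-intersection statements requires explicit componentwise products such as $z_n^{x_n^i}k_n^{x_n^j}$, together with the conjugation formulas for $z_n^{x_n},z_n^{x_n^2}$ and their $k$-analogues, to confirm that no unexpected cross-relation collapses the order of the quotient below $16$.
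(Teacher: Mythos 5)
Your proposal follows essentially the same route as the paper: the same intermediate subgroup $K_n=\langle z_n,k_n,L_{n-1}\rangle^{L_n}$ with $[L_n:K_n]=3$ via $x_n\notin K_n$ and $x_n^3=(x_{n-1},x_{n-1},x_{n-1})$, then $[K_n:N]=16$ by showing the six cosets $\bar z_n^{x_n^i},\bar k_n^{x_n^i}$ are involutions (using $z_n^2,k_n^2\in N$ since $y_{n-1}^2,l_{n-1}^2\in L_{n-1}$) satisfying the $V_4$-type relations of Theorem \ref{th 4.23}. The only cosmetic difference is packaging: you present the quotient as $V_4\times V_4$ with commuting, trivially intersecting factors, whereas the paper verifies the analogous relations for the $z$-family, the $k$-family, and additionally the $z_nk_n$-family to conclude the quotient is $(\mathbb{Z}/2\mathbb{Z})^4$ --- the same elementary abelian group of order $16$.
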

\begin{lemma}
    Let $L_n$ and $K_n$ be as before. We have that $[L_n:K_n]=3.$
\end{lemma}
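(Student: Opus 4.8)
The plan is to mimic the argument used to establish the analogous index-$3$ statement in the length-one case (the first part of Theorem \ref{th 4.23}), exploiting that $K_n$ is by construction normal in $L_n$, being the normal closure $\langle z_n, k_n, L_{n-1}\rangle^{L_n}$. The quotient $L_n/K_n$ should turn out to be cyclic of order three, generated by the image of $x_n$, so that the index is forced to be exactly three.

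First I would observe that every defining generator of $K_n$ fixes the first level of $\mathcal{T}$. Indeed, $z_n$ and $k_n$ arise as iterations of the types $[ss,1][nn,1]^2$ and $[ns,1]^2[ss,1]$ respectively, both attached to the identity of $S_3$, so by Lemma \ref{Lemma 4.40} and the analogous formula for $z_n$ they have the explicit shapes $z_n=(y_{n-1},y_{n-1},x_{n-1})$ and $k_n=(l_{n-1},l_{n-1},x_{n-1})$, both of which act trivially on level one; the embedded copy of $L_{n-1}$ acts as $\sigma\mapsto(\sigma,\mathrm{id},\mathrm{id})$ and hence also fixes level one. Since the first-level permutation of a conjugate $h^{-1}gh$ equals $\pi_h^{-1}\pi_g\pi_h$ (where $\pi$ denotes the induced permutation on $\mathcal{T}_1$), conjugation preserves the property of fixing level one, and so does multiplication; therefore every element of the normal closure $K_n$ fixes the first level.

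Next I would show $x_n\notin K_n$: by construction $x_n$ acts as the $3$-cycle $(0,1,2)$ on level one, so it cannot lie in $K_n$ by the previous paragraph. On the other hand $x_n^3=(x_{n-1},x_{n-1},x_{n-1})$ does lie in $K_n$, because $(x_{n-1},\mathrm{id},\mathrm{id})$ belongs to the embedded $L_{n-1}\subset K_n$ and its $x_n$-conjugates $(\mathrm{id},x_{n-1},\mathrm{id})$ and $(\mathrm{id},\mathrm{id},x_{n-1})$ lie in $K_n$ by normality, whence so does their product. Finally, since $z_n$, $k_n$ and $L_{n-1}$ all lie in $K_n$, we have $L_n=\langle x_n,K_n\rangle$, so $L_n/K_n$ is cyclic generated by $x_nK_n$; as $x_n^3\in K_n$ but $x_n\notin K_n$, this image has order exactly three, giving $[L_n:K_n]=3$.

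The computations are all routine once the structural facts are in place; the only point requiring genuine care is the verification that the generators—particularly $k_n$ via Lemma \ref{Lemma 4.40}—really do fix the first level, since this is precisely what forces $x_n\notin K_n$ and pins the index down to exactly three rather than to a proper divisor of it. This parallels exactly how the normality of $K_n$ together with the first-level obstruction was used in Theorem \ref{th 4.23}.
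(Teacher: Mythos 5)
Your proof is correct and follows essentially the same route as the paper: $x_n\notin K_n$ because every element of $K_n$ fixes the first level of $\mathcal{T}$, while $x_n^3=(x_{n-1},x_{n-1},x_{n-1})\in K_n$, so the image of $x_n$ generates the quotient $L_n/K_n$ and has order exactly $3$. The only difference is that you spell out the details the paper leaves implicit (closure of the level-one-fixing property under conjugation and multiplication, and the conjugation argument putting $x_n^3$ in $K_n$), which is a faithful expansion rather than a different approach.
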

\begin{proof}
    The only generator that is a priori in $L_n$ but not in $K_n$ is $ x_n.$ Indeed its not in $K_n$ by looking at the action on $\mathcal{T}_1$ since $K_n$ acts trivially. It is enough to show that $x_n^3\in K_n.$ We already know this from Section \ref{5} but we reprove it here. $x_{n}^3=(x_{n-1},x_{n-1},x_{n-1})\in K_n.$
\end{proof}
We now prove Theorem \ref{4.42}.
\begin{proof}
    It is left to check that $[K_n:L_{n-1}\times L_{n-1}^{x_n}\times L_{n-1}^{x_n^2}]=16.$ The quotient is generated by $z_n,k_n$ and their conjugations. Notice that by the proof of Theorem \ref{th 4.39}, we see that $z_n,k_n\notin L_{n-1}\times L_{n-1}^{x_{n}}\times L_{n-1}^{x_{n-1}^2}.$ We want to see that all have order two and commute with each other. We already know that $z_{n}^2\in L_{n-1}\times L_{n-1}^{x_n}\times L_{n-1}^{x_n^2}.$ Now for $k_n$
    \begin{align*}
        k_n^2=(l_{n-1}^2,l_{n-1}^2,x_{n-1}^2).
    \end{align*}
    Therefore we conclude that $k_{n}^2\in L_{n-1}\times L_{n-1}^{x_n}\times L_{n-1}^{x_n^2},$ since $l_{n-1}^2\in L_{n-1}$ as shown in Theorem \ref{th 4.39}.
    The order is the same for all the conjugates as it is preserved under conjugation. We now prove the following relations $k_{n}k_n^{x_n}=k_{n}^{x_n^2},k_{n}^{x_n^2}k_n=k_n^{x_n},k_{n}^{x_n}k_{n}^{x_n^2}=k_n.$ 
    This proof repeats the steps of Theorem \ref{th 4.23}. We also need to prove that $(z_nk_n)(z_nk_n)^{x_n}=(z_nk_n)^{x_n^2},(z_nk_n)^{x_n^2}(z_nk_n)=(z_nk_n)^{x_n},(z_nk_n)^{x_n}(z_nk_n)^{x_n^2}=(z_nk_n).$ All of this implies that our group is $\mathbb{Z}/2\mathbb{Z}^4.$
\end{proof}
For simplicity from now on we let $H_n=L_{n-1}\times L_{n-1}^{x_{n}}\times L_{n-1}^{x_{n-1}^2}.$
We introduce notation for the cycle data and state a theorem about the group satisfying the data:

\[
\resizebox{\textwidth}{!}{$
\begin{aligned}
A_1^{(n)} &:= \textit{the cycle data obtained by applying the Markov process n-1 times to } ([ss,1]^3,\tfrac{1}{48}); \\
A_2^{(n)} &:= \textit{the cycle data obtained by applying the Markov process n-1 times to } ([ss,3],\tfrac{2}{3}); \\
A_3^{(n)} &:= \textit{the cycle data obtained by applying the Markov process n-1 times to } ([nn,1]^2[ss,1],\tfrac{1}{16}); \\
A_4^{(n)} &:= \textit{the cycle data obtained by applying the Markov process n-1 times to } ([ns,1]^2[ss,1],\tfrac{1}{16}); \\
A_5^{(n)} &:= \textit{the cycle data obtained by applying the Markov process n-1 times to } ([sn,1]^2[ss,1],\tfrac{1}{16}); \\
A_6^{(n)} &:= \textit{the cycle data obtained by applying the Markov process n-1 times to } ([ns,1][sn,1][nn,1],\tfrac{1}{8}); \\
A_7^{(n)} &:= \textit{the cycle data obtained by applying the Markov process n-1 times to } ([nn,1],\tfrac{1}{4}); \\
A_8^{(n)} &:= \textit{the cycle data obtained by applying the Markov process n-1 times to } ([ss,1],\tfrac{1}{4}); \\
A_9^{(n)} &:= \textit{the cycle data obtained by applying the Markov process n-1 times to } ([sn,1],\tfrac{1}{4}); \\
A_{10}^{(n)} &:= \textit{the cycle data obtained by applying the Markov process n-1 times to } ([ns,1],1).
\end{aligned}
$}
\]
    \begin{theorem}\label{4.44}
        The following is true for $n\geq 3:$
        \begin{enumerate}[i)]
            \item $A_1^{(n)}=CD(H_n,L_n);$
            \item $A_2^{(n)}=CD(K_nx_n\sqcup K_nx_n^2,L_n);$
            \item $A_3^{(n)}=CD(z_nH_n\sqcup z_n^{x_n}H_n\sqcup z_n^{x_n^2}H_n,L_n);$
            \item $A_4^{(n)}=CD(k_nH_n\sqcup k_n^{x_n}H_n\sqcup k_n^{x_n^2}H_n,L_n);$
            \item $A_5^{(n)}=CD(k_nz_nH_n\sqcup (k_nz_n)^{x_n}H_n\sqcup (k_nz_n)^{x_n^2},L_n);$
            \item $A_6^{(n)}=CD(z_nk_n^{x_n}H_n\sqcup z_nk_n^{x_n^2}H_n\sqcup z_n^{x_n}k_nH_n\sqcup z_n^{x_n^2}k_nH_n\sqcup z_n^{x_n}k_n^{x_n^2}H_n\sqcup z_n^{x_n^2}k_n^{x_n},L_n);$
            \item $A_7^{(n+1)}=CD(L_ny_n,M_n);$
            \item $A_9^{(n+1)}=CD(L_ny_nl_n,M_n);$
            \item $A_{10}^{(n+1)}=CD(L_nl_n,M_n).$
            
        \end{enumerate}
    \end{theorem}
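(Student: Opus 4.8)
The plan is to prove statements i)--x) simultaneously by strong induction on $n$, taking the base case $n=3$ from a direct computation and, in the inductive step, assuming all ten statements at every index up to $n-1$. The backbone of the argument is the self-similarity of $L_{n-1}$. Under the full Markov model of Example \ref{Example 4.13}, the type $[ss,1]$ has exactly six descendants, namely $[ss,1]^3$, $[ss,3]$, $[nn,1]^2[ss,1]$, $[ns,1]^2[ss,1]$, $[sn,1]^2[ss,1]$ and $[ns,1][sn,1][nn,1]$, carrying probabilities $\tfrac1{48},\tfrac23,\tfrac1{16},\tfrac1{16},\tfrac1{16},\tfrac18$, which sum to $1$. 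By the inductive hypothesis these six data are realized by the cosets appearing in i)--vi) at level $n-1$, so that
$$CD(L_{n-1},L_{n-1})=\bigsqcup_{i=1}^{6}A_i^{(n-1)}.$$
This identity is what every componentwise computation below consumes.

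I would treat the level-preserving statements i)--vi) first, since their coset representatives lie in $L_n$. For i), the product decomposition $H_n=L_{n-1}\times L_{n-1}^{x_n}\times L_{n-1}^{x_n^2}$ together with Proposition \ref{prop3.14} shows that $CD(H_n,L_n)$ is the threefold product of $CD(L_{n-1},L_{n-1})$, which is by definition the iterate of $[ss,1]^3$, i.e. $A_1^{(n)}$. Statement ii) is the one needing the cube trick of Theorem \ref{th 4.24}: for each of the $16$ cosets $g$ of $K_n/H_n\cong(\mathbb{Z}/2\mathbb{Z})^4$ I would compute $(H_ngx_n)^3$ and $(H_ngx_n^2)^3$, verify that the product $g_0g_1g_2$ of the three subtree-sections lands in $L_{n-1}$ (using $y_{n-1}^2\in L_{n-1}$ and $l_{n-1}^2\in L_{n-1}$ from Theorems \ref{TH 4.13} and \ref{th 4.39}), and conclude $CD(H_ngx_n^{\pm1},L_n)=\tfrac1{48}\,t\,CD(L_{n-1},L_{n-1})$; summing the $32$ cosets gives $\tfrac{32}{48}=\tfrac23$, matching $A_2^{(n)}$. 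For iii)--vi) the representatives fix the first level, so after inserting the explicit sections $z_n=(y_{n-1},y_{n-1},x_{n-1})$ and $k_n=(l_{n-1},l_{n-1},x_{n-1})$ (Lemma \ref{Lemma 4.40}) and their $x_n$-conjugates, each coset becomes a direct product of three $L_{n-1}$-cosets whose cycle data are, by induction, iterates of $[nn,1]$, $[sn,1]$, $[ns,1]$ or $[ss,1]$; combining these labels multiplicatively in $\mathbb{F}_q^\times/(\mathbb{F}_q^\times)^2$ reproduces the type-sequences $[nn,1]^2[ss,1]$, $[ns,1]^2[ss,1]$, $[sn,1]^2[ss,1]$ and $[ns,1][sn,1][nn,1]$, giving $A_3^{(n)},\dots,A_6^{(n)}$, with the three (respectively six) cosets contributing equal shares.

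For the level-mixing statements vii)--x), whose representatives lie in $M_n\setminus L_n$, the method depends on the first letter of the governing label. Since both $y_n$ and $l_n$ act as the transposition $(0,1)$ on the first level, every representative of $L_ny_n$ and of $L_nl_n$ also acts as a transposition, so I would use the squaring trick of Theorem \ref{th 4.24}(iv): writing $y_n=(y_{n-1},y_{n-1},x_{n-1}),(0,1)$ and $l_n=(\mathrm{id},l_{n-1},y_{n-1}),(0,1)$, the square of a representative collapses into a \emph{doubled} cycle across the two transposed subtrees and a \emph{single} cycle on the fixed one, so by Proposition \ref{prop3.14} its cycle data factors as $d\,CD(\cdot)\times CD(\cdot)$; sorting the cosets $H_ngx_n^jy_n$ (and the $l_n$-analogues) by which of the four descendants of $[nn,1]$, respectively of $[ns,1]$, they produce yields $A_7^{(n+1)}$ and $A_{10}^{(n+1)}$. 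Statement ix) is different: $y_nl_n$ fixes the first level, so the coset $L_ny_nl_n$ acts as the full $\langle(0,1,2)\rangle$, exactly as $L_n$ does. I would therefore decompose $L_ny_nl_n=K_ny_nl_n\sqcup K_nx_ny_nl_n\sqcup K_nx_n^2y_nl_n$ and handle it as a \emph{tripling} of type $sn$: the two $3$-cycle cosets contribute the main descendant $[ns,3]$ of weight $\tfrac23$ via the cube trick, while $K_ny_nl_n$ contributes the remaining five descendants via the product structure, assembling to $A_9^{(n+1)}$.

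The step I expect to be the genuine obstacle is the label bookkeeping in vi), vii) and ix). Because $[sn,k]\to[ns,3k]$ flips the two off-diagonal labels under the model, the generator $l_n$ built from $[sn,2][ss,1]$ represents type-$ns$ data (part x)) while $y_nl_n$ represents type-$sn$ data (part ix)); keeping this flip consistent with the group identity $k_n=(l_{n-1},l_{n-1},x_{n-1})$ and with the product labels of $y_{n-1}l_{n-1}$, $l_{n-1}y_{n-1}$ and the various $x_{n-1}$-conjugates is delicate. In addition one must check that the enumeration of the $16$ cosets of $K_n/H_n$ distributes evenly among the descendant types so that the predicted probabilities ($\tfrac1{16}$, $\tfrac18$, and so on) come out exactly. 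Once this accounting is pinned down, each case reduces to the componentwise product and squaring identities already established in Section \ref{3}.
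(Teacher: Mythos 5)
Your proposal follows essentially the same route as the paper's proof: induction on $n$ built on the identity $CD(L_{n-1},L_{n-1})=\bigsqcup_{i=1}^{6}A_i^{(n-1)}$, the product decomposition plus Proposition \ref{prop3.14} for i) and iii)--vi), the cubing trick with the verification $g_0g_1g_2\in L_{n-1}$ for ii), the squaring trick giving $d\,CD(\cdot)\times CD(\cdot)$ for the transposition cosets $L_ny_n$ and $L_nl_n$ in vii) and ix), and the mixed product/cube treatment of $L_ny_nl_n$ in viii) (exploiting that $y_nl_n$ fixes the first level). The explicit coset-by-coset bookkeeping you defer---sorting the $16$ cosets of $K_n/H_n$ and their $x_n$-translates into descendant classes---is precisely the computation the paper's proof carries out, so your plan is the paper's argument in outline.
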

\begin{proof}
    We use induction on $n,$ and suppose that it is true for $i\leq n.$
    We start with $i):$
    \begin{enumerate}[i)]
        \item We know by induction hypothesis that $CD(L_{n-1},L_{n-1})=A_7^{(n)}.$ Therefore we know that $CD(L_{n-1}\times L_{n-1}^{x_{n}}\times L_{n-1}^{x_{n-1}^2},L_n)=A_1^{(n)}.$
        \item We separate all cosets, $H_ng,$ where $g$ is an arbitrary coset representative of $K_n/H_n.$
        \begin{align*}
            H_ngx_n=(L_{n-1}g_0,L_{n-1}g_1,L_{n-1}g_2x_{n-1}),(0,1,2).
        \end{align*}
        We compute the third power and obtain the following:
        \begin{align*}
            &(H_ngx_n)^3=(L_{n-1}g_0L_{n-1}g_1L_{n-1}g_2x_{n-1},\\
            &L_{n-1}g_1L_{n-1}g_2x_{n-1}L_{n-1}g_0,L_{n-1}g_2x_{n-1}L_{n-1}g_0L_{n-1}g_1).
        \end{align*}
        We look at any coordinate, and we see that our element has cycle data \newline $tCD(x_{n-1}g_0g_1g_2 L_{n-1},x_{n-1}g_0g_1g_2L_{n-1}).$ As $x_{n-1}\in L_{n-1}$ it is true that:
        $$CD(x_{n-1}g_0g_1g_2L_{n-1},x_{n-1}g_0g_1g_2L_{n-1})=CD(g_0g_1g_2,L_{n-1},g_0g_1g_2L_{n-1}).$$ 
        Moreover, we want to prove that $g_0g_1g_2\in L_{n-1}$ for all possible $g.$ We recall all the cosets and conclude the result:
        \begin{align*}
            &z_n=(y_{n-1},y_{n-1},x_{n-1});z_n^{x_n}=(y_{n-1},x_{n-1},y_{n-1}^{x_{n-1}});z_{n}^{x_n^2}=(x_{n-1},y_{n-1}^{x_{n-1}},y_{n-1}^{x_{n-1}});\\
            &k_n=(l_{n-1},l_{n-1},x_{n-1});k_n^{x_{n}}=(l_{n-1},x_{n-1},l_{n-1}^{x_{n-1}});k_n^{x_n^2}=(x_{n-1},l_{n-1}^{x_{n-1}},l_{n-1}^{x_{n-1}});\\
            &k_nz_n=(l_{n-1}y_{n-1},l_{n-1}y_{n-1},x_{n-1}^2);(k_nz_n)^{x_n}=(l_{n-1}y_{n-1},x_{n-1}^2,(l_{n-1}y_{n-1})^{x_{n-1}});\\ & (k_nz_n)^{x_n^2}=(x_{n-1},(l_{n-1}y_{n-1})^{x_{n-1}},(l_{n-1}y_{n-1})^{x_{n-1}});z_nk_n^{x_n}=(y_{n-1}l_{n-1},y_{n-1}x_{n-1},x_{n-1}l_{n-1}^{x_{n-1}});\\
            &z_n^{x_n}k_n=(y_{n-1}l_{n-1},x_{n-1}l_{n-1},y_{n-1}^{x_{n-1}}x_{n-1});z_{n}^{x_n^2}k_n=(x_{n-1}l_{n-1},y_{n-1}^{x_{n-1}}l_{n-1},y_{n-1}^{x_{n-1}}x_{n-1});\\
            &z_nk_n^{x_{n}^2}=(y_{n-1}x_{n-1},y_{n-1}l_{n-1}^{x_{n-1}},x_{n-1}l_{n-1}^{x_{n-1}});z_n^{x_n^2}k_n^{x_n}=(x_{n-1}l_{n-1},y_{n-1}^{x_{n-1}}x_{n-1},y_{n-1}^{x_{n-1}}l_{n-1}^{x_{n-1}});\\
            &z_n^{x_n}k_n^{x_n^2}=(y_{n-1}x_{n-1},x_{n-1}l_{n-1}^{x_{n-1}},y_{n-1}^{x_{n-1}}l_{n-1}^{x_{n-1}}).
        \end{align*}
        Noticing that $y_{n-1}^2,l_{n-1}^2\in L_{n-1}$ we deduce that for all elements $g_1g_2g_3\in L_{n-1}$ and therefore $CD(x_nK_n,L_n)=\frac{1}{2}A_2^{(n)}.$
        We can do the same for $x_n^2,$
        \begin{align*}
            H_ngx_n^2=(x_{n-1}g_2,g_1,x_{n-1}g_3),(0,2,1).
        \end{align*}
        We compute the third power, and the same holds as before, which implies $CD(x_nK_n\sqcup x_n^2K_n, L_n)=A_2^{(n)}.$
        \item We compute $z_nH_n:$
        \begin{align*}
            z_nH_n=(y_{n-1}L_{n-1},y_{n-1}L_{n-1},x_{n-1}L_{n-1}).
        \end{align*}
        We know by induction hypothesis that $CD(y_{n-1}L_{n-1},M_{n-1})=A_7^{(n)}.$ Also we have that\\ $CD(x_{n-1}L_{n-1},M_{n-1}).$ Both of this implies that $CD(z_nH_n,L_n)=\frac{1}{3}A_3^{(n)}.$ 
        The same holds for $z_n^{x_n}$ and $z_n^{x_n^{2}},$ therefore we have $CD(z_nH_n\sqcup z_n^{x_n}H_n \sqcup z_n^{x_n^2}H_n,L_n)=A_3^{(n)}.$
        \item The result follows similarly to $iii).$ We compute $k_nH_n$ and obtain:
        \begin{align*}
            k_nH_n=(l_{n-1}L_{n-1},l_{n-1}L_{n-1},x_{n-1}L_{n-1}).
        \end{align*}
        As we know that $CD(l_{n-1}L_{n-1},M_n)=A_{10}^{(n)}$ then we deduce that $CD(k_nH_n,L_{n})=\frac{1}{3}A_4^{(n)}.$ The same is true for $k_n^{x_n},k_n^{x_n^2}$ which gives the desired result $CD(k_nH_n\sqcup k_n^{x_n}H_n\sqcup k_n^{x_n^2}H_n).$
        \item In a similar fashion, we compute $k_nz_nH_n:$
        \begin{align*}
            k_nz_nH_n=(l_{n-1}y_{n-1}L_{n-1},l_{n-1}y_{n-1}L_{n-1},x_{n-1}^2L_{n-1}).
        \end{align*}
        We know by induction hypothesis that $CD(l_{n-1}y_{n-1}L_{n-1},M_{n-1})=A_9^{(n)}.$ Combining this with the cosets $k_nz_nH_n$ and $k_nz_nH_n$ we conclude $CD(k_nz_nH_n\sqcup (k_nz_n)^{x_n}H_n\sqcup (k_nz_n)^{x_n^2},L_n)=A_5^{(n)}.$
        \item We notice that all six cosets $z_nk_n^{x_n},z_nk_n^{x_n^2},z_n^{x_n}k_n,z_n^{x_n^2k_n},z_n^{x_n}k_n^{x_n^2},z_n^{x_n^2}k_n^{x_n}$ all have one component equal up to conjugation to $y_{n-1}x_{n-1},y_{n-1}l_{n-1},l_{n-1}x_{n-1}.$ We look at the corresponding cycle data $CD(y_{n-1}x_{n-1}L_{n-1},M_{n-1})=A_7^{(n)},CD(y_{n-1}l_{n-1}L_{n-1},M_{n-1})=A_9^{(n)},CD(l_{n-1}x_{n-1}L_{n-1},M_{n-1})=A_{10}^{(n)}.$ Therefore we deduce $A_6^{(n)}=CD(z_nk_n^{x_n}H_n\sqcup z_nk_n^{x_n^2}H_n\sqcup z_n^{x_n}k_nH_n\sqcup z_n^{x_n^2}k_nH_n\sqcup z_n^{x_n}k_n^{x_n^2}H_n\sqcup z_n^{x_n^2}k_n^{x_n},L_n).$
        \item For proving the result, we first iterate once over our initial cycle data, i.e. $[nn,1],\frac{1}{4}$ and with that, we are going to be able to apply induction and conclude the desired result. Applying once the model to $[nn,1],\frac{1}{4}$ we obtain:
        \begin{align*}
            [nn,2][ss,1],\frac{1}{16},[nn,1][ss,2],\frac{1}{16},[ns,2][sn,1],\frac{1}{16},[ns,1][sn,2],\frac{1}{16}.
        \end{align*}
        First, we compute all elements $H_ngy_n$ for all $g$ cosets of $K_n/H_n$ and then decide which cosets coincide with which data,
        \begin{align*}
            &H_ngy_n=(L_{n-1}g_0,L_{n-1}g_1y_{n-1},L_{n-1}g_2x_{n-1}),(0,1).
        \end{align*}
        We have, on the one hand, the cycle data of the third coordinate $g_2L_{n-1}$, which would correspond to some of the following data $[ss,1],[nn,1],[ns,1],[sn,1].$ Moreover, if we compute the second power, we obtain:
        \begin{align*}
           (H_ngy_n)^2= (L_{n-1}g_0 L_{n-1}g_1y_{n-1},L_{n-1}g_1y_{n-1}L_{n-1}g_0,L_{n-1}g_2x_{n-1}).
        \end{align*}
        We deduce that the cycle data of $H_ngy_n$ is $$CD(H_ngy_n,M_n)=CD(g_2L_{n-1},M_{n-1})\times dCD(g_0g_1y_{n-1}L_{n-1},M_{n-1}).$$
        We use the possibilities for $g$ and classify them using the cycle data:
        \[
\resizebox{\textwidth}{!}{$
\begin{aligned}
CD(H_ny_n,M_n) &= CD(H_nk_nz_ny_n,M_n) = CD(H_nz_ny_n,M_n) = CD(H_nk_ny_n,M_n) = CD(L_{n-1},M_n)\times dCD(y_{n-1}L_{n-1},M_{n-1}); \\
CD(H_nz_n^{x_n^2}k_ny_n,M_n) &= CD(H_nz_n^{x_n}k_ny_n,M_n) = CD(H_nz_n^{x_n}y_n,M_n) = CD(H_nz_n^{x_n^2}y_n,M_n) = CD(y_nL_{n-1},M_{n-1})\times dCD(L_{n-1},M_{n-1}); \\
CD(H_nz_nk_n^{x_n^2}y_n,M_n) &= CD(H_nz_nk_n^{x_n}y_n,M_n) = CD(H_nk_n^{x_n^2}y_n,M_n) = CD(H_nk_n^{x_n}y_n,M_n) = CD(l_{n-1}L_{n-1},M_{n-1})\times dCD(l_{n-1}y_{n-1}L_{n-1},M_{n-1}); \\
CD(H_nz_n^{x_n}k_n^{x_n^2}y_n,M_n) &= CD(H_nz_n^{x_n^2}k_n^{x_n}y_n,M_n) = CD(H_n(k_nz_n)^{x_n^2}y_n,M_n) = CD(H_n(k_nz_n)^{x_n}y_n,M_n) = CD(l_{n-1}y_{n-1}L_{n-1},M_{n-1})\times dCD(l_{n-1}L_{n-1},M_{n-1}).
\end{aligned}
$}
\]
         We now compute the cosets $ H_ngx_ny_n:$
        \begin{align*}
            &H_ngx_ny_n=
            (L_{n-1}g_0y_{n-1},L_{n-1}g_1x_{n-1},L_{n-1}g_2x_{n-1}),(0,2).
        \end{align*}
        We compute the second power and obtain the following:
        \begin{align*}
            (H_ngx_ny_n)^2=(L_{n-1}g_0y_{n-1}L_{n-1}g_2x_{n-1},L_{n-1}g_1x_{n-1}L_{n-1}g_1x_{n-1},L_{n-1}g_2x_{n-1}L_{n-1}g_0y_{n-1}).
        \end{align*}
        We deduce that:
        \begin{align*}
            CD(H_ngx_ny_n,M_n)=CD(g_1L_{n-1},M_{n-1})\times dCD(g_0g_2y_{n-1}L_{n-1},M_{n-1}).
        \end{align*}
        We arrange the cosets depending on the cycle data:
        \[
\resizebox{\textwidth}{!}{$
\begin{aligned}
CD(H_n(k_n z_n)^{x_n} x_n y_n, M_n) &= CD(H_n k_n^{x_n} x_n y_n, M_n) = CD(H_n z_n^{x_n} x_n y_n, M_n) = CD(H_n x_n y_n, M_n) = CD(L_{n-1}, M_n)\times dCD(y_{n-1} L_{n-1}, M_{n-1}); \\
CD(H_n z_n^{x_n^2} k_n^{x_n} x_n y_n, M_n) &= CD(H_n z_n k_n^{x_n} x_n y_n, M_n) = CD(H_n z_n x_n y_n, M_n) = CD(H_n z_n^{x_n^2} x_n y_n, M_n) = CD(y_n L_{n-1}, M_{n-1})\times dCD(L_{n-1}, M_{n-1}); \\
CD(H_n z_n^{x_n} k_n^{x_n^2} x_n y_n, M_n) &= CD(H_n z_n^{x_n} k_n x_n y_n, M_n) = CD(H_n k_n x_n y_n, M_n) = CD(H_n k_n^{x_n^2} x_n y_n, M_n) = CD(l_{n-1} L_{n-1}, M_{n-1})\times dCD(l_{n-1} y_{n-1} L_{n-1}, M_{n-1}); \\
CD(H_n z_n k_n^{x_n^2} x_n y_n, M_n) &= CD(H_n z_n^{x_n^2} k_n x_n y_n, M_n) = CD(H_n k_n z_n x_n y_n, M_n) = CD(H_n (k_n z_n)^{x_n^2} x_n y_n, M_n) = CD(l_{n-1} y_{n-1} L_{n-1}, M_{n-1})\times dCD(l_{n-1} L_{n-1}, M_{n-1}).
\end{aligned}
$}
\]
        We finish the proof with the cosets $H_ngx_n^2y_n,$
        \begin{align*}
            H_ngx_n^2y_n=(L_{n-1}g_0x_{n-1},L_{n-1}g_1x_{n-1},L_{n-1}g_2x_{n-1}y_{n-1}),(1,2).
        \end{align*}
        We compute the second power and deduce that:
        \begin{align*}
            CD(H_ngx_n^2y_n,M_n)=CD(g_0L_{n-1},M_{n-1})\times dCD(g_1g_2y_{n-1}L_{n-1}).
        \end{align*}
        We arrange cosets using the cycle data:
        \[
\resizebox{\textwidth}{!}{$
\begin{aligned}
CD(H_n(k_n z_n)^{x_n^2} x_n^2 y_n, M_n) &= CD(H_n k_n^{x_n^2} x_n^2 y_n, M_n) = CD(H_n x_n^2 y_n, M_n) = CD(H_n z_n^{x_n} k_n^{x_n^2} x_n^2 y_n, M_n) = CD(L_{n-1}, M_n) \times dCD(y_{n-1} L_{n-1}, M_{n-1}); \\
CD(H_n z_n^{x_n^2} x_n^2 y_n, M_n) &= CD(H_n z_n k_n^{x_n^2} x_n^2 y_n, M_n) = CD(H_n z_n^{x_n} x_n^2 y_n, M_n) = CD(H_n z_n x_n^2 y_n, M_n) = CD(y_n L_{n-1}, M_{n-1}) \times dCD(L_{n-1}, M_{n-1}); \\
CD(H_n z_n^{x_n^2} k_n^{x_n} x_n^2 y_n, M_n) &= CD(H_n k_n^{x_n} x_n^2 y_n, M_n) = CD(H_n z_n^{x_n} k_n x_n^2 y_n, M_n) = CD(H_n k_n x_n^2 y_n, M_n) = CD(l_{n-1} L_{n-1}, M_{n-1}) \times dCD(l_{n-1} y_{n-1} L_{n-1}, M_{n-1}); \\
CD(H_n z_n k_n^{x_n} x_n^2 y_n, M_n) &= CD(H_n (k_n z_n)^{x_n} x_n^2 y_n, M_n) = CD(H_n z_n^{x_n^2} k_n x_n y_n, M_n) = CD(H_n k_n z_n x_n y_n, M_n) = CD(l_{n-1} y_{n-1} L_{n-1}, M_{n-1}) \times dCD(l_{n-1} L_{n-1}, M_{n-1}).
\end{aligned}
$}
\]
        We arrange all data and notice that:
        \begin{align*}
&CD(L_{n-1},M_{n-1}) \times dCD(y_{n-1}L_{n-1},M_{n-1}) & \text{corresponds with } [ss,1][nn,2];\\
&CD(y_{n-1}L_{n-1},M_{n-1}) \times dCD(L_{n-1},M_{n-1}) & \text{corresponds with } [nn,1][ss,2];\\
&CD(l_{n-1}L_{n-1},M_{n-1}) \times dCD(l_{n-1}y_{n-1}L_{n-1},M_{n-1}) & \text{corresponds with } [sn,2][ns,1];\\
&dCD(l_{n-1}L_{n-1},M_{n-1}) \times CD(l_{n-1}y_{n-1}L_{n-1},M_{n-1}) & \text{corresponds with } [ns,2][sn,1].
\end{align*}\\We get $CD(L_ny_n,M_n)=A_7^{(n+1)}.$
       
        \item The proof will be similar to the one of $vii).$ We have to iterate once and group cosets appropriately. 

        We first iterate once the cycle data $[sn,1],1$ and get:
        \begin{align*}
            &[[ns,3],\frac{2}{3}],[[ss,1]^2[ns,1],\frac{1}{16}],[[ss,1][sn,1][nn,1],\frac{1}{8}],[[ns,1]^3,\frac{1}{48}],[[ns,1][sn,1]^2,\frac{1}{16}],[[ns,1],[nn,1]^2,\frac{1}{16}].
        \end{align*}
        We divide it into different cosets depending on the power of $x_n$ that appears.
        We compute $H_ngy_nl_n$ for an arbitrary coset $g,$ 
        \begin{align*}
            H_ngy_nl_n=(L_{n-1}g_0l_{n-1},L_{n-1}g_1y_{n-1},x_{n-1}y_{n-1}).
        \end{align*}
    We arrange cosets by cycle data:
    \begin{align*}
        &\textit{With cycle data coming from }[ss,1]^2[ns,1]:z_n^{x_n^2},z_n^{x_n^2}k_n,z_n^{x_n^2}k_n^{x_n};\\
        &\textit{With cycle data coming from }[ss,1][sn,1][nn,1]:z_n,z_n^{x_n},k_n,k_n^{x_n},z_nk_n^{x_n},z_n^{x_n}k_n;\\
        &\textit{With cycle data coming from }[ns,1]^3:(k_nz_n)^{x_n^2};\\
        &\textit{With cycle data coming from }[ns,1][sn,1]^2:k_n^{x_n^2},z_{n}k_n^{x_n^2},z_n^{x_n}k_n^{x_n^2};\\
        &\textit{With cycle data coming from }[ns,1][nn,1]^2:id,(k_nz_n)^{x_n},k_nz_n.
    \end{align*}
    We do the cosets $H_ngx_ny_nl_n:$
    \begin{align*}
        H_ngx_ny_nl_n=(L_{n-1}g_0y_{n-1},L_{n-1}g_1x_{n-1}y_{n-1},L_{n-1}g_2x_{n-1}l_{n-1}),(0,1,2).
    \end{align*}
We compute the cube and deduce the following:
\begin{align*}
    CD(H_ngx_ny_nl_n,M_n)=tCD(g_0g_1g_2l_{n-1}L_{n-1},M_{n-1}).
\end{align*}
We can compute for all cases that $g_0g_1g_2\in L_{n-1}.$ Therefore \newline
$tCD(g_0g_1g_2l_{n-1}L_{n-1},M_{n-1})=CD(l_{n-1}L_{n-1},M_{n-1}).$ This cycle data coincide with the iterations of $[ns,3].$ We do the same for $H_ngx_n^2y_nl_n.$ We also deduce:
$$CD(H_ngx_n^2y_nl_n,M_n)=tCD(l_{n-1}L_{n-1},M_{n-1}).$$

We arrange all cosets and see that the frequencies coincide with the first iteration of $[sn,1],$ We deduce that $CD(L_{n}y_nl_n, M_n)=A_9^{(n)}.$

    \item We finish in the same way as in $vii).$ We compute the iteration of $[ns,1]$ and divide the cosets of $H_ngl_n$ for $g\in K_n/H_n$ into the different cycle given by iterate $[ns,1].$ We obtain as level one data:
    \begin{align*}
        [sn,2][ss,1],[ss,2][sn,1],[ns,2][nn,1],[nn,2][ns,1].
    \end{align*}
    We compute $H_ngl_n:$
    \begin{align*}
        H_ngl_n=(L_{n-1}g_0,L_{n-1}g_1l_{n-1},L_{n-1}g_2y_{n-1}),(0,1).
    \end{align*}
    We group up the cosets:
\begin{align*}
    &\textit{With cycle data coming from }[sn,2][ss,1]:z_n^{x_n},z_n^{x_n^2},z_n^{x_n}k_n,z_n^{x_n^2}k_n;\\
        &\textit{With cycle data coming from }[ss,2][sn,1]:k_n^{x_n},k_n^{x_n^2},z_nk_n^{x_n},z_nk_n^{x_n^2};\\
        &\textit{With cycle data coming from }[nn,1][ns,2]:id,z_n,k_n,k_nz_n;\\
        &\textit{With cycle data coming from }[ns,1][nn,2]:(k_nz_n)^{x_n},(k_nz_n)^{x_n^2},z_n^{x_n^2}k_n^{x_n},z_n^{x_n}k_n^{x_n^2}.
\end{align*}
We go with $H_ngx_nl_n:$
\begin{align*}
    H_ngx_nl_n=(L_{n-1}g_0l_{n-1},L_{n-1}g_1y_{n-1},L_{n-1}g_2x_{n-1}),(0,2).
\end{align*}
We arrange cosets:
\begin{align*}
    &\textit{With cycle data coming from }[sn,2][ss,1]:z_n,z_n^{x_n^2},z_nk_n^{x_n},z_n^{x_n^2}k_n^{x_n};\\
        &\textit{With cycle data coming from }[ss,2][sn,1]:k_n,k_n^{x_n^2},z_n^{x_n}k_n,z_n^{x_n}k_{n}^{x_n^2};\\
        &\textit{With cycle data coming from }[nn,1][ns,2]:id,z_n^{x_n},k_n^{x_n},(k_nz_n)^{x_n};\\
        &\textit{With cycle data coming from }[ns,1][nn,2]:k_nz_n,(k_nz_n)^{x_n^2},z_n^{x_n^2}k_n,z_nk_n^{x_n^2}.
\end{align*}
We finish with $H_ngx_n^2l_n:$
\begin{align*}
    H_ngx_n^2l_n=(L_{n-1}g_0y_{n-1},L_{n-1}g_1x_{n-1},L_{n-1}g_2x_{n-1}l_{n-1}),(1,2).
\end{align*}
We group cycle data:
\begin{align*}
    &\textit{With cycle data coming from }[sn,2][ss,1]:z_n,z_n^{x_n},z_nk_n^{x_n^2},z_n^{x_n}k_n^{x_n^2};\\
        &\textit{With cycle data coming from }[ss,2][sn,1]:k_n,k_n^{x_n},z_n^{x_n^2}k_n,z_n^{x_n^2}k_n^{x_n};\\
        &\textit{With cycle data coming from }[nn,1][ns,2]:id,z_n^{x_n^2},k_n^{x_n^2},(k_nz_n)^{x_n^2};\\
        &\textit{With cycle data coming from }[ns,1][nn,2]:k_nz_n,(k_nz_n)^{x_n},z_nk_n^{x_n},z_n^{x_n}k_n.
\end{align*}

    We conclude that  $A_{10}^{(n+1)}=CD(l_nL_n,M_n).$
    \end{enumerate}
\end{proof}

This allows us to propose the following models for cubics $f_a-t$ with combined critical length two. We have the following models, depending on irreducibility and whether the discriminant is a square.
\begin{enumerate}
    \item Model 1: If $f_a-t$ is not irreducible and $(f_a(\gamma_1)-t)(f_a(\gamma_2)-t),(f_a^2(\gamma_1)-t)(f_a^2(\gamma_2)-t)$ are squares. Then we have initial data:
    \begin{align*}
        &[[ss,1]^3,\frac{1}{16}],[[ss,1][nn,1]^2,\frac{3}{16}],[[ss,1][ns,1]^2,\frac{3}{16}],[[ss,1][sn,1]^2,\frac{3}{16}],[[nn,1][ns,1][sn,1],\frac{3}{8}];
    \end{align*}
    
    therefore $(M_{f_a-t})_n=K_n.$
    \item Model 2: If $f_a-t$ is irreducible and $(f_a(\gamma_1)-t)(f_a(\gamma_2)-t),(f_a^2(\gamma_1)-t)(f_a^2(\gamma_2)-t)$ are squares we have initial data:
    \begin{align*}
        &[[ss,3],\frac{1}{3}],[[ss,1]^3,\frac{1}{48}],[[ss,1][nn,1]^2,\frac{1}{16}],[[ss,1][ns,1]^2,\frac{1}{16}],[[ss,1][sn,1]^2,\frac{1}{16}],[[nn,1][ns,1][sn,1],\frac{1}{8}];
    \end{align*}
    and $(M_{f_a-t})=L_n.$
    \item Model 3: If $f_a-t$ is irreducible and $(f_a(\gamma_1)-t)(f_a(\gamma_2)-t)$ is not a square but $(f_a^2(\gamma_1)-t)(f_a^2(\gamma_2)-t)$ is a square then we have initial types:
 \begin{align*}
        &[[ss,3],\frac{1}{6}],[[ss,1]^3,\frac{1}{96}],[[ss,1][nn,1]^2,\frac{1}{32}],[[ss,1][ns,1]^2,\frac{1}{32}],\\
        &[[ss,1][sn,1]^2,\frac{1}{32}],[[nn,1][ns,1][sn,1],\frac{1}{16}],[[ns,3],\frac{1}{6}],[[ns,1]^3,\frac{1}{96}],\\
        &[[ns,1][nn,1]^2,\frac{1}{32}],[[ns,1][ss,1]^2,\frac{1}{32}],
        [[ns,1][sn,1]^2,\frac{1}{32}],[[nn,1][ss,1][sn,1],\frac{1}{16}],
    \end{align*}
    and we have $(M_{f_a-t})_n=<\negmedspace L_n,y_nl_n\negmedspace >.$
    \item  Model 4: If $f_a-t$ is irreducible and $(f_a^2(\gamma_1)-t)(f_a^2(\gamma_2)-t)$ is not a square but $(f_a(\gamma_1)-t)(f_a(\gamma_2)-t)$ is a square then we have initial types:
  \begin{align*}
        &[[ss,3],\frac{1}{6}],[[ss,1]^3,\frac{1}{96}],[[ss,1][nn,1]^2,\frac{1}{32}],[[ss,1][ns,1]^2,\frac{1}{32}],[[ss,1][sn,1]^2,\frac{1}{32}],\\
        &[[nn,1][ns,1][sn,1],\frac{1}{16}],[[ns,2][nn,1],\frac{1}{8}],\\
        &[[nn,2][ns,1],\frac{1}{8}],[[sn,2][ss,1],\frac{1}{8}],[[ss,2][sn,1],\frac{1}{8}],
    \end{align*}
    we have $(M_{f_a-t})_n=<\negmedspace L_n,l_n\negmedspace >.$
 
    \item Model 5: If $f_a$ is irreducible and $(\frac{1}{2}-a)(-\frac{1}{2}-a),(-a)(-\frac{1}{2}-a)$ are non squares. Then we have initial data:
    \begin{align*}
        &[[ss,3],\frac{1}{12}],[[ns,3],\frac{1}{12}],[[sn,2],[ss,1],\frac{1}{16}],[[ss,2],[sn,1],\frac{1}{16}],[[nn,2][ss,1],\frac{1}{16}],\\
        &[[ss,2],[nn,1],\frac{1}{16}],[[ns,2][nn,1],\frac{1}{16}],[[nn,2],[ns,1],\frac{1}{16}],[[ns,2],[sn,1],\\
        &\frac{1}{16}],[[sn,2],[ns,1],\frac{1}{16}],
        [[ss,1]^3,\frac{1}{192}],[[ns,1]^2[ss,1],\frac{1}{64}],[[sn,1]^2[ss,1],\frac{1}{64}],\\&[[nn,1]^2[ss,1],\frac{1}{64}],[[ns,1][sn,1],[nn,1],\frac{1}{32}],
        [[ns,1]^3,\frac{1}{192}],\\
        &[[ns,1][ss,1]^2,\frac{1}{64}],[[ns,1][nn,1]^2,\frac{1}{64}],[[ns,1][sn,1]^2,\frac{1}{64}].
    \end{align*}
       \end{enumerate}
We state the following regarding the models.
\begin{theorem}\label{Th 6.9}
    Let $f_a,t$ be a cubic polynomial with a combined critical orbit of length two and $t\in \mathbb{Q}(\sqrt{-3})$. There exist groups for models $1-5$ with the cycle data given by the Markov process of the cubic over $\mathbb{Q}(\sqrt{-3}).$ Moreover this groups are $M_{f_a-t}.$
\end{theorem}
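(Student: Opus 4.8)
The plan is to deduce Theorem~\ref{Th 6.9} from the coset computations of Theorem~\ref{4.44}, in exact analogy with the way Theorem~\ref{Th 5.19} was obtained from Theorem~\ref{th 4.24} in the length-one case. The organizing principle is the isomorphism $M_n/L_n\cong V_4$ of Theorem~\ref{th 4.39}: I would fix the four coset representatives $L_n$, $L_ny_n$, $L_nl_n$ and $L_ny_nl_n$, and observe that parts vii), ix) and x) of Theorem~\ref{4.44} identify the cycle data of $L_ny_n$, $L_ny_nl_n$ and $L_nl_n$ with the iterations $A_7^{(n+1)}$, $A_9^{(n+1)}$ and $A_{10}^{(n+1)}$ of $[nn,1]$, $[sn,1]$ and $[ns,1]$ respectively, while $L_n$ itself carries the iteration of $[ss,1]$ assembled from parts i)--vi). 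Since $[ss],[nn],[ns],[sn]$ are precisely the states of the length-two Markov model, each of the five models is then realized by the subcollection of cosets dictated by its initial data.

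I would treat the models in order of difficulty. Model~$2$ is $L_n$, and the disjoint union of the six pieces in Theorem~\ref{4.44} i)--vi) is exactly the once-iterated datum of $[ss,1]$. Model~$1$ is $K_n$, which is $L_n$ with the two cosets $K_nx_n,K_nx_n^2$ of part ii) removed; as these are the only cosets acting as a $3$-cycle on the first level (the lemma preceding Theorem~\ref{4.42} gives $[L_n:K_n]=3$), deleting them removes precisely the $[ss,3]$ branch and leaves the Model~$1$ datum. Model~$5$ is all of $M_n$, whose data is the union of $\mathrm{CD}(L_n,M_n)$ with $A_7^{(n+1)},A_9^{(n+1)},A_{10}^{(n+1)}$; here the factor $\tfrac{|S_1|}{|S_2|}$ built into the definition of $\mathrm{CD}$ automatically supplies the correct weight $\tfrac14$ on each branch.

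Models~$3$ and $4$ are the two index-two subgroups containing $L_n$, namely $\langle L_n,y_nl_n\rangle=L_n\sqcup L_ny_nl_n$ and $\langle L_n,l_n\rangle=L_n\sqcup L_nl_n$. For each I would read off the cycle data one coset at a time: $\mathrm{CD}(L_n,\,\cdot\,)$ reproduces the $[ss,1]$-iterate and the nontrivial coset reproduces $A_9^{(n+1)}$ (Model~$3$, via Theorem~\ref{4.44} ix)) or $A_{10}^{(n+1)}$ (Model~$4$, via part x)), each rescaled by the index. Matching these unions against the tabulated data completes the identification, and the final assertion $(M_{f_a-t})_n=$ (the relevant group) is then immediate: by construction $(M_{f_a-t})_n$ is defined to be the group whose first-level datum is forced, through Lemma~\ref{Lemma 4.3}, by the irreducibility of $f_a-t$ together with the squareness of $(f_a(\gamma_i)-t)$ and $(f_a^2(\gamma_i)-t)$.

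The group theory here is light; the genuine obstacle is bookkeeping and normalization. Theorem~\ref{4.44} is only established for $n\geq3$, so the cases $n=1,2$ must be verified by direct computation. More delicately, one has to choose the $V_4$-coset representatives compatibly and confirm that the index factor $\tfrac{|S_1|}{|S_2|}$ agrees term-by-term with the ``initial probability'' $\bigl(\tfrac14,\tfrac12,\dots\bigr)$ encoded in each $A_i^{(n)}$, so that the five tabulated data sets are reproduced exactly and not merely up to an overall scalar. Reconciling those weights across all five models, rather than any single group-theoretic step, is where the care is required.
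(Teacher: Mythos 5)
Your proposal is correct and follows essentially the same route as the paper: the paper's proof is simply a citation of Theorem~\ref{4.44} (parts i)--vi) assembling $L_n$ for Model~2, the coset decompositions giving $K_n$ and $M_n$ for Models~1 and~5, and the parts computing $CD(L_ny_nl_n,M_n)=A_9^{(n+1)}$ and $CD(L_nl_n,M_n)=A_{10}^{(n+1)}$ for Models~3 and~4), which is exactly the $V_4$-coset bookkeeping you spell out in more detail. The only discrepancy is notational: what you call parts ix) and x) are parts viii) and ix) in the paper's enumeration (there is no tenth item; $A_8^{(n)}$ is defined but never appears in the list), and your added caveats about small $n$ and weight normalization are legitimate points the paper's own two-line proof leaves implicit.
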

\begin{proof}
    The proof of Model $1,2,5$ is given in Theorem $\ref{4.44}.$ For Model $3$ we look at the proof of Theorem $\ref{4.44}$ part $viii)$ and for model $4$ at part $ix).$ 
\end{proof}
\subsection{Hausdorff dimension}\label{4.2}
We compute, as for the other group, the Hausdorff dimension. We need to compute the size of our group.

\begin{theorem}\label{4.46}
    The size of $M_n\subset \mathrm{Aut}(\mathcal{T}_n)$ is $|M_n|=3^{\frac{3^n-1}{2}}2^{3^{n-1}+3^{n-3}}.$
\end{theorem}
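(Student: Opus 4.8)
The plan is to establish the closed form by induction on $n$, reducing the whole computation to the single multiplicative recursion $|M_n|=3\,|M_{n-1}|^3$, in direct analogy with the length-one computation of Theorem \ref{size M}. First I would gather the structural inputs already available: $[M_n:L_n]=4$ (Theorem \ref{th 4.39}, since $M_n/L_n\cong V_4$), $[L_n:H_n]=48$ (Theorem \ref{4.42}), and the observation that $H_n=L_{n-1}\times L_{n-1}^{x_n}\times L_{n-1}^{x_n^2}$ is an \emph{internal} direct product, its three factors acting on the three disjoint maximal subtrees below the root; hence $|H_n|=|L_{n-1}|^3$.

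Next I would chain these indices through $H_n$ to obtain
\[
|M_n|=[M_n:L_n]\,[L_n:H_n]\,|H_n|=4\cdot 48\cdot|L_{n-1}|^3=192\,|L_{n-1}|^3 .
\]
Because $[M_{n-1}:L_{n-1}]=4$ as well, we have $|L_{n-1}|=|M_{n-1}|/4$, and substituting yields
\[
|M_n|=192\cdot\frac{|M_{n-1}|^3}{64}=3\,|M_{n-1}|^3 ,
\]
which is the desired recursion. It then remains to check that the proposed closed form solves this recursion. Assuming $|M_{n-1}|=3^{(3^{n-1}-1)/2}\,2^{3^{n-2}+3^{n-4}}$, cubing multiplies each exponent by $3$ and the leading factor $3$ adds one to the power of $3$; the power of $3$ becomes $\tfrac{3(3^{n-1}-1)}{2}+1=\tfrac{3^n-1}{2}$ and the power of $2$ becomes $3(3^{n-2}+3^{n-4})=3^{n-1}+3^{n-3}$, matching the claim at level $n$.

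The hard part is the base of the induction. The recursion only closes once the structural theorems have stabilized, so I would seed it at the smallest level for which the exponent $3^{n-3}$ is a nonnegative integer, namely $n=3$, and verify $|M_3|=3^{13}2^{10}$ by hand from the explicit generators $x_3,y_3,z_3,k_3,l_3$ and their restrictions to the three subtrees. The delicate point is precisely that the indices $[M_n:L_n]$ and $[L_n:H_n]$ have not yet reached their stable values $4$ and $48$ at the very bottom of the tower---at level one, for instance, $y_1$ and $l_1$ both act as $(0,1)$, forcing $[M_1:L_1]=2$ rather than $4$---so I would have to confirm that Theorems \ref{th 4.39} and \ref{4.42}, together with the direct-product count $|H_n|=|L_{n-1}|^3$, genuinely hold from the level at which the induction is seeded onward. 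The surplus that accumulates over the length-one group before stabilization is exactly what the correction term $2^{3^{n-3}}$ records relative to the length-one answer $2^{3^{n-1}}$ of Theorem \ref{size M}, and pinning it down rigorously at the base level is the main obstacle.
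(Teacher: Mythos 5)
Your proof is correct and takes essentially the same route as the paper's: induction on $n$, chaining $|M_n|=[M_n:L_n]\cdot[L_n:H_n]\cdot|H_n|$ with $|H_n|=|L_{n-1}|^3$, and seeding the induction at $n=3$, the first level where the exponent $3^{n-3}$ is an integer and the structural theorems apply. In fact your version is the internally consistent one: the paper's own proof invokes $[L_n:H_n]=1728$, contradicting its Theorem \ref{4.42} (which gives $48$, the value you use), and then misstates the resulting power of $3$ and conflates $|L_n|$ with $|M_n|$, whereas your recursion $|M_n|=3\,|M_{n-1}|^3$ is exactly what the closed form satisfies.
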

\begin{proof}
    We prove it by induction on $n$; a computation yields that it is true for $n=1,2,3.$ We suppose it is true for $i\leq n-1.$ We know that when $n\geq 3$ we have $[L_n:L_{n-1}\times L_{n-1}^{x_n}\times L_{n-1}^{x_n^2}]=1728$ which implies $|L_n|=1728|L_{n-1}|^3.$ We also have $[M_n:L_n]=4.$ Therefore by induction this means that $|L_{n-1}|=3^{\frac{3^{n-1}-1}{2}}2^{3^{n-2}+3^{n-4}-2}.$ We compute the size of $L_n$ and get $|L_n|=1728|L_{n-1}|^3=1728\cdot3^{\frac{3^{n}-3}{2}}2^{3^{n-1}+3^{n-3}-6}=3^{\frac{3^n-1}{2}}2^{3^{n-1}+3^{n-3}}.$
\end{proof}
We deduce the Hausdorff dimension with Theorem \ref{4.46} and Theorem \ref{4.32}.
\begin{theorem}
    The Hausdorff dimension of $M_n\subseteq \mathrm{Aut}(\mathcal{T}_n)$ is:
    \[
\resizebox{\textwidth}{!}{$
\mathrm{hdim}_{\mathrm{Aut}(\mathcal{T}_n)}(M_n)
= \mathrm{hdim}_{\mathrm{Aut}(\mathcal{T}_n)}(H_n)
= \mathrm{hdim}_{\mathrm{Aut}(\mathcal{T}_n)}(K_n)
= \mathrm{hdim}_{\mathrm{Aut}(\mathcal{T}_n)}(\langle K_n, y_n \rangle)
= 1 - \frac{1}{3 \log_2(6)} \approx 0.871\ldots
$}
\]
\end{theorem}
\begin{proof}
    We use logarithm in base $6.$ We recall $\log_6(|\mathrm{Aut}(\mathcal{T}_n)|)=\frac{3^{n}-1}{2}.$ On the other hand $$\log_6(|M_n|)=\log_6\Bigg(\frac{|M_n|2^{\frac{3^{n-1}-1-2\cdot 3^{n-3}}{2}}}{2^{\frac{3^{n-1}-1-2\cdot 3^{n-3}}{2}}}\Bigg)=\log_6(|M_n|2^{\frac{3^{n-1}-1-2\cdot 3^{n-3}}{2}})-\log_6(2^{\frac{3^{n-1}-1-2\cdot 3^{n-3}}{2}})$$
    $$=\log_6(3^{\frac{3^n-1}{2}}2^{\frac{3^n-1}{2}})-\frac{\log_2(2^{\frac{3^{n-1}-1-2\cdot 3^{n-3}}{2}})}{\log_2(6)}=\frac{3^{n}-1}{2}-\frac{3^{n-1}-1-2\cdot 3^{n-3}}{2\log_2(6)}.$$ We compute the dimension and get:
    $$\lim_{n\to \infty} \frac{\log|M_n|}{\log|\mathrm{Aut}(\mathcal{T}_n)|}=\lim_{n\to \infty}\frac{\frac{3^{n}-1}{2}-\frac{3^{n-1}-1-2\cdot 3^{n-3}}{2\log_2(6)}}{\frac{3^n-1}{2}}=1-\frac{8}{27\log_2(6)}.$$
    For the other subgroups, the same holds, since all are of finite index in $M_n.$
\end{proof}
\section{Conjectures about the Markov groups}\label{5}
These groups have been constructed with the hope that they are related to the actual Galois groups of the iterations of the polynomials \( f_a \) over \( \mathbb{Q}(\sqrt{-3}) \). It may happen that certain possible factorisations never occur for some specific polynomials or values of \( a \). This leads to the following conjecture.

\begin{Conjecture}\label{Conjecture 7.1}
    Let $f_a$ a cubic polynomial as in Sections $\ref{3}$ and $\ref{4},$ and $t\in \mathbb{Q}(\sqrt{-3}).$ The associated Markov group from Theorems \ref{Th 5.19},\ref{Th 6.9} $M_n\subset  \mathrm{Gal}(f_a^n-t)$ over $\mathbb{Q}(\sqrt{-3}),$ $\forall n\geq 1.$  
\end{Conjecture}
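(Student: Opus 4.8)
The plan is to prove the containment $M_n \subseteq \mathrm{Gal}(f_a^n - t)$ by induction on $n$, constructing the generators of $M_n$ as honest elements of the Galois group under the arboreal embedding, rather than merely matching cycle-type frequencies. The base cases $n=1,2$ are handled by the explicit low-level data: over $\mathbb{Q}(\sqrt{-3})$ one checks directly that $\mathrm{Gal}(f_a - t)=S_3=M_1$ from the maximal first-level data recorded before Theorem \ref{Th 5.19} (resp.\ before Theorem \ref{Th 6.9} in the length-two case), and the level-$2$ behaviour can be verified by a finite resultant computation via Lemma \ref{Lemma 4.3}.

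For the inductive step, assume $M_{n-1}\subseteq \mathrm{Gal}(f_a^{n-1}-t)$, compatibly with the embedding into $\mathrm{Aut}(\mathcal{T}_{n-1})$. Since every root of $f_a^{n-1}-t$ is the $f_a$-image of a root of $f_a^n-t$, the splitting field of $f_a^{n-1}-t$ lies inside that of $f_a^n-t$, so restriction gives a surjection $\pi:\mathrm{Gal}(f_a^n-t)\twoheadrightarrow \mathrm{Gal}(f_a^{n-1}-t)$ whose kernel $V_n$ embeds into $S_3^{3^{n-1}}$, acting fibrewise on the triple of level-$n$ roots above each level-$(n-1)$ root. By Definition \ref{def 5.2}, each generator $x_n,y_n,z_n$ (together with $k_n,l_n$ in the length-two case) is a lift of the corresponding lower-level generator decorated by a specific tuple $\{s_i\}\in S_3^{3^{n-1}}$: a tripling $(0,1,2)$, a doubling $(0,1)$, or the identity, placed at prescribed leaves. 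Because $\pi$ is surjective and the lower-level generators already lie in $\mathrm{Gal}(f_a^{n-1}-t)$, it suffices to prove that $V_n$ contains each required decoration tuple.

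The decorations are to be produced by ramification. Working over $\mathbb{Q}(\sqrt{-3})$ forces $(-3)^{\deg g}$ to be a square, so by Lemma \ref{Lemma 4.3} the square/non-square label of each factor is governed purely by the combined-critical-orbit products $g(f^k(\gamma_1))g(f^k(\gamma_2))$, which is exactly the data carried by the Markov labels. The idea is to isolate a prime $\mathfrak{p}$ of $\mathbb{Q}(\sqrt{-3})$ that is unramified at level $n-1$ but at which a single fibre above a chosen level-$(n-1)$ root ramifies in the prescribed way: a non-square product yielding a transposition (for $y_n,l_n$) and an irreducible cubic fibre yielding a $3$-cycle (for $x_n$). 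The inertia generator at such $\mathfrak{p}$ then lies in $V_n$ and realises a single $(0,1)$ or $(0,1,2)$ supported on one fibre. Since $M_{n-1}\subseteq \mathrm{Gal}(f_a^{n-1}-t)$ already acts with the transitivity supplied by the adding-machine $x_{n-1}$ and its conjugates, conjugating this local element by lifts of lower-level elements spreads it across the whole orbit, assembling the full tuple $\{s_i\}$ demanded by Definition \ref{def 5.2}. The normal-closure and index computations of Theorems \ref{th 4.22}, \ref{th 4.23} and \ref{4.42} then confirm that the elements produced this way generate $V_n\cap M_n$, closing the induction.

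The main obstacle, and the reason this remains a conjecture, is the precise local analysis guaranteeing that inertia at a suitable $\mathfrak{p}$ contributes exactly one transposition or $3$-cycle in a single fibre, with no unexpected ramification collapsing several fibres simultaneously or introducing elements outside $M_n$. Concretely, one must show that the factors of the discriminant of $f_a^n-t$, expressed through the critical-orbit products, admit primes of the required quadratic-residue type at each fibre \emph{independently}; establishing this independence, and ruling out the degeneracies excluded in the colliding-orbit setting of \cite{benedetto2024arborealgaloisgroupscubic}, is exactly the arithmetic input that the computational evidence supports but that a complete proof would need to supply.
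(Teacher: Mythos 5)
You should first be clear about what you are comparing against: the statement you were given is Conjecture \ref{Conjecture 7.1} of the paper, and the paper contains \emph{no proof} of it. The only support offered is Remark \ref{remark 5.2}, which identifies the maximal length-one Markov group with the group $E_n$ of \cite{Bouw2018DynamicalBM}, so that the conjecture is consistent with the known Galois groups of $-2z^3+3z^2$ computed in \cite{Benedetto2017}, together with a proposed reduction, following Goksel \cite{article}, to the purely group-theoretic local-global conjugacy statement of Conjecture \ref{Conjecture 7.6}. Your proposal is likewise not a proof, and you say so yourself; so the right question is whether your sketch supplies the missing idea, and it does not. Note also a directional issue you inherited from the paper: the abstract and Section \ref{1} state the intended conjecture as $\mathrm{Gal}(f_a^n-t)$ being \emph{contained in} (a conjugate of) $M_n$, and Goksel's mechanism --- Chebotarev shows the Galois group is locally conjugate into $M_n$, and Conjecture \ref{Conjecture 7.6} would upgrade this to global conjugacy --- produces exactly that upper bound. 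Your induction instead attacks the literal reading $M_n\subseteq \mathrm{Gal}(f_a^n-t)$, a lower bound asserting largeness of the Galois group, which is a different and in general stronger claim (it can only hold after fixing root-labelling conjugacy conventions, which neither the paper nor your argument pins down), and it is not the statement the paper's machinery is built to address.

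The concrete step that would fail is your ramification construction, and the obstruction is more structural than the ``independence of quadratic-residue types'' you name as the gap. For a PCF polynomial the quantities governing the discriminants, $N_{K(\beta)/K}(f_a(\gamma_i)-\beta)=\pm(f_a^n(\gamma_i)-t)$ as in the proof of Lemma \ref{Lemma 4.3}, range over a \emph{finite} set of values as $n$ grows --- this finiteness is the entire reason the Markov states of Definition \ref{def types} exist. Consequently the tower $\mathbb{Q}(\sqrt{-3})\bigl(f_a^{-n}(t)\bigr)$ is ramified only above a finite set $S$ of primes independent of $n$. At level $n$ you need to decorate $3^{n-1}$ fibres, but you have only $|S|$ candidate primes $\mathfrak{p}$, each contributing a single conjugacy class of (pro)cyclic inertia; the Odoni--Stoll device of isolating a fresh prime, unramified through level $n-1$, whose inertia acts as one transposition or $3$-cycle in a single prescribed fibre, is therefore unavailable for all large $n$. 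This is precisely what distinguishes the PCF case from the generic one and why $\mathrm{Gal}(f^{\infty})$ has infinite index in $\mathrm{Aut}(\mathcal{T})$; a strategy whose engine is level-by-level fresh ramification is the standard proof of the \emph{opposite} phenomenon (maximality for non-PCF maps) and cannot be transplanted here. Your fallback --- conjugating one inertia element by lifts of lower-level elements to sweep out all fibres --- only produces the normal closure of a single decoration, and controlling which subgroup of the kernel $V_n\cap S_3^{3^{n-1}}$ this generates is exactly the uncontrolled bookkeeping your last paragraph defers; it is the whole content of the problem, not a final verification, which is why the statement remains a conjecture in the paper.
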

\begin{remark}\label{remark 5.2}
   The maximal group arising from Theorem \ref{Th 5.19} is the same as the one presented in \cite{Bouw2018DynamicalBM}. To prove this remark, we shortly recall how they define the Galois group $E_n$ corresponding to the Belyi map $-2(z+a)^3+3(z+a)^2-a.$
\end{remark}

\begin{definition}
    Let $\mathcal{T}_2$ the $3-$ary regular tree truncated at the second level and let $\sigma=(\sigma_1,\sigma_2,\sigma_3)\sigma_4 \in \mathrm{Aut}(\mathcal{T}_2).$ Then $\mathrm{sgn}(\sigma)=\prod \mathrm{sgn}(\sigma_i),$ where $\mathrm{sgn}(\sigma_i)$ is the usual sign in $S_3.$
\end{definition}
\noindent
The definition extend to $\mathrm{Aut}(\mathcal{T}_n),n>2$ by restricting the action to $\mathrm{Aut}(\mathcal{T}_2).$
\noindent
We define our group $E_n$ as $E_1=S_3$ and $E_n=(E_{n-1}\wr S_3) \cap \mathrm{ker}(\mathrm{sgn}).$
\begin{proof}[Proof of Remark $\ref{remark 5.2}$]
    We first prove, by induction, that our group $M_n$ lies within $E_n$. We check the case $n=2$ on generators. We have the following elements $x_2=(id, id,\sigma)\sigma$ with $\sigma=(0,1,2)$ and $y_2=(id,\beta,\sigma)\beta$ with $\beta=(0,1)$ and hence a computation shows that all those elements live inside the kernel of $sgn,$ and also inside $S_3\wr S_3.$ We suppose it is true until $n-1$ and check it for $n.$ Since $M_n=M_{n-1}$ in $\mathrm{Aut}(\mathcal{T}_{n-1})$ then we get that $M_n$ lives in the kernel of $sgn.$  We also have that $M_n\subset M_{n-1}\wr S_3$ by staring at their expression in $\mathrm{Aut}(\mathcal{T}_{n-1})\wr \mathrm{Aut}(\mathcal{T}_1)$ $x_n=(id,id,x_{n-1})(0,1,2),y_n=(id,y_{n-1},x_{n-1})(0,1),z_n=(y_{n-1},y_{n-1},x_{n-1})$.We prove the other inclusion using the group's size. We use the computation in \cite[Proposition 2.2] {Benedetto2017}and the one in Theorem \ref{size M}, and we conclude that they are indeed the same group.
\end{proof}
\begin{corollary}
    Given $f=-2z^3+3z^2$ and $t$ over $\mathbb{Q}(\sqrt{-3})$ following the conditions of Theorem 1.1 \cite{Benedetto2017}, then $f^n-t$ follows the factorisation of the Markov model defined in Definition \ref{Definition 4.9}.
\end{corollary}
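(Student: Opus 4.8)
The plan is to assemble three results already established in the paper: the group identification of Remark \ref{remark 5.2}, the cycle-data computation of Theorem \ref{Th 5.19}, and the Chebotarev correspondence recalled in Subsection \ref{1.2}. No new computation is required; the content of the corollary is that these three statements fit together for the specific polynomial $f = -2z^3 + 3z^2$.

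First I would record that $f$ is governed by the length-one theory of Section \ref{3}. Its critical points are $\gamma_1 = 0$ and $\gamma_2 = 1$, and since $f(0) = 0$ and $f(1) = 1$ are both fixed, the combined critical orbit is $\{\{0,1\}\}$, which has length one. The hypotheses of Theorem 1.1 of \cite{Benedetto2017} are precisely those under which $\mathrm{Gal}(f^n - t) = E_n$ over $\mathbb{Q}(\sqrt{-3})$, and these are the conditions that place $f - t$ in the maximal case of our model, namely Model 4, for which $(M_{f-t})_n = M_n$. By Remark \ref{remark 5.2}, together with the size computation in Theorem \ref{size M} and \cite[Proposition 2.2]{Benedetto2017}, we have the group equality $M_n = E_n$ inside $\mathrm{Aut}(\mathcal{T}_n)$. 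Hence, under the stated hypotheses, $\mathrm{Gal}(f^n - t) = E_n = M_n$.

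Next, Theorem \ref{Th 5.19} (through Theorem \ref{th 4.24}) shows that the cycle data of $M_n$ coincides with the data obtained by applying the Markov process of Definition \ref{Definition 4.9} to the Model-4 initial datum $n-1$ times. Combining with the identification of the previous paragraph, the cycle data of $\mathrm{Gal}(f^n - t)$ is exactly the Markov data. Finally I would invoke Chebotarev as in Subsection \ref{1.2}, now over $\mathbb{Q}(\sqrt{-3})$: for each partition $\lambda$ of $3^n$, the density of primes $\mathfrak{p}$ for which $f^n - t$ factors with type $\lambda$ modulo $\mathfrak{p}$ equals the proportion of elements of $\mathrm{Gal}(f^n - t)$ with cycle type $\lambda$, and this proportion is the Markov frequency. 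This is precisely the assertion that $f^n - t$ follows the factorization predicted by Definition \ref{Definition 4.9}.

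The step most deserving of care---rather than a genuine difficulty---is checking that the conditions of Theorem 1.1 of \cite{Benedetto2017} really do select Model 4 and not a proper Markov subgroup such as $K_n$, $L_n$ or $\langle K_n, y_n \rangle$; equivalently, that under those conditions $f - t$ is irreducible over $\mathbb{Q}(\sqrt{-3})$ and the Model-4 non-square condition on the combined critical orbit is met, so that the full group $M_n$ is the one realized. Once this compatibility is confirmed, the three cited results chain together immediately to give the corollary.
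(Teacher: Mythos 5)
Your proposal is correct and matches the paper's intended argument: the paper gives no separate proof of this corollary precisely because it is the chain you describe---Remark \ref{remark 5.2} (with Theorem \ref{size M} and \cite[Proposition 2.2]{Benedetto2017}) identifies $M_n = E_n$, Benedetto's Theorem 1.1 gives $\mathrm{Gal}(f^n-t) = E_n$ under the stated hypotheses, and Theorems \ref{th 4.24}/\ref{Th 5.19} plus Chebotarev transfer the cycle data of $M_n$ to factorization frequencies. The compatibility check you flag (that Benedetto's conditions select the maximal Model 4 rather than a proper subgroup) is likewise left implicit in the paper, so your treatment is, if anything, slightly more careful than the source.
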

This has served as a small check on the known cases of the correctness of the model. In the general case, much more is needed, as there are many other polynomials to check. Following Goksel's steps, we think it will be easier to reduce the problem to a group-theoretical one. We recall their conjectures and restate them briefly in our setting. For further explanation and motivation, see \cite{article}.
\begin{definition}
    Let $H,G\leq \mathrm{Aut}(\mathcal{T}_n)$ be two subgroups of $\mathrm{Aut}(\mathcal{T}_n).$ We consider the subgroup \\$\mathrm{stab}_{\mathrm{Aut}(\mathcal{T}_n)}(n-1)$ which is the kernel of the restriction map $\mathrm{Aut}(\mathcal{T}_n)\to \mathrm{Aut}(\mathcal{T}_{n-1}).$ We define that $H$ is \emph{locally conjugated} into $G,$ if for every element $h\in H,\exists k\in \mathrm{stab}_{\mathrm{Aut}(\mathcal{T}_n)}(n-1):h^{k}\in G.$ Respectively we say that $H$ is \emph{globally conjugated} into $G$ if $\exists$ $k\in \mathrm{stab}_{\mathrm{Aut}(\mathcal{T}_n)}(n-1):H^{k}\subseteq G.$   
\end{definition}
\begin{Conjecture} \label{Conjecture 7.6}
    Let $M$ be the group arising from 
 Sections \ref{3},\ref{4}. For any $H\leq \mathrm{Aut}(\mathcal{T}_n)$ that is locally conjugated into $M,$ then it is globally conjugated into $M.$ 
\end{Conjecture}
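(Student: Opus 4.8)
The plan is to adapt Goksel's local-to-global strategy from \cite{goksel2023localglobal} to the cubic setting, proceeding by induction on $n$ and exploiting the recursive description of $M_n$ (either $M_n=\langle x_n,y_n,z_n,L_{n-1}\rangle$ from Section \ref{3} or $M_n=\langle L_n,y_n,l_n\rangle$ from Section \ref{4}). The base cases can be verified directly, as in the computations underlying Theorem \ref{th 4.24} and Theorem \ref{4.44}. The first reduction is to observe that every admissible conjugator $k$ lies in $\mathrm{stab}_{\mathrm{Aut}(\mathcal{T}_n)}(n-1)$ and hence acts trivially on $\mathcal{T}_{n-1}$. Writing $\pi\colon \mathrm{Aut}(\mathcal{T}_n)\to\mathrm{Aut}(\mathcal{T}_{n-1})$ for the restriction map, if $h\in H$ and $h^{k}\in M_n$ then $\pi(h)=\pi(h^{k})\in\pi(M_n)=M_{n-1}$, the last equality holding by construction of $M_n$. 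Thus local conjugacy forces $\pi(H)\subseteq M_{n-1}$ with no conjugation required, and the entire problem is concentrated at the top level $n$.

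It then remains to produce a single $k\in\mathrm{stab}_{\mathrm{Aut}(\mathcal{T}_n)}(n-1)\cong\prod_{v}S_3$, the product taken over the $3^{n-1}$ vertices $v$ of level $n-1$, that simultaneously corrects a generating set of $H$; since $M_n$ is a group, conjugating generators into $M_n$ suffices, so the target is a finite system. I would encode each $h$ by its level-$(n-1)$ action $\bar h=\pi(h)\in M_{n-1}$ together with its local permutations $\tau_v(h)\in S_3$ at the vertices $v$, and compute that conjugation by $k=(\kappa_v)_v$ transforms these by the twisted rule $\tau_v(h^{k})=\kappa_{\bar h(v)}^{-1}\,\tau_v(h)\,\kappa_v$. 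Membership $h^{k}\in M_n$ then becomes, vertex by vertex, the requirement that $\kappa_{\bar h(v)}^{-1}\tau_v(h)\kappa_v$ lie in the coset of $N:=M_n\cap\mathrm{stab}_{\mathrm{Aut}(\mathcal{T}_n)}(n-1)$ prescribed by a chosen lift of $\bar h$ to $M_n$. Here $N$ is the explicit subgroup of $\prod_v S_3$ cut out by the generators of $M_n$, whose structure is governed by the $A_4$ and $V_4$ quotients identified in Theorems \ref{th 4.23}, \ref{th 4.39} and \ref{4.42}, and by the $\mathrm{sgn}$ constraint of Remark \ref{remark 5.2}.

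With this formalism the conjecture reduces to a simultaneous-solvability statement: the system of twisted equations indexed by a generating set of $H$ and by the vertices $v$ admits a common solution $(\kappa_v)_v$ precisely when the hypothesis of local conjugacy holds. I would organize the unknowns along the orbits of $\bar H\curvearrowright\{\text{vertices of level }n-1\}$: fixing a base vertex in each orbit and propagating the relations $\tau_v(h^{k})=\kappa_{\bar h(v)}^{-1}\tau_v(h)\kappa_v$ along a spanning forest of the Schreier graph determines the $\kappa_v$ up to a single free $S_3$-choice per orbit, so that the genuine constraints become consistency conditions around the cycles of that graph. Local conjugacy guarantees a solution for each generator separately; the task is to show these choices can be made coherently across the cycles.

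The main obstacle is precisely this gluing step: because the fibre group $S_3$ is nonabelian, the consistency conditions constitute a nonabelian $H^1$-type obstruction, and local solvability does not formally imply global solvability in general. I expect the resolution to rest on the specific arithmetic of $M$: the quotients $L_n/H_n\cong A_4$ and $K_n/H_n\cong V_4$ of Theorem \ref{th 4.23}, together with the corresponding $V_4$ and $(\mathbb{Z}/2\mathbb{Z})^4$ structures in the length-two case (Theorems \ref{th 4.39} and \ref{4.42}), should force the cocycle around each cycle to land in an abelian quotient on which the obstruction vanishes. Making this precise---identifying the correct abelian invariant and checking that the label constraints defining $M_n$ annihilate it on every $\bar H$-orbit---is the heart of the argument and the direct cubic analogue of the hardest part of \cite{goksel2023localglobal}.
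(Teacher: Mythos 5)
You are attempting to prove a statement that the paper itself leaves as a conjecture: the paper offers no proof of Conjecture \ref{Conjecture 7.6}, notes only that it would imply Conjecture \ref{Conjecture 7.1} via Goksel's argument in \cite{article}, and records that even the quadratic analogue has seen only partial progress in \cite{goksel2023localglobal}. Judged on its own terms, your text is a reduction and a framework, not a proof, and you concede this yourself (``I expect the resolution to rest on\dots'', ``Making this precise\dots is the heart of the argument''). The parts you do carry out are sound: since every admissible conjugator lies in $\mathrm{stab}_{\mathrm{Aut}(\mathcal{T}_n)}(n-1)$, local conjugacy forces $\pi(H)\subseteq M_{n-1}$ without any conjugation (granting $\pi(M_n)=M_{n-1}$, which holds because the generators $x_n,y_n,z_n,l_n,k_n$ are built by the splitting/doubling/tripling maps and hence restrict to $x_{n-1},y_{n-1},\dots$), and recasting the top-level problem as twisted equations $\tau_v(h^{k})=\kappa_{\bar h(v)}^{-1}\tau_v(h)\kappa_v$ over the Schreier graph of $\bar H$ is the standard, correct formalization.

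The genuine gap is exactly the step you defer: showing that the cycle-consistency conditions (the nonabelian $H^1$-type obstruction) vanish. You give no argument for this, and the mechanism you gesture at is unsubstantiated and, as stated, implausible: the fibre group is $S_3$ and the quotient $L_n/H_n\cong A_4$ of Theorem \ref{th 4.23} is itself nonabelian, so there is no visible abelian quotient ``on which the obstruction vanishes''; nothing in Theorems \ref{th 4.39} or \ref{4.42} produces one either. Two further inaccuracies compound this. First, membership $h^{k}\in M_n$ is a coset condition modulo $N=M_n\cap\mathrm{stab}_{\mathrm{Aut}(\mathcal{T}_n)}(n-1)$, and $N$ is \emph{not} a direct product over the level-$(n-1)$ vertices (its projections are entangled, e.g.\ by the sign condition of Remark \ref{remark 5.2}), so your equations are not vertex-local and the bookkeeping ``one free $S_3$-choice per orbit'' is wrong in general. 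Second, the prescribed coset of $N$ depends on a choice of lift of $\bar h$ to $M_n$, and different lifts change the system; this ambiguity must be handled, not fixed silently. In short: the setup is a reasonable cubic transcription of Goksel's strategy, but the conjecture is exactly as open after your argument as before it.
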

Goksel shows in \cite{article} how Conjecture \ref{Conjecture 7.6} implies Conjecture \ref{Conjecture 7.1}. The hope to prove Conjecture \ref{Conjecture 7.6} relies on the rigidity of the automorphism of the tree structure. Some additional assumptions may be useful; for example, if we know that all iterates of our polynomial are irreducible, we may assume that the subgroup $H$ contains the adding machine, which has been proved to simplify the case of quadratic polynomials.  
\begin{remark}
    It is known from the articles \cite{Benedetto2017},\cite{Bouw2018DynamicalBM} that the Galois group of the polynomial $-2z^3+3z^2+t$ $G,$ satisfies $\lim_{n\to \infty} |G_{n,\mathrm{fix}}|/|G_n|\to 0,$ where $G_{n,fix}$ are the elements of the group that fix a leaf of the tree. This allows us to deduce results on prime density in the sequences defined by this polynomial. The same has been deduced as part of my bachelor's thesis for the group arising from Model $5$ in Definition \ref{Th 6.9}. 
\end{remark}
\begin{remark}
   In the case of combined critical orbit of length two, some polynomials may be bounded by better groups, as not all possible descendants occur. This means that our model in Definition \ref{Definition 4.9} may be improved in those cases.
\end{remark}

\bibliographystyle{plain} 
\bibliography{bib} 
\end{document}